\theoremstyle{thmstyleone}%
\newtheorem{thm}{Theorem}[section]
\newtheorem{lem}[thm]{Lemma}
\newtheorem{prop}[thm]{Proposition}
\newtheorem{defi}[thm]{Definition}
\newtheorem{cor}[thm]{Corollary}
\theoremstyle{thmstyletwo}%
\newtheorem{remark}[thm]{Remark}%
\begin{document}

\title[The Stable Picard Group of $A(n)$]{The Stable Picard Group of $A(n)$}

%%=============================================================%%
%% Prefix	-> \pfx{Dr}
%% GivenName	-> \fnm{Joergen W.}
%% Particle	-> \spfx{van der} -> surname prefix
%% FamilyName	-> \sur{Ploeg}
%% Suffix	-> \sfx{IV}
%% NatureName	-> \tanm{Poet Laureate} -> Title after name
%% Degrees	-> \dgr{MSc, PhD}
%% \author*[1,2]{\pfx{Dr} \fnm{Joergen W.} \spfx{van der} \sur{Ploeg} \sfx{IV} \tanm{Poet Laureate} 
%%                 \dgr{MSc, PhD}}\email{iauthor@gmail.com}
%%=============================================================%%

\author[1]{\fnm{JianZhong} \sur{Pan}}\email{pjz@amss.ac.cn}
\equalcont{These authors contributed equally to this work.}
\author*[2]{\fnm{RuJia} \sur{Yan}}\email{yanrujia@amss.ac.cn}
\equalcont{These authors contributed equally to this work.}

\affil[1]{\orgdiv{HUA Loo-Keng Key Laboratory of Mathematics, Academy of Mathematics and Systems Science}, \orgname{Chinese Academy of Sciences}, \orgaddress{\street{No.55, Zhongguancun East Road}, \city{Beijing}, \postcode{100190}, \country{China}}}
\affil*[2]{\orgdiv{Academy of Mathematics and Systems Science}, \orgname{Chinese Academy of Sciences}, \orgaddress{\street{No.55, Zhongguancun East Road}, \city{Beijing}, \postcode{100190}, \country{China}}}

%%==================================%%
%% sample for unstructured abstract %%
%%==================================%%

\abstract{ In this paper, we showed that the Stable Picard group of $A(n)$ for $n\geq 2$ is $\mathbb{Z}\oplus \mathbb{Z}$ by considering the endotrivial modules over $A(n)$. The proof relies on reductions from a Hopf algebra to its proper Hopf subalgebras.}

\keywords{Picard group, Hopf algebra, Steenrod algebra, Endotrivial module}

%%\pacs[JEL Classification]{D8, H51}

%%\pacs[MSC Classification]{35A01, 65L10, 65L12, 65L20, 65L70}

\maketitle

\section{Introduction}\label{sec1}

Let $H$ be a cocommutative connected finite graded Hopf algebra over a base field $k$ with characteristic $p$. The Picard group of $Stab(H)$, denoted by $Pic(H)$ is the group of graded stably $\otimes$-invertible $H$-modules,
$$Pic(A):=\{M\in Stab(H):\exists N\text{ such that } M\otimes N=\underline{k}\},$$
where $\underline{k}$ is the unit of the symmetric monoidal category $(Stab(A),\otimes, \underline{k})$. In this paper, we are interested in the determination of the Picard group of $A(n)$, the Hopf subalgebra of $\mathcal{A}_2$ generated by $Sq^1,Sq^2,\cdots,Sq^{2^n}$ (see section \ref{Steenrod algebra}). The situation when $n=1$ was calculated by Adams and Priddy in \cite{BSO} to show the uniqueness of $BSO$ in 1976. It says that $$Pic(A(1))=\mathbb{Z}\oplus\mathbb{Z}\oplus \mathbb{Z}/2.$$
The $\mathbb{C}$-motivic version of the above result was given by Gheorghe, Isaksen and Ricka in \cite{gheorghe2018picard}, which is
$$Pic(A_{\mathbb{C}}(1))=\mathbb{Z}\oplus\mathbb{Z}\oplus\mathbb{Z}\oplus\mathbb{Z} .$$
In 2017, Bhattacharya and Ricka  determined in \cite{bhattacharya2017stable} that 
$$Pic(A(2))=\mathbb{Z}\oplus\mathbb{Z}.$$
The authors of \cite{bhattacharya2017stable} conjectured that $$Pic(A(n))=\mathbb{Z}\oplus\mathbb{Z}$$ 
for all $n>2$, which is exactly the main result of this paper (section \ref{A(n)}). 

The ideal is to consider $\mathbb{T}(H)$, the group of endotrivial modules over $H$ (See section \ref{defi endotrivial}).  An $H$-module is called endotrivial if $End_k(M)=k\oplus(free)$. By definition if $M$ is endotrivial, then $M$ is invertible since $End_k(M)=M^*\otimes M$. On the other hand, since $H$ is cocommutative, the stable module category $Stab(H)$  is a stable homotopy category (section  9.6 of \cite{hovey1997axiomatic}),  and hence a closed symmetric monoidal category. Therefore Theorem A.2.8 of \cite{margolis2011spectra}  guarantees that if $M$ is invertible, then it is endotrivial. That is, the group  of endotrivial modules of $H$  is isomorphic to the stable Picard group of  $H$. 

The word $endotrivial$ comes from the representation theory of finite groups, since the group algebra $kG$ of a finite $p$-group $G$ is a well-defined Hopf algebra with diagonal coproduct. The group of endotrivial modules over $kG$ has been completely figured out by joint efforts of many mathematicians including Puig, Alperin, Dade, Carlson and Thevenaz, check \cite{carlson2000torsion} for references.

 The main object of this paper is to give a proof of the following main result:
\begin{thm}
[Corollary \ref{main result}] \label{what we want}
	Given $n\geq 2$, the morphism of groups
		$$f:\mathbb{Z}\oplus \mathbb{Z}\to Pic(A(n))$$
	which sends $(m,l)$ to  $[\sigma(m)\Omega_{A(n)}^l(\mathbb{Z}/2)]$ is an isomorphism of groups.
\end{thm}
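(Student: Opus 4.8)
The plan is to use the identification $Pic(A(n))\cong\mathbb{T}(A(n))$ recalled in the introduction and then to establish injectivity and surjectivity of $f$ separately; surjectivity I would prove by induction on $n$, taking as base case $n=2$ the theorem of \cite{bhattacharya2017stable}. Throughout I would use the classical fact (Milnor--Moore) that a connected graded Hopf algebra is free as a module over any sub-Hopf-algebra, so that restriction along a sub-Hopf-algebra is exact, symmetric monoidal, preserves free modules, and commutes with $\Omega$ up to free summands; in particular each inclusion $A(i)\hookrightarrow A(n)$ induces a group homomorphism $Pic(A(n))\to Pic(A(i))$.

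For injectivity, I would restrict along $A(1)\hookrightarrow A(n)$. Since $\Omega_{A(n)}(\mathbb{Z}/2)$ is the augmentation ideal of $A(n)$, its restriction is a first syzygy of $\mathbb{Z}/2$ over $A(1)$, hence agrees with $\Omega_{A(1)}(\mathbb{Z}/2)$ in $Stab(A(1))$; more generally the restriction of $\Omega_{A(n)}^{l}(\mathbb{Z}/2)$ is $\Omega_{A(1)}^{l}(\mathbb{Z}/2)$. Therefore the composite of $f$ with restriction sends $(m,l)$ to $[\sigma(m)\Omega_{A(1)}^{l}(\mathbb{Z}/2)]$. Adams and Priddy \cite{BSO} computed $Pic(A(1))=\mathbb{Z}\oplus\mathbb{Z}\oplus\mathbb{Z}/2$ with torsion-free part generated by $\sigma(1)$ and $\Omega_{A(1)}(\mathbb{Z}/2)$, and these two classes are $\mathbb{Z}$-independent (a syzygy of $\mathbb{Z}/2$ over $A(1)$ is never stably a suspension of $\mathbb{Z}/2$). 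Hence $f$, followed by restriction to $A(1)$ and projection onto the torsion-free part, is the identity, so $f$ is a split monomorphism.

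For surjectivity, let $M$ be endotrivial over $A(n)$ with $n\geq3$. Its restriction to $A(n-1)$ is endotrivial, so by the inductive hypothesis $M|_{A(n-1)}\simeq\sigma(m)\Omega_{A(n-1)}^{l}(\mathbb{Z}/2)$ in $Stab(A(n-1))$ for a unique $(m,l)$. Replacing $M$ by $M\otimes\sigma(-m)\Omega_{A(n)}^{-l}(\mathbb{Z}/2)$ changes its class only by an element of the image of $f$, so I may assume $M|_{A(n-1)}\simeq\underline{\mathbb{Z}/2}$, that is $M\cong\mathbb{Z}/2\oplus(\text{free})$ as an $A(n-1)$-module. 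The problem is thereby reduced to showing that such an $M$ is stably trivial over $A(n)$, i.e.\ that the restriction homomorphism $Pic(A(n))\to Pic(A(n-1))$ is injective.

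To carry out this reduction I would analyse the directions of $A(n)$ transverse to $A(n-1)$. Concretely: the square-zero elements $e\in A(n)$ not lying in $A(n-1)$---the new Milnor primitive $Q_n$ together with the relevant Milnor basis elements $P^{s}_{t}$---together with those of $A(n-1)$, detect freeness over $A(n)$ by the Adams--Margolis criterion. Invertibility of $M$ constrains the corresponding Margolis homology groups $H(M;e)$ to be small (one-dimensional in the primitive cases, by the K\"unneth formula applied to $M\otimes M^{*}\cong\mathbb{Z}/2\oplus(\text{free})$), while triviality of $M$ over $A(n-1)$ concentrates those $H(M;e)$ with $e\in A(n-1)$ in internal degree $0$. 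The heart of the argument---and the step I expect to be the main obstacle---is to pin the remaining groups $H(M;e)$ in degree $0$ as well: for this I would restrict $M$ along the small sub-Hopf-algebras containing these $e$, equivalently study the quotient Hopf algebra $A(n)/\!/A(n-1)$ (exterior on $n+1$ generators), and exploit that for $n\geq2$ the subalgebra $A(n-1)\supseteq A(1)$ is large enough that no ``joker-type'' exotic invertible module can arise---the joker being precisely what produces the extra $\mathbb{Z}/2$ in $Pic(A(1))$ and having no analogue once $n\geq2$. Once all Margolis homologies of $M$ coincide with those of $\mathbb{Z}/2$, the Adams--Margolis detection theorem applied to the invertible module $M$ yields $M\simeq\underline{\mathbb{Z}/2}$, completing the induction. (An alternative to the Margolis-homology bookkeeping would be to set up a Mayer--Vietoris/descent sequence for $Pic$ along the lattice of sub-Hopf-algebras of $A(n)$ and evaluate it inductively; the obstacle is the same.)
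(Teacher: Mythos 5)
Your reduction to $\mathbb{T}(A(n))$ and your injectivity argument are fine: restricting along $A(1)\hookrightarrow A(n)$ (legitimate since $A(n)$ is free over $A(1)$, so restriction sends $\Omega_{A(n)}^{l}(\mathbb{Z}/2)$ to $\Omega_{A(1)}^{l}(\mathbb{Z}/2)$ up to free summands) and quoting Adams--Priddy does show $f$ is a split monomorphism. This is a genuinely different and somewhat slicker route to injectivity than the paper's, which obtains injectivity and surjectivity simultaneously from a single embedding $\mathbb{T}(A(n))\hookrightarrow\mathbb{T}(O_{a+1})\cong\mathbb{Z}\oplus\mathbb{Z}$.

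The problem is surjectivity, which is the entire substance of the theorem, and there your proposal has a genuine gap rather than a proof. After the inductive reduction you must show that an endotrivial $A(n)$-module $M$ that is stably trivial over $A(n-1)$ is stably trivial over $A(n)$, i.e.\ that $\mathbb{T}(A(n))\to\mathbb{T}(A(n-1))$ is injective --- and you explicitly flag the key step (``pinning the remaining $H(M;e)$ in degree $0$'') as an obstacle you expect rather than one you overcome. Worse, the tool you propose for closing it does not suffice: for $n\geq 2$ freeness of an $A(n)$-module is \emph{not} detected by the Margolis homologies of the square-zero elements $P^s_t$ alone, so ``all Margolis homologies of $M$ coincide with those of $\mathbb{Z}/2$'' does not yield $M\simeq\underline{\mathbb{Z}/2}$. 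The correct detection statement is Palmieri's theorem (freeness is detected on the maximal \emph{elementary} sub-Hopf-algebras $B_i\cap A(n)$), and even after restricting to an elementary subalgebra one needs the full classification of endotrivial modules there (the Dade-type result quoted as Proposition \ref{end of induction}), not just the size of the Margolis homology groups. Finally, your remark that ``no joker-type exotic invertible module can arise once $n\geq2$'' is precisely the assertion to be proved, so invoking it is circular. The paper fills exactly this hole with its hard technical core: the transfer-of-detection theorem (Theorem \ref{3.2 steenrod re}) for a normal Hopf subalgebra $Z$ with $H/\!/Z$ having the detect property, applied along the explicit chain $A(n)\to B_i'(n)\to D_i(n)\to(\text{elementary})$ via Propositions \ref{step 0}--\ref{step 3}, each of which requires the delicate degree and top-class computations of Lemma \ref{degree comparison}. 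None of that, or a workable substitute, is present in your sketch.
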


This article is organized as follows. Section 2 introduces some preliminaries and conventions. In section \ref{main}, we showed that the problem of classifying endotrivial $H$-modules can somehow be reduced to the one about each $E$-modules for certain Hopf subalgebras $E$ (See Theorem \ref{3.2 steenrod re}). With help of this reduction theorem, the Picard group of $A(n)$ was given in section \ref{A(n)}.

Throughout this paper, $k$ will denote a field with positive character $p$.
Every algebraic structure is implicitly over the base field $k$, and tensor products are
taken over $k$. The Hopf algebras under consideration in this paper are cocommutative,  unless explicitly specified otherwise.

\section{Preliminaries}

A Hopf algebra is a bialgebra over field $k$ (of characteristic $p>0$) with a compatible antipode map $S$ (\cite{montgomery1993hopf}). We will adapt the following notation: If $H$ is a Hopf algebra, denote $\triangledown:H\otimes H\to H$ as its product; $\triangle:H\to H\otimes H$ as its coproduct; $\epsilon:H\to k$ and $\eta:k\to H$ are counit and unit respectively.
	
Suppose $M,N$ are $H$-modules, then $M\otimes N$ is an $H$-module defined by $h(m\otimes n)=\sum_i a_im\otimes b_in$, where $h\in H$, $m\in M$ and $n\in N$ with $\triangle (h)=\sum_i a_i\otimes b_i$.  For an $H$-module $M$, the dual $M^*$ is defined  by $M^*=Hom_k(H,k)$, with the natural $H$-module structure 
$$(h\cdot f)(v)=f(S(h)v) \quad \forall h,v\in H;\ f\in M^*$$

\subsection{Finite connected graded Hopf algebra}

If the underlying algebra of $H$ is decomposed into a direct sum of vector spaces:
$$H=\bigoplus_{n=0}^{\infty}H_n=H_0\oplus H_1 \oplus H_2\oplus \cdots$$
such that $H_iH_j\subset H_{i+j}$ for all nonnegative $i$ and $j$, we say that $H$ is a graded algebra.
		
		Moreover, a graded $H$-module $M$ is an $H$-module which has a decomposition as direct sum of vector spaces:
		$$M=\bigoplus_{-\infty}^{\infty}M_n$$
		such that $H_i\cdot M_j\subset M_{i+j}$. When $M$, $N$ are graded $H$-module, both $M\otimes N$ and $M^*$ defined above are still graded modules. In the rest of the paper, all algebras and modules are assumed to be graded.
		
		An algebra (or module) is finite if it is a  finite dimensional vector space over the base field.
		
		Let $grH$ be the category with objects finitely generated graded  $H$-modules, and morphisms graded  $H$-homomorphisms. The $i$ th shift functor, $\sigma(i):grH\to grH$, is defined by $\sigma(i)X=\sigma(i)(\oplus X_n)=Y=\oplus Y_n$, where $Y_n=X_{n-i}$, and the obvious way on morphisms.\bigskip

 Suppose $H$ is a finite graded Hopf algebra over a field $k$, then there is a minimal $n\in \mathbb{Z}^+$ such that $H_j=0$ if $j>n$, and we say that the top degree of $H$ is $n$ and denote as $\vert H\vert =n$. 
 
 The trivial $H$-module $k$  can be viewed as a graded $H$-module $\underline{k}$ where $\underline{k}_0=k, \underline{k}_j=0$ for $j\neq 0$. We still use $k$ to denote $\underline{k}$ if there is no confusion.
 
 Recall that a graded algebra $\Lambda$ is connected if $\Lambda_0=k$, and we have  
  \begin{defi}
 	A finite graded connected  algebra $\Lambda$ is a Poincare algebra if there is a map of graded $k$-modules $e:\Lambda\to \sigma(n)k$ for some $n$ such that the pairing $\Lambda_q\otimes \Lambda_{n-q}\xrightarrow{\triangledown}\Lambda_n\xrightarrow{e}k$ is non-singular. 
 \end{defi}
 In particular, this implies that if $\Lambda$ is a Poincare algebra, then $\Lambda_i=0$ for $i>n$  and $\Lambda_n\cong k$.
\begin{thm}[Theorem 12.2.5 \& 12.2.9 of \cite{margolis2011spectra}]
	If $H$ is a finite graded connected Hopf algebra, then $H$ is a Poincare algebra.
\end{thm}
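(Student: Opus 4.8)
The plan is to verify the two conditions that the definition of a Poincar\'e algebra demands of $H$ with top degree $n=\vert H\vert$: that $H_n$ is one-dimensional, and that the multiplication pairing $H_q\otimes H_{n-q}\to H_n$ is non-singular for every $q$. Throughout I use that, $H$ being connected, the counit $\epsilon$ is the projection onto $H_0=k$ that annihilates all positive degrees.

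First I would pin down the top class. For any $t\in H_n$ and any homogeneous $h$ of positive degree one has $ht\in H_{\deg h+n}=0$ and $\epsilon(h)=0$, while $1\cdot t=t=\epsilon(1)t$; by linearity this shows that every element of $H_n$ is a left integral, i.e. $H_n\subseteq\int^{\ell}_H$. Since $H$ is a finite-dimensional Hopf algebra, its space of left integrals is one-dimensional (Larson--Sweedler), so $\dim_k H_n\le 1$; as $n$ is the top degree, $H_n\ne 0$, whence $\dim_k H_n=1$. I then fix an isomorphism $H_n\cong k$ and let $e:H\to\sigma(n)k$ be the resulting degree-$0$ map projecting onto $H_n$, whose kernel is exactly $\bigoplus_{j<n}H_j$.

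Next I would reduce non-singularity of all the pairings to a single non-degeneracy statement. Consider the bilinear form $B(a,b)=e(ab)$ on $H$. Because $e$ is supported in degree $n$ and $\nabla$ preserves degree, for homogeneous components $e(a_ib_j)=0$ unless $i+j=n$; thus $B$ is the orthogonal direct sum $\bigoplus_q B_q$ of the pairings $B_q:H_q\times H_{n-q}\to k$, $B_q(x,y)=e(xy)$, which are precisely the pairings $\Lambda_q\otimes\Lambda_{n-q}\xrightarrow{\nabla}\Lambda_n\xrightarrow{e}k$ of the definition. Hence $B$ is non-degenerate if and only if every $B_q$ is non-singular, and the theorem reduces to showing that $e$ is a Frobenius functional, equivalently that $\ker e=\bigoplus_{j<n}H_j$ contains no nonzero one-sided ideal of $H$.

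The decisive input is the Hopf structure, entering through the theorem of Larson--Sweedler that a finite-dimensional Hopf algebra is a Frobenius algebra; concretely, a nonzero left integral $\lambda\in H^*$ of the dual Hopf algebra is a Frobenius functional, the map $h\mapsto(\lambda\leftharpoonup h)$ being an isomorphism $H\xrightarrow{\cong}H^*$. I would identify the top projection $e$ as exactly such an integral: the left-integral condition $\sum h_{(1)}e(h_{(2)})=e(h)\,1_H$ is checked degree by degree, the only contribution coming from the $H_0\otimes H_n$ component of $\Delta$, which the counit axiom forces to equal $1\otimes t$ on the top class $t$. With $e$ recognized as a left integral of $H^*$, Larson--Sweedler gives that $B(a,b)=e(ab)$ is non-degenerate, and the splitting above upgrades this to non-singularity of each $B_q$, completing the proof. \textbf{The main obstacle} is precisely this structural input: that $\ker e$ swallows no one-sided ideal is false for general graded connected algebras (e.g. $k[x,y]/(x^2,xy,y^3)$ with $x,y$ in degree $1$, whose top degree is one-dimensional yet whose degree-$1$ pairing is degenerate, $x$ lying in its radical), so the argument cannot avoid the antipode and the fundamental theorem of Hopf modules underlying the Larson--Sweedler result. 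In the setting of this paper, where $k$ is perfect and $H$ is cocommutative, one may instead invoke Borel's structure theorem to write $H$ as a tensor product of monogenic algebras $k[x]/(x^{p^s})$ and $\Lambda(x)$, each of which is visibly Poincar\'e, together with the fact that a tensor product of Poincar\'e algebras is Poincar\'e via a K\"unneth argument.
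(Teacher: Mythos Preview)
The paper does not prove this statement at all; it is quoted verbatim as Theorems 12.2.5 and 12.2.9 of Margolis's book and used as a black box. There is therefore no ``paper's own proof'' to compare against.

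Your argument is correct. The identification $H_n\subseteq\int^\ell_H$ is immediate from connectedness, and Larson--Sweedler then pins $\dim H_n=1$. The verification that the top projection $e$ is a left integral of $H^*$ is clean: for $h$ homogeneous of degree $j<n$ every $h_{(2)}$ has degree $<n$ so both sides vanish, while for $j=n$ the counit axiom forces the $H_0\otimes H_n$ component of $\Delta(h)$ to be $1\otimes h$, and $e$ kills all other components. Larson--Sweedler then gives non-degeneracy of $(a,b)\mapsto e(ab)$, and your orthogonal splitting by degree upgrades this to non-singularity of each $B_q$. The counterexample $k[x,y]/(x^2,xy,y^3)$ is apt and correctly locates where the Hopf structure enters.

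For context, Margolis's own treatment in Chapter 12 stays inside the graded-connected world and is closer in spirit to your second route via the Borel/Milnor--Moore structure theorem, decomposing $H$ (after suitable hypotheses) into monogenic tensor factors that are visibly Poincar\'e. Your Larson--Sweedler approach is shorter if one imports that machinery wholesale, and has the advantage of not needing cocommutativity or perfectness of $k$; the structure-theorem route is more self-contained within the graded category the paper actually works in. Either is acceptable here.
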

 So if $H$ is a finite graded connected Hopf algebra and $\vert H\vert =n$, $H$ is a Poincare algebra and thus $H\cong \sigma(n) H^*$. Namely, $H_0\cong H_n\cong k$.
 \begin{thm}[Proposition 12.2.8 of \cite{margolis2011spectra}]\label{proj=free}
 	Let $\Lambda$ be a Poincare algebra and let $M$ be a $\Lambda$-module. Then the followings are equivalent:
 	
 	(1) $M$ is free
 	
 	(2) $M$ is projective
 	
 	(3) $M$ is flat 
 	
 	(4) $M$ is injective.
 \end{thm}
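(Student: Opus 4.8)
The plan is to exploit two structural features of a Poincar\'e algebra $\Lambda$: it is graded-local, because it is finite and connected, and it is self-dual, because the non-singular pairing makes it a graded Frobenius algebra. The cheap implications $(1)\Rightarrow(2)\Rightarrow(3)$ are formal (a free module is projective, and a projective module is flat), so the real work lies in closing the cycle back to $(1)$ and in relating $(4)$ to the rest. I would organize everything around the augmentation ideal $\bar\Lambda=\bigoplus_{i\geq 1}\Lambda_i$, which by connectedness ($\Lambda_0=k$) is the unique maximal graded ideal with $\Lambda/\bar\Lambda\cong k$, and around the self-duality isomorphism supplied by the Poincar\'e pairing.

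First I would record the locality input. Since $\Lambda$ is graded with $\Lambda_i=0$ for $i$ large and $\Lambda_0=k$, the graded Nakayama lemma holds: a finitely generated graded module $M$ with $\bar\Lambda M=M$ is zero. From this, $(3)\Rightarrow(1)$ follows by the usual minimal-resolution argument: choosing homogeneous generators of $M$ lifting a $k$-basis of $M/\bar\Lambda M$ yields a surjection $F\to M$ from a free module with kernel $K\subseteq\bar\Lambda F$, and one identifies $\mathrm{Tor}_1^\Lambda(M,k)\cong K\otimes_\Lambda k$; flatness forces this $\mathrm{Tor}$ to vanish, whence $K\otimes_\Lambda k=0$ and $K=0$ by Nakayama, so $M\cong F$ is free. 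Since a projective module is flat, $(2)\Rightarrow(3)\Rightarrow(1)$, and hence $(1)$, $(2)$, $(3)$ are equivalent.

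Next I would treat injectivity via Frobenius self-duality. The non-singular pairing $\Lambda_q\otimes\Lambda_{n-q}\xrightarrow{\triangledown}\Lambda_n\xrightarrow{e}k$ identifies $\Lambda_q$ with $(\Lambda_{n-q})^*$ for each $q$, and one checks this assembles into an isomorphism $\Lambda\cong\sigma(n)\Lambda^*$ of graded $\Lambda$-modules, where $\Lambda^*=\mathrm{Hom}_k(\Lambda,k)$ carries its natural module structure and $n=\vert\Lambda\vert$. Because $k$ is a field, the adjunction $\mathrm{Hom}_\Lambda(-,\Lambda^*)\cong\mathrm{Hom}_k(-,k)$ shows $\mathrm{Hom}_\Lambda(-,\Lambda^*)$ is exact, so $\Lambda^*$, and hence $\Lambda$, is injective; a finitely generated free module is a finite sum of shifts of $\Lambda$ and is therefore injective, giving $(1)\Rightarrow(4)$. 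For $(4)\Rightarrow(1)$ I would use that $(-)^*=\mathrm{Hom}_k(-,k)$ is an exact contravariant duality between finitely generated left and right $\Lambda$-modules with $(-)^{**}\cong\mathrm{id}$, so it exchanges projectives and injectives: if $M$ is injective then $M^*$ is a projective, hence (by the locality step applied on the right, using that $\bar\Lambda$ is two-sided) free, right module, and dualizing back exhibits $M\cong M^{**}$ as a sum of copies of $\sigma(n)\Lambda$, i.e. free. This closes all four equivalences.

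The main obstacle I anticipate is making the self-duality isomorphism $\Lambda\cong\sigma(n)\Lambda^*$ precise as graded $\Lambda$-\emph{modules} rather than merely as graded vector spaces: one must verify that the map $x\mapsto(y\mapsto e(xy))$ is $\Lambda$-linear for the correct side, track the degree shift by $n=\vert\Lambda\vert$, and confirm that non-singularity in one slot of the pairing yields it in the other (which holds here since $\dim_k\Lambda_q=\dim_k\Lambda_{n-q}$). Once this isomorphism and the two-sided graded Nakayama lemma are in hand, the remaining implications are the standard homological bookkeeping sketched above.
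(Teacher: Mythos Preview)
The paper does not supply its own proof of this statement; it is quoted verbatim from Margolis's book (Proposition~12.2.8) and used as a black box, so there is no in-paper argument to compare against. Your approach is the standard one and is correct in outline: graded-locality of a finite connected algebra gives $(1)\Leftrightarrow(2)\Leftrightarrow(3)$ via the graded Nakayama lemma and a $\mathrm{Tor}_1$ computation, and the Poincar\'e pairing makes $\Lambda$ a graded Frobenius algebra, so $\Lambda\cong\sigma(n)\Lambda^*$ is self-injective and $(1)\Leftrightarrow(4)$ follows by duality. This is essentially how Margolis argues as well.

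One small caveat worth tightening: your $(3)\Rightarrow(1)$ step invokes finite generation, and your $(4)\Rightarrow(1)$ step uses $M\cong M^{**}$, which needs $M$ to be degreewise finite over $k$. The statement as written imposes neither hypothesis. In Margolis's framework modules are graded and bounded below, which already suffices for the graded Nakayama lemma (no finite generation needed: if $M$ is bounded below and $\bar\Lambda M=M$ then the lowest nonzero piece of $M$ would have to lie in $\bar\Lambda M$, a contradiction). For $(4)\Rightarrow(1)$ you can avoid the double-dual entirely: since $\Lambda$ is finite-dimensional it is Noetherian, so by Matlis every injective is a direct sum of indecomposable injectives; for a Frobenius algebra the indecomposable injectives coincide with the indecomposable projectives, hence any injective is a direct sum of projectives and therefore projective, whence free by the locality step. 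These are hypothesis-tracking refinements rather than structural gaps in your plan.
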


 Another fact worth to be known is that a Hopf algebra of our interest is always free over its Hopf subalgebras:

 \begin{thm}[Theorem 1.3 of \cite{aguiar2013lagrange}]\label{graded lagrange rewrite}
	Let $H$ be a finite graded connected Hopf algebra. If $Z\subset H$ is a Hopf subalgebra, then $H$ is a free left (and right) $Z$-module. Moreover,
	$$H\cong Z\otimes (H/Z^+H)$$
	as left $Z$-modules and as graded vector spaces.
\end{thm}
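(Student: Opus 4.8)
The plan is to exhibit an explicit isomorphism of graded left $Z$-modules realizing the decomposition $H\cong Z\otimes (H/Z^+H)$, following the classical Milnor--Moore argument specialized to the connected graded setting. Write $Z^+=\bigoplus_{j\geq 1}Z_j$ for the augmentation ideal of $Z$ and set $C=H/Z^+H$, with quotient map $\pi\colon H\to C$; note that $Z^+H$ is a graded right ideal, so $C$ is a graded quotient and $\pi$ is a map of graded $k$-modules. Since $k$ is a field, I would choose a graded $k$-linear section $s\colon C\to H$ of $\pi$ with $s(1_C)=1$, and define
\[
\phi\colon Z\otimes C\longrightarrow H,\qquad \phi(z\otimes c)=z\cdot s(c).
\]
This is visibly a morphism of graded left $Z$-modules, and the whole theorem reduces to showing $\phi$ is bijective: bijectivity gives freeness (a lift of any homogeneous $k$-basis of $C$ becomes a free $Z$-basis of $H$) together with the claimed isomorphism, as graded left $Z$-modules and a fortiori as graded vector spaces.

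For surjectivity I would induct on degree. In degree $0$ we have $H_0=k\cdot 1$ and $\phi(1\otimes 1_C)=1$. Assuming $\bigoplus_{m<n}H_m\subseteq\operatorname{im}\phi$, take a homogeneous $h\in H_n$. Then $h-s(\pi(h))\in\ker\pi=Z^+H$, so it is a finite sum $\sum_i z_ih_i$ with $z_i\in Z^+$ homogeneous of positive degree and $h_i$ homogeneous of degree $n-\deg z_i<n$. By the inductive hypothesis each $h_i$ lies in $\operatorname{im}\phi$, and since $\operatorname{im}\phi$ is a left $Z$-submodule, so does each $z_ih_i$; as $s(\pi(h))=\phi(1\otimes\pi(h))$ is also in the image, we conclude $h\in\operatorname{im}\phi$.

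The crux is injectivity, and this is where the coalgebra structure and the grading are essential. Fix a homogeneous $k$-basis $\{c_\alpha\}$ of $C$ with $\deg c_\alpha=d_\alpha$, put $x_\alpha=s(c_\alpha)$, and suppose $\sum_\alpha z_\alpha x_\alpha=0$ with $z_\alpha\in Z$ homogeneous and not all zero. The idea is to apply the graded map $(\mathrm{id}\otimes\pi)\circ\triangle\colon H\to H\otimes C$. Using that $\triangle(Z)\subseteq Z\otimes Z$ and that $\pi$ annihilates $Z^+H$, the counit identity collapses the $Z$-coproduct and yields $(\mathrm{id}\otimes\pi)\triangle(z_\alpha x_\alpha)=\sum z_\alpha x_{\alpha(1)}\otimes\pi(x_{\alpha(2)})$, in Sweedler notation for $\triangle(x_\alpha)$. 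Because $\triangle$ is graded and $H$ is connected, the summand whose second tensor factor has top degree $d_\alpha$ is exactly $z_\alpha\otimes c_\alpha$, while every other summand has second factor of degree strictly less than $d_\alpha$. Now set $d=\max\{d_\alpha:z_\alpha\neq 0\}$ and project onto the piece $H\otimes C_d$: the leading terms contribute $\sum_{d_\alpha=d}z_\alpha\otimes c_\alpha$, whereas all correction terms (coming from $\alpha$ with $z_\alpha\neq 0$, hence $d_\alpha\leq d$) have second factor of degree $<d$ and drop out. Since $(\mathrm{id}\otimes\pi)\triangle(0)=0$ and the $c_\alpha$ with $d_\alpha=d$ are linearly independent in $C_d$, this forces $z_\alpha=0$ for all $\alpha$ with $d_\alpha=d$, contradicting the choice of $d$. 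Hence $\phi$ is injective.

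Combining the two halves shows $\phi$ is an isomorphism of graded left $Z$-modules, which is the left-module assertion together with the \emph{moreover} clause. For the right-module statement I would run the symmetric argument with the right ideal $HZ^+$, or equivalently transport the left result through the antipode $S$, which is an algebra anti-automorphism preserving the Hopf subalgebra $Z$. The only delicate point throughout is the injectivity step; everything else is a routine graded induction, and the decisive input there is the interaction between the coproduct and the connected grading that isolates a nonzero leading term.
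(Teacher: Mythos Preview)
The paper does not prove this theorem itself; it is quoted as Theorem~1.3 of \cite{aguiar2013lagrange}, with only the remark that the cited argument applies in arbitrary characteristic. There is thus nothing in the paper to compare against. Your proof is correct and is the classical Milnor--Moore argument: the surjectivity induction is routine, and the injectivity step---using connectedness to see that the component of $(\mathrm{id}\otimes\pi)\triangle\bigl(\sum_\alpha z_\alpha x_\alpha\bigr)$ landing in $H\otimes C_d$ is exactly $\sum_{d_\alpha=d}z_\alpha\otimes c_\alpha$---is precisely the right mechanism. The small aside that $Z^+H$ is a right ideal is true but incidental; what you actually use is that it is a graded left $Z$-submodule (so $\phi$ is $Z$-linear) and that $\pi$ kills $Z^+H$ (so the $z_{\alpha(2)}$-terms with positive degree die under $\mathrm{id}\otimes\pi$).
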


\begin{remark}
 The proof of \cite{aguiar2013lagrange} applies for arbitrary characteristic.
\end{remark}

In the rest of this article, all Hopf algebras are assumed to be finite graded connected Hopf algebras.
 
 \subsection{Endotrivial modules} \label{defi endotrivial}

 An $H$-module $M$ is called endotrivial if there is an isomorphism of $H$-modules $End_k(M)\cong \underline{k}\oplus F$, where $F$ is a free $H$-module. As an $H$-module,  $End_k(M)\cong M\otimes M^*$, thus $M$ is endotrivial iff $M\otimes M^*\cong k\oplus F$ for some free module $F$. Note that $M\otimes N$ and $M^*$ are endotrivial if $M$ and $N$ are endotrivial.
 
 	Two endotrivial $H$-modules $M$ and $N$ are said to be equivalent if 
 $$either \quad M\cong N\oplus (free) \quad or \quad N\cong M\oplus (free).$$
 
 Denote the equivalent class of an endotrivial module $M$ as $[M]$, and define $$\mathbb{T}(H)=[M]\vert \text{ M  is an endotrivial  H-module}.$$
 $\mathbb{T}(H)$ is an abelian group with unit $k$, and $[M]+[N]=[M\otimes N]$.
 
 \begin{lem}
 	If an endotrivial $H$-module $M$  decomposes as $M=M_1\oplus M_2$, then one of the summands is free and the other one is endotrivial.
 \end{lem}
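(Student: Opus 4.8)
The plan is to exploit the defining isomorphism $M \otimes M^* \cong \underline{k} \oplus F$ with $F$ free, together with the decomposition $M = M_1 \oplus M_2$. Expanding the tensor product gives
\[
M \otimes M^* \cong (M_1 \otimes M_1^*) \oplus (M_1 \otimes M_2^*) \oplus (M_2 \otimes M_1^*) \oplus (M_2 \otimes M_2^*),
\]
so $\underline{k} \oplus F$ splits as a direct sum of four $H$-modules. The first step is to locate the copy of $\underline{k}$: since $\underline{k}$ is indecomposable and $H$ is connected graded (so that the Krull--Schmidt property holds for finitely generated graded modules over $H$, $H$ being a Poincar\'e algebra), the copy of $\underline{k}$ must sit inside exactly one of the four summands. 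I claim it must lie in a ``diagonal'' summand $M_i \otimes M_i^*$; indeed, the canonical evaluation/coevaluation maps realizing the unit of the duality all factor through the diagonal pieces, so after adjusting by the Krull--Schmidt isomorphism one of $M_1 \otimes M_1^*$ or $M_2 \otimes M_2^*$ contains $\underline{k}$ as a summand, say (without loss of generality) $M_1 \otimes M_1^* \cong \underline{k} \oplus F_1$.

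The second step is to conclude that $M_1$ is endotrivial and $M_2$ is free. That $M_1$ is endotrivial is now immediate from $M_1 \otimes M_1^* \cong \underline{k} \oplus F_1$ once we check $F_1$ is free, which follows because $F_1$ is a summand of the free module $F$ and, over a Poincar\'e algebra, summands of free modules are projective hence free by Theorem \ref{proj=free}. For the remaining three summands $M_1 \otimes M_2^*$, $M_2 \otimes M_1^*$, $M_2 \otimes M_2^*$: each is a summand of $F$, hence free. Now I would argue that $M_2 \otimes M_2^*$ being free forces $M_2$ to be free. The cleanest route is to use the trace/evaluation map $M_2^* \otimes M_2 \to \underline{k}$: if $M_2$ were not free, it would have a nonzero image in the stable category $Stab(H)$, and then the stable self-duality pairing would force a copy of $\underline{k}$ to split off $M_2 \otimes M_2^*$ stably, contradicting freeness of $M_2 \otimes M_2^*$. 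Equivalently, in $Stab(H)$ the object $M_2$ is dualizable with dual $M_2^*$, and a dualizable object whose ``dimension'' $M_2 \otimes M_2^* $ is stably trivial and for which the coevaluation is null must itself be stably trivial, i.e. $M_2$ is free.

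The main obstacle I anticipate is the very first step: pinning down \emph{which} summand contains $\underline{k}$ and justifying that it is forced to be a diagonal term $M_i \otimes M_i^*$ rather than a cross term. This requires being careful about how the Krull--Schmidt isomorphism interacts with the tensor decomposition, and about the fact that $\underline{k}$ as a summand of $M \otimes M^*$ really is detected by the evaluation map $M^* \otimes M \to \underline{k}$ composed with coevaluation $\underline{k} \to M \otimes M^*$ being an isomorphism onto a summand. A clean way to handle this is to pass to the stable category $Stab(H)$ from the outset: there $\underline{k}$ is the unit, $M$ is invertible (as the lemma's hypothesis makes $M$ endotrivial), and an invertible object in a symmetric monoidal category is indecomposable; so if $M \simeq M_1 \oplus M_2$ stably with both summands nonzero we get a contradiction with invertibility unless one summand is stably zero. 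Then unwinding ``stably zero'' back to ``free'' via Theorem \ref{proj=free} (projective = free over a Poincar\'e algebra) gives the statement. I would present the argument in this stable-categorical form, as it sidesteps the bookkeeping of the four-term expansion while still invoking only the Poincar\'e-algebra facts already established in the excerpt.
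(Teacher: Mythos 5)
Your final, stable-categorical version of the argument is correct, but it takes a genuinely different route from the paper. The paper stays entirely in the module category: it expands $End_k(M)$ into the four pieces $End_k(M_1)\oplus End_k(M_2)\oplus Hom_k(M_1,M_2)\oplus Hom_k(M_2,M_1)$, applies Krull--Schmidt to conclude that exactly one piece $L$ is non-free and satisfies $\dim L\equiv 1 \pmod{\dim H}$, and then rules out the two cross terms by the simple observation that they have equal dimension $\dim M_1\dim M_2$ (if one were $\equiv 1$ mod $\dim H$, so would the other, contradicting uniqueness of the non-free piece). This dimension count is exactly what is missing from your first variant, where the appeal to evaluation/coevaluation ``factoring through the diagonal'' does not by itself control where the Krull--Schmidt copy of $\underline{k}$ sits; as written that variant is incomplete. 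Having located $L=End_k(M_1)$, the paper shows $M_2$ is free by an elementary trick: $End_k(M_2)\otimes M_2$ is free and contains $M_2$ as a direct summand, so Theorem \ref{proj=free} finishes. Your stable route instead argues: $M$ endotrivial implies $M$ invertible in $Stab(H)$; tensoring with $M^{-1}$ is an additive equivalence; the unit $\underline{k}$ is indecomposable in $Stab(H)$ because its stable endomorphism ring is the field $k$, hence local (note that ``invertible implies indecomposable'' is \emph{false} in a general symmetric monoidal category --- it requires indecomposability of the unit, which you should make explicit); hence one of $M_1,M_2$ is stably zero, i.e.\ projective, i.e.\ free by Theorem \ref{proj=free}, and the other is stably isomorphic to $M$, hence endotrivial. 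Your approach buys brevity and avoids the four-term bookkeeping, at the price of invoking the monoidal structure of $Stab(H)$ and the identification ``stably zero $=$ projective $=$ free''; the paper's argument is longer but self-contained at the level of graded modules and dimension counts.
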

 \begin{proof}
   By Theorem \ref{proj=free}, $H$ is indecomposable as an $H$-module.  By definition,	
   $$k\oplus F\cong End_k(M_1)\oplus End_k(M_2)\oplus Hom_k(M_1,M_2)\oplus Hom_k(M_2,M_1).$$ 
  Krull-Schmidt Theorem shows that all the summands on the right hand side are free except one. Denote it as $L$, one has  $dimL=1$ mod $dimH$. On the other hand,  $dim(Hom_k(M_1,M_2))=dim (Hom_k(M_2,M_1))=dimM_1dimM_2$, thus $L\neq Hom_k(M_2,M_1)$ and $L\neq Hom_k(M_1,M_2)$. Without loss of generality, we assume $L=End_k(M_1)=k\oplus (free)$ and $End_k(M_2)=(free)$. Therefore $End_k(M_2)\otimes M_2=M_2\otimes M_2^*\otimes M_2$, is free and has a direct summand $M_2$. By Theorem \ref{proj=free}, $M_2$ is free.
 \end{proof}

 Therefore,	in the equivalence class $[M]$, there is, up to isomorphism, a unique indecomposable module $M_0$ and every module in the class is isomorphic to $M_0\oplus (free)$.
 
 \begin{defi}[Definition 2.15 of \cite{gheorghe2018picard}]
 Consider the projective resolution of $k$:
 $$ker(\epsilon)\to H\to k.$$
 The operator $\Omega_H$ is given by
$$\Omega_H M=ker(\epsilon)\otimes M,$$
where $\epsilon:H\to k$ is the augmentation of $H$. For $m\geq 0$, define $\Omega^m_H M$ inductively to be $\Omega_H(\Omega_H^{m-1}M)$. For $m < 0$, define $\Omega_H^m M$ to be $(\Omega_H^{-m} (M^*))^*$.
 \end{defi}

As showed in section 2.3 of \cite{gheorghe2018picard}, $\Omega_H^m(k)$ is endotrivial and different choices of resolution will give the same equivalence class in $\mathbb{T}(H)$,
so the choice we made is harmless.
 
 When $H=kG$, the group algebra of a finite $p$-group $G$, we know well about endotrivial modules on $kG$ (\cite{carlson2000torsion}).  With  help of this example, we get:
 
 \begin{prop}[Prop3.2 of \cite{bhattacharya2017stable}]\label{end of induction}
 	Given an elementray Hopf algebra $B$ over $k$ generated by $\{x_1,x_2, \cdots, x_t\}$  where $t\geq 2$. Then
 	$f:\mathbb{Z}\oplus \mathbb{Z}\to\mathbb{T}(B)$ with $f(m,n)=[\sigma(m)\Omega_{B}^n(k)]$ is an isomorphism of groups.
 \end{prop}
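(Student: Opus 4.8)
The plan is to reduce the statement to the classical description of endotrivial modules over the group algebra of an elementary abelian $p$-group, and then to keep track of the grading by hand. The starting observation is that, once the grading is forgotten, $B$ is the group algebra $kG$ of an elementary abelian $p$-group $G$ of rank $t$: the primitive generators $x_i$ satisfy $x_i^2=0$, so $g_i:=1+x_i$ identifies $B$ with $k[(\mathbb{Z}/p)^{t}]$ as an ungraded Hopf algebra. Consequently the functor from $grB$ to the category of ungraded $kG$-modules that forgets the grading is exact, symmetric monoidal, and takes free modules to free modules; it therefore descends to an exact symmetric monoidal functor $Stab(B)\to Stab(kG)$ and to a group homomorphism $U\colon\mathbb{T}(B)\to\mathbb{T}(kG)$ with $U[\sigma(m)k]=0$ for every $m$ and with $U[\Omega_B k]=[\Omega_{kG}k]$ (both are the augmentation ideal $\ker(\epsilon)$).

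Next I would quote the theorem of Dade (see \cite{carlson2000torsion} and the references collected there): because $t\geq 2$, the group $G$ is noncyclic, so every endotrivial $kG$-module is isomorphic to $\Omega_{kG}^{n}(k)$ plus a free module for a unique $n\in\mathbb{Z}$; equivalently $\mathbb{T}(kG)\cong\mathbb{Z}$, generated by $[\Omega_{kG}k]$. Together with $U[\Omega_B k]=[\Omega_{kG}k]$ this shows that $U$ is surjective.

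The core of the argument is the computation $\ker U=\{[\sigma(m)k]:m\in\mathbb{Z}\}$. For the nontrivial inclusion, let $M$ represent a class in $\ker U$ and let $M_0$ be the graded indecomposable module in the class $[M]$, given by the preceding lemma, so that $M$ is the direct sum of $M_0$ and a graded free module. Forgetting the grading, $\overline{M_0}$ lies in the trivial class of $\mathbb{T}(kG)$, hence is the direct sum of $k$ with a free $kG$-module. Since $B$ is connected, $M_0$ is still indecomposable after forgetting the grading, so by Krull--Schmidt $\overline{M_0}$ is either $k$ or the regular module $kG$. In the second case $M_0$, and hence $M$, would be free, which contradicts the fact that $\overline{M}$ has a trivial direct summand; therefore $\overline{M_0}\cong k$, and consequently $M_0$ is one-dimensional and concentrated in a single degree $m$, i.e. $M_0\cong\sigma(m)k$. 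The reverse inclusion is obvious, and $[\sigma(m)k]=0$ forces $m=0$ upon comparing dimensions, so $m\mapsto[\sigma(m)k]$ is an injection $\mathbb{Z}\hookrightarrow\mathbb{T}(B)$ with image $\ker U$. Thus $0\to\mathbb{Z}\langle[\sigma(1)k]\rangle\to\mathbb{T}(B)\xrightarrow{U}\mathbb{Z}\langle[\Omega_{kG}k]\rangle\to 0$ is exact and split by $[\Omega_B k]$, so $\mathbb{T}(B)$ is free abelian on $[\sigma(1)k]$ and $[\Omega_B k]$. Since $\Omega_B^{n}(k)=\ker(\epsilon)^{\otimes n}$ and $\sigma(m)(X)\cong\sigma(m)k\otimes X$, one has $[\sigma(m)\Omega_B^{n}(k)]=m[\sigma(1)k]+n[\Omega_B k]$, which is exactly $f(m,n)$; hence $f$ is this isomorphism, as claimed.

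I expect the main obstacle to be the grading bookkeeping in the third step, in particular the fact that a graded indecomposable $B$-module remains indecomposable after the grading is forgotten, so that Dade's ungraded classification really does transfer back to $grB$. This is a standard property of finitely generated, bounded-below modules over a connected graded algebra, built into the Krull--Schmidt formalism used throughout \cite{margolis2011spectra}, but it is the one point where connectivity of $B$, rather than merely its Poincar\'e (Frobenius) structure, is genuinely used; everything else is formal manipulation of the functor $U$ together with the appeal to the group-algebra case.
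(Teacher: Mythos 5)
First, a point of comparison: the paper does not actually prove this proposition --- it is imported verbatim from Bhattacharya--Ricka (their Prop.\ 3.2), with only two remarks appended. So your proposal has to be judged against the proof in the cited source. Your overall strategy (split off a $\mathbb{Z}$ coming from the grading shift, reduce the rest to Dade's classification of endotrivial modules over a noncyclic elementary abelian $p$-group) is indeed the strategy of that source, and your analysis of the kernel --- graded Krull--Schmidt, indecomposability surviving the forgetful functor, the dichotomy ``$\overline{M_0}\cong k$ or $\overline{M_0}$ free'' --- is essentially correct. (One small slip: $[\sigma(m)k]=0$ forces $m=0$ by comparing \emph{degrees}, not dimensions; every $\sigma(m)k$ has dimension one.)

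The genuine gap is at the very first step. The elements $g_i=1+x_i$ are units of order dividing $p$ and do give an isomorphism of augmented \emph{algebras} $k[(\mathbb{Z}/2)^t]\to B$ at $p=2$ (for odd $p$ even this can fail: an exterior generator with $x^2=0$ gives $k[x]/(x^2)\not\cong k[\mathbb{Z}/p]$). But the $g_i$ are not grouplike: $\Delta(1+x_i)=(1+x_i)\otimes(1+x_i)$ would force $\Delta x_i=x_i\otimes 1+1\otimes x_i+x_i\otimes x_i$, which is impossible for a homogeneous element of positive degree in a graded Hopf algebra (the term $x_i\otimes x_i$ lives in the wrong degree), and is in any case not the coproduct of the $P^s_t$'s in the elementary subalgebras of $\mathcal{A}_2$ to which the proposition is applied. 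So $B\cong kG$ only as an algebra, not as a Hopf algebra; the same underlying vector space $M\otimes_k N$ carries two different module structures, the forgetful functor is \emph{not} symmetric monoidal, and your homomorphism $U:\mathbb{T}(B)\to\mathbb{T}(kG)$ is not defined by what you have written. Everything downstream (surjectivity of $U$, the exact sequence, the splitting by $[\Omega_B k]$) rests on $U$. This is not cosmetic: both the predicate ``$\mathrm{End}_k(M)\cong k\oplus(\text{free})$'' and the group law $[M]+[N]=[M\otimes N]$ use the coproduct, so transporting Dade's theorem across the algebra isomorphism requires a genuine argument that endotriviality and the classification are insensitive to the choice of cocommutative coproduct on the fixed underlying algebra --- for instance via Dade's lemma / rank varieties (detection of freeness on the subalgebras $k[u_\alpha]$ is a purely algebra-level statement, and is exactly the form in which Palmieri's detection theorem is used elsewhere in this paper), or by rerunning Dade's argument directly in the graded Hopf setting. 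That bridge is precisely the technical content of the cited Prop.\ 3.2, so as written your proof assumes the hard part. You flag the graded Krull--Schmidt point as the main obstacle, but that part is standard; the coproduct mismatch is where the real work lies.
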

\begin{remark}
 A Hopf algebra $H$ over $k$ is called elementary if it is bicommunicative and has $x^p=0$ for all $x\in ker \epsilon$.
\end{remark} 
 \begin{remark}
  The proof of \cite{bhattacharya2017stable} still works for $p>2$.
 \end{remark}
 
 \subsection{Mod 2 Steenrod algebra}\label{Steenrod algebra}
 We now turn  to the algebra of central interest to us, the mod $2$ Steenrod algebra $\mathcal{A}_2$ and its subalgebra $A(n)$. The background knowledge about $\mathcal{A}_2$ in this section  mainly comes from 15.1 of \cite{margolis2011spectra}.\bigskip
 
 The mod $2$ Steenrod algebra $\mathcal{A}_2$ is a graded vector space over $\mathbb{Z}/2$ with basis all formal symbols $Sq(r_1,r_2,\cdots)$, called Milnor basis, where $r_i\geq 0$ and $r_i > 0$ only finitely often. If $r_j=0$ for $j>i$ we will also write $Sq(r_1,\cdots, r_i)$ and $\vert Sq(r_1,r_2,\cdots)\vert$=$\sum(2^i-1)r_i$. 
 The product of $\mathcal{A}_2 $ are defined as follows:
 $$
 Sq(r_1,r_2,\cdots)\cdot Sq(s_1,s_2,\cdots)=\sum_{X}\beta(X)Sq(t_1,t_2,\cdots)
 $$
 the summation being over all matrices 
 $$X=	
 \left[	
 \begin{matrix}	
 	*	    & x_{01} &x_{02}  &\cdots \\	
 	x_{10}	& x_{11} & \cdots & 	   \\
 	x_{20}  & \vdots &        &       \\	
 	\vdots	&  	     &        &  
 	
 \end{matrix}	 
 \right]	
 $$
 with nonnegative $x_{ij}$ satisfying $\sum_{i}x_{ij}=s_j$ and $\sum_{j}2^jx_{ij}=r_i$, and we will call such matrix allowable.
 
 Then take $t_k=\sum_{i+j=k}x_{ij}$ and $\beta(X)=\prod_{k}(x_{k0},x_{(k-1)1},\cdots x_{0k})\in\mathbb{Z}/2$, where $(n_1,\cdots,n_r)=(n_1+\cdots n_r)!/n_1!\cdots n_r!$  reduced mod $2$.
 In particular, there is always an allowable matrix with $x_{i0}=r_i$ and $x_{0j}=s_j$ and all the other entries are zero, we call such allowable matrix trivial.

 The coproduct is defined as follow:
 $$
 \triangle Sq(r_1,r_2,\cdots) = \sum_{r_i=s_i+t_i} Sq(s_1,s_2,\cdots) \otimes Sq(t_1,t_2,\cdots ).
 $$
 
 Here is a useful Lemma for the calculation of $(n_1,\cdots,n_r)$.
 \begin{lem}\label{dyadic}
 	Suppose  the unique dyadic expansion of the natural number $n$ is  $n=\sum_{j}a_j2^j$, then we say  $2^i\in n$ if $a_i=1$; and $2^i\notin n$ if $a_i=0$.\\ 
 	Then 
 	$(n_1,\cdots,n_r)=1$ if and only if $2^i\in n_j$ implies $2^i\notin n_k$ for $k\neq j$. 
 \end{lem}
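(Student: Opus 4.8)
The plan is to deduce this from Legendre's formula for the $2$-adic valuation of a factorial. Write $\nu(-)$ for the $2$-adic valuation and, for a natural number $m$, let $s(m)=\#\{\,i:2^i\in m\,\}$ be the number of ones in its dyadic expansion; Legendre's formula says $\nu(m!)=m-s(m)$. Setting $N=n_1+\cdots+n_r$ and using that $(n_1,\dots,n_r)$ is $N!/(n_1!\cdots n_r!)$ reduced mod $2$, I would compute
$$\nu\big((n_1,\dots,n_r)\big)=\big(N-s(N)\big)-\sum_{i=1}^{r}\big(n_i-s(n_i)\big)=\sum_{i=1}^{r}s(n_i)-s(N).$$
Thus $(n_1,\dots,n_r)=1$ in $\mathbb{Z}/2$ if and only if $s(N)=\sum_{i=1}^{r}s(n_i)$.

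The second step is to identify when this equality of digit sums holds. Performing the addition $n_1+\cdots+n_r$ in base $2$ column by column, whenever the total in a column (the sum of the relevant digits together with the carry coming in) is at least $2$ a carry is produced, and each unit carried decreases $s(N)$ by exactly $1$ compared with $\sum_i s(n_i)$; hence $s(N)\le\sum_i s(n_i)$ always, with equality precisely when no carry ever occurs. No carry occurs in any column exactly when, for every $i$, at most one of $n_1,\dots,n_r$ contains $2^i$ in its dyadic expansion — that is, $2^i\in n_j$ forces $2^i\notin n_k$ for all $k\neq j$ — which is the asserted condition.

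If one prefers to avoid the column-by-column bookkeeping, an equivalent route is to write $(n_1,\dots,n_r)$ as the telescoping product $\prod_{j=2}^{r}\binom{n_1+\cdots+n_j}{n_j}$, apply Kummer's (or Lucas') theorem to each binomial factor — $\binom{a+b}{b}$ is odd iff adding $a$ and $b$ produces no carry iff $a$ and $b$ have disjoint dyadic supports — and induct on $r$. Either way the one point requiring a little care is the equality case: that ``equality in the subadditivity $s(a+b)\le s(a)+s(b)$'' is literally ``$a$ and $b$ have disjoint sets of dyadic digits'', but this is exactly the elementary carry observation above, so no serious obstacle is expected.
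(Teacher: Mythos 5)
Your proof is correct. The paper states this lemma without any proof at all, treating it as a classical fact about multinomial coefficients mod $2$ (it goes back to Kummer/Lucas and appears throughout the Steenrod algebra literature), so there is nothing in the paper to compare your argument against. Your route --- Legendre's formula $\nu_2(m!)=m-s(m)$ giving $\nu_2\bigl((n_1,\dots,n_r)\bigr)=\sum_i s(n_i)-s(N)$, followed by the observation that each unit of carry in the base-$2$ addition decreases the digit sum by exactly one, so that the valuation vanishes iff no carries occur iff the dyadic supports are pairwise disjoint --- is the standard complete argument, and the one delicate point (that ``no carry in any column'' is equivalent to ``at most one summand has a $1$ in each column,'' using that the absence of earlier carries means the incoming carry is always $0$) is handled. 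The alternative via the telescoping product $\prod_{j\ge 2}\binom{n_1+\cdots+n_j}{n_j}$ and Kummer's theorem is equally valid. Either version would serve as a proof the paper omits.
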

 
 We denote the Milnor basis element $Sq(0,\cdots,0,r) $ with $r$ at the $t$-th position as $P_t(r)$, and denote $P^s_t=P_t(2^s)$. The homogeneous primitives of $\mathcal{A}_2$ are precisely the $P^0_t$ for all $t\geq 1$. 
 
 \begin{lem}\label{s<t s=t}Here are some direct consequences about the product in $\mathcal{A}_2$:
 	
 	(1) If $r<2^v$ and $u<2^t$ then $[P_t(r),P_v(u)]=0$.
 	
 	(2) If $r<2^t$ then $P_t(r)P_t(s)=(r,s)P_t(r+s)$, in particular $P_t(r)^2=0$. 
 	
 	(3) $P^t_t\cdot P^t_t= P_t(2^t-1)P^0_{2t}$.
 	
 	(4) If $1 \leq r < 2^t$ then $[P^t_t, P_t(r)]=P_t(r-1)P^0_{2t}$
 	
 	(5) If $1\leq r\leq 2^i\leq 2^t$ and $r<2^t$, then $[P_i(r),P^i_t]=P_i(r-1)P_{t+i}^0$
 	
 	(6) If $\leq 2^i\leq r\leq 2^t$, then $[P_i^0,P_t(r)]=P_t(r-2^i)P_{t+i}^0$
 \end{lem}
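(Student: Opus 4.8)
The plan is to derive all six identities directly from the Milnor product formula, reading off which allowable matrices survive once the stated hypotheses are imposed and evaluating the coefficients $\beta(X)$ by means of Lemma \ref{dyadic}; throughout I use that $\mathcal{A}_2$ has characteristic $2$, so a commutator $[x,y]$ equals the sum $xy+yx$. The first step is to record the shape of the allowable matrices for a product $P_a(c)\cdot P_b(d)$, in which the first factor has a single nonzero Milnor coordinate $r_a=c$ and the second a single nonzero coordinate $s_b=d$. The constraints $\sum_j 2^j x_{ij}=r_i$ and $\sum_i x_{ij}=s_j$ force every row other than row $a$ and every column other than column $b$ to vanish, so the only entries that can be nonzero are the two boundary entries $x_{a0},x_{0b}$ and the single interior entry $x_{ab}$. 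Writing $x_{ab}=e$, the boundary entries are $x_{a0}=c-2^b e$ and $x_{0b}=d-e$, and the admissible $e$ are exactly those keeping both nonnegative. This one observation drives every case: the hypotheses are precisely the inequalities that pin down the admissible $e$, while the antidiagonal index of each nonzero entry ($a$ for $x_{a0}$, $b$ for $x_{0b}$, $a+b$ for $x_{ab}$) determines the resulting Milnor symbol.

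For the commuting identities (1) and (2) the hypothesis kills the interior entry. In (1), $r<2^v$ forces $2^v e\le r<2^v$, hence $e=0$, leaving only the trivial matrix in each of the orders $P_t(r)P_v(u)$ and $P_v(u)P_t(r)$; the two trivial matrices give the same Milnor symbol with the same coefficient, so the commutator vanishes. In (2), with coinciding positions $a=b=t$, the bound $r<2^t$ again forces $e=0$, but now the surviving boundary entries $x_{t0}=r$ and $x_{0t}=s$ sit on the \emph{same} antidiagonal, giving $t_t=r+s$ and $\beta(X)=(r,s)$, whence $P_t(r)P_t(s)=(r,s)P_t(r+s)$; the case $P_t(r)^2=0$ then follows since $(r,r)=0$ for $r\ge 1$ by Lemma \ref{dyadic}.

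The remaining identities involve products that leave the range $r<2^t$, so I would compute the matrices in full. Identity (3) is a square: $P^t_t\cdot P^t_t$ admits $e\in\{0,1\}$, the $e=0$ term dies because its coefficient is $(2^t,2^t)=0$, and $e=1$ yields $P_t(2^t-1)P^0_{2t}$. For the commutators (4)--(6) I would compute both orders and add (their sum is the commutator in characteristic $2$); the recurring pattern is that one order admits only $e=0$ while the other admits both $e=0$ and $e=1$, the two $e=0$ terms cancel, and the surviving $e=1$ matrix places an entry on antidiagonal $a+b$, producing a factor $P^0_{a+b}$. Thus in (4) the two $e=0$ terms agree because $(2^t,r)=1$ for $r<2^t$, leaving $P_t(r-1)P^0_{2t}$; and identities (5) and (6)—with distinct positions and selection inequalities $1\le r\le 2^i<2^t$ and $2^i\le r\le 2^t$—yield $P_i(r-1)P^0_{t+i}$ and $P_t(r-2^i)P^0_{t+i}$ in exactly the same way (the degenerate case $i=t$ of (5) reducing to (4)).

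The bookkeeping is routine; the step I would treat most carefully is the evaluation of the coefficients $\beta(X)$ through Lemma \ref{dyadic}. The entire ``commuting versus non-commuting'' dichotomy rests on the two facts $(2^t,2^t)=0$ and $(2^t,r)=1$ for $r<2^t$, together with the observation that every surviving $e=1$ matrix has coefficient $1$ because its nonzero entries are the sole occupants of their antidiagonals. For (5) and (6) one must also confirm that the antidiagonal indices $i$, $t$, $t+i$ are pairwise distinct, so that these entries do not collide and the factor $P^0_{t+i}$ comes out cleanly; verifying that all of these coefficients and indices are exactly as claimed is where the argument must be nailed down.
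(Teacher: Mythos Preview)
Your proposal is correct and follows essentially the same approach as the paper, namely direct inspection of the allowable matrices in the Milnor product formula. The only difference is that the paper defers (1)--(4) to a citation and verifies (5) and (6) ad hoc, whereas your parametrization by the single interior entry $e=x_{ab}$ gives a uniform and self-contained treatment of all six identities.
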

 \begin{proof}
 	(1)-(4) comes from Lemma 1.3 of \cite{adams1992modules}.
 	
 	By definition, if $X=(x_{ij})$ is an allowable matrix for the product $Sq(r_1,r_2\cdots)\cdot Sq(s_1,s_2\cdots )$, then the entries satisfies 
 	\begin{gather*}
 		\sum_{i}x_{ij}=s_j\\
 		\sum_{j}2^jx_{ij}=r_i.
 	\end{gather*}
 	As for (5) $P_i(r)P^i_t $ only has trivial allowable matrix, while beside the trivial matrix, $P^i_tP_i(r)$ has another allowable matrix $X=(x_{uv})$ where $x_{0i}=r-1$, $x_{ti}=1$ and all the other entries are zero.  Note that $\beta(X)=1$, therefore $[P^{i}_{t},P_{i}(r)]=Sq(r_1,r_2\cdots )$ with $r_{i}=r-1$, $r_{t+i}=1$ and all the other coordinates are zero. On the other hand $P_i(r-1)P_{t+i}^0$ only has trivial allowable matrix, thus $P_i(r-1)P_{t+i}^0=Sq(r_1, r_2\cdots )=[P^{i}_{t},P_{i}(r)]$.

 	As for (6), $P^0_iP_t(r)$ only has trivial allowable matrix, while beside the trivial matrix, $P_t(r)P^0_i$ has another allowable matrix $X=(x_{uv})$ where $x_{t0}=r-2^i$, $x_{ti}=1$ and all the other entries are zero.  Note that $\beta(X)=1$, therefore $[P^0_i,P_t(r)]=Sq(r_1,r_2\cdots )$ with $r_{t}=r-2^i$, $r_{t+i}=1$ and all the other components are zero. Meanwhile, $P_t(r-2^i)P_{t+i}^0=Sq(r_1,r_2\cdots)$with $r_{t}=r-2^i$, $r_{t+i}=1$ and all the other components are zero, since the product only has trivial allowable matrix, we are done.
 \end{proof}
 
 \begin{lem}\label{fen jie}
 	Suppose $r=\sum_{i=0}^t \alpha_i 2^i$ is the dyadic decomposition of $r $ and $r<2^{t+1}$, then $P_t(r)=\prod_{\alpha_{i}=1}P^i_t$.
 \end{lem}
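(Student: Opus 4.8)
The plan is to argue by induction on the number $m$ of nonzero dyadic digits of $r$, using only the multiplication rule of Lemma \ref{s<t s=t}(2) and the binomial criterion of Lemma \ref{dyadic}. First I would write the dyadic expansion as $r = 2^{i_1} + 2^{i_2} + \cdots + 2^{i_m}$ with $0 \le i_1 < i_2 < \cdots < i_m \le t$ (the bound $i_m \le t$ being exactly the hypothesis $r < 2^{t+1}$), so that $\{i_1, \dots, i_m\}$ is precisely the set of indices $i$ with $\alpha_i = 1$, and read the product $\prod_{\alpha_i = 1} P^i_t$ in increasing order of the index, i.e.\ as $P^{i_1}_t P^{i_2}_t \cdots P^{i_m}_t$. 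The base case $m = 0$ (that is, $r = 0$) is trivial since both sides are the unit of $\mathcal{A}_2$, and $m = 1$ is just the defining equality $P^{i_1}_t = P_t(2^{i_1})$.

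For the inductive step I would assume $P^{i_1}_t \cdots P^{i_{m-1}}_t = P_t(s)$ with $s := 2^{i_1} + \cdots + 2^{i_{m-1}}$. Since $i_{m-1} < i_m \le t$ we get $i_{m-1} \le t - 1$, hence $s < 2^{i_{m-1} + 1} \le 2^t$, so the hypothesis of Lemma \ref{s<t s=t}(2) is satisfied for the product taken in the order $P_t(s)\cdot P_t(2^{i_m})$, and it gives
$$P^{i_1}_t \cdots P^{i_{m-1}}_t\, P^{i_m}_t = P_t(s)\, P_t(2^{i_m}) = (s, 2^{i_m})\, P_t(s + 2^{i_m}) = (s, 2^{i_m})\, P_t(r).$$
It then remains to observe that $s < 2^{i_{m-1}+1} \le 2^{i_m}$, so $2^{i_m}$ does not occur in the dyadic expansion of $s$; by Lemma \ref{dyadic} this forces $(s, 2^{i_m}) = 1$, which finishes the induction.

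I do not expect a real obstacle here beyond bookkeeping. The one point to be careful about is that in each application of Lemma \ref{s<t s=t}(2) the \emph{left} factor must have argument strictly below $2^t$, which is exactly why the product has to be built up in increasing order of the index; the factor $P^t_t = P_t(2^t)$, whose argument equals $2^t$ rather than being $< 2^t$, can appear only as the final factor $P^{i_m}_t$, and there it sits on the right of the pair being multiplied, so the restriction is never violated. Once this ordering convention is fixed, the whole argument reduces to the two lemmas already quoted.
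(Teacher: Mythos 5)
Your proof is correct and follows exactly the route the paper intends: its own proof is the one-line remark that the lemma ``follows from (2) of Lemma \ref{s<t s=t} and Lemma \ref{dyadic},'' and your induction on the number of dyadic digits, with the careful ordering so that the left factor always has argument strictly below $2^t$, is precisely the bookkeeping that remark leaves implicit.
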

 \begin{proof}
 	The lemma follows from (2) of Lemma \ref{s<t s=t} and Lemma \ref{dyadic}.
 \end{proof}
 
 We define the excess of a Milnor basis element by 
 $$ex(Sq(r_1,r_2,\cdots))=r_1+r_2+\cdots.$$
 This relates well to the product:
 \begin{lem}\label{excess}[Lemma 15.1.2 of \cite{margolis2011spectra}]
 	$$Sq(r_1,r_2,\cdots\cdot) Sq(s_1,s_2,\cdots )=bSq(r_1+s_1,r_2+s_2,\cdots )+\Sigma Sq(t_1,t_2,\cdots )$$ with $b=\prod(r_i,s_i)$ and $ex( Sq(t_1,t_2,\cdots ))<ex(Sq(r_1+s_1,r_2+s_2,\cdots ))$
 \end{lem}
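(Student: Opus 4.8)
The plan is to read the statement straight off Milnor's product formula recalled above, the key point being that, for a basis element $Sq(t_1,t_2,\cdots)$ arising from an allowable matrix $X=(x_{ij})$, its excess equals the sum of all entries of $X$ (off the corner position $x_{00}=*$), and that this sum is maximized by the trivial matrix and by no other. First I would fix notation, writing $R=(r_1,r_2,\cdots)$ and $S=(s_1,s_2,\cdots)$, and for each allowable matrix $X$ setting $T(X)=(t_1,t_2,\cdots)$ with $t_k=\sum_{i+j=k}x_{ij}$, so that Milnor's formula reads $Sq(R)\,Sq(S)=\sum_X\beta(X)\,Sq(T(X))$. The opening observation is the identity $ex(Sq(T(X)))=\sum_{k\geq 1}t_k=\sum_{(i,j)\neq(0,0)}x_{ij}$, immediate from the definitions; moreover the trivial matrix $X_0$ (with $x_{i0}=r_i$, $x_{0j}=s_j$, all other entries $0$) has $t_k=r_k+s_k$, hence $Sq(T(X_0))=Sq(r_1+s_1,r_2+s_2,\cdots)$ and $ex(Sq(T(X_0)))=ex(R)+ex(S)$.

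The heart of the argument is the inequality $\sum_{(i,j)\neq(0,0)}x_{ij}\leq ex(R)+ex(S)$ for every allowable $X$, with equality only when $X=X_0$. I would split the entry sum into the rows indexed by $i\geq 1$ and the zeroth row. For each $i\geq 1$, the weighted row relation $r_i=\sum_{j\geq 0}2^jx_{ij}$ forces $\sum_{j\geq 0}x_{ij}\leq r_i$, with equality exactly when $x_{ij}=0$ for all $j\geq 1$; summing over $i\geq 1$ bounds the contribution of these rows by $ex(R)$. For the zeroth row, the column relation $s_j=\sum_{i\geq 0}x_{ij}$ gives $x_{0j}=s_j-\sum_{i\geq 1}x_{ij}\leq s_j$, so that row contributes at most $ex(S)$, with equality exactly when $x_{ij}=0$ for all $i,j\geq 1$. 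Adding the two bounds gives $\sum_{(i,j)\neq(0,0)}x_{ij}\leq ex(R)+ex(S)=ex(Sq(r_1+s_1,r_2+s_2,\cdots))$, and equality holds precisely when $x_{ij}=0$ for all $i,j\geq 1$; in that case the constraints force $x_{i0}=r_i$ and $x_{0j}=s_j$, i.e. $X=X_0$.

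It remains to evaluate $\beta(X_0)$ and to assemble the pieces. On the $k$-th anti-diagonal of $X_0$ the only nonzero entries are $x_{k0}=r_k$ and $x_{0k}=s_k$, so the $k$-th factor of $\beta(X_0)$ is the multinomial coefficient $(r_k,0,\cdots,0,s_k)=(r_k,s_k)$, whence $\beta(X_0)=\prod_k(r_k,s_k)=b$. Since $X_0$ is, by the inequality above, the unique allowable matrix whose term $Sq(T(X))$ has the maximal excess $ex(R)+ex(S)$, it is the only matrix contributing to the coefficient of $Sq(r_1+s_1,r_2+s_2,\cdots)$ — so that coefficient is exactly $b$ — while every other allowable matrix contributes a basis element of strictly smaller excess; collecting the latter into the error term yields the stated formula.

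I do not expect a genuine obstacle here: the only care needed is bookkeeping with the matrix indices — keeping track that the corner entry $x_{00}=*$ is excluded and which anti-diagonals are being summed — together with the trivial but essential fact that $2^j\geq 1$, with equality iff $j=0$, which is precisely what lets the weighted row sums dominate the ordinary row sums. All of the content sits in the equality analysis of the central inequality.
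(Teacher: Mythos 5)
Your argument is correct and is essentially the standard proof of Milnor's excess formula (the paper itself gives no proof, simply citing Lemma 15.1.2 of Margolis, whose argument is the same excess-counting over allowable matrices). The key inequality $\sum_{(i,j)\neq(0,0)}x_{ij}\leq ex(R)+ex(S)$, its equality analysis forcing $X=X_0$, and the evaluation $\beta(X_0)=\prod_k(r_k,s_k)$ are all handled correctly.
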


 \begin{defi}
 	Let $B$ be a Hopf subalgebra of $\mathcal{A}_2$ and define the profile function of $B$,  $h_B:\{1,2,\cdots\}\to \{0,1,\cdots, \infty\} $ by $$h_B(t)=\min\{s\vert r_t<2^s\ for\  all\ Sq(r_1,r_2,\cdots) \in B\},$$ and $h_B(t)=\infty $ if no such $s$ exists. 
 \end{defi}
 
 The importance of profile functions and the $P^s_t$'s can  be seen in the following classification theorem:
 \begin{thm}[Theorem 15.1.6 of \cite{margolis2011spectra}]\label{class of sub hopf alg}
 	Given a  Hopf subalgebra $B$ of $\mathcal{A}_2$, then 
 	
 	(a) $B$ is spanned by the Milnor basis elements in it; precisely, $B$ has a $\mathbb{Z}/2$-basis $\{Sq(r_1,r_2,\cdots)\vert r_t<2^{h_B(t)}\}$.
 	
 	(b) $B$ is generated as an algebra by $\{P^s_t\vert s<h_B(t)\}$.
 	
 	(c) $h:\{1,2,\cdots\}\to \{0,1,\cdots, \infty\} $ is the profile function of a   Hopf subalgebra if and only if for all $u,v\geq 1$, $h(u)\leq v+h(u+v)$ or $h(v)\leq h(u+v)$. Moreover, the algebra being normal if the latter condition is always satisfied.
 \end{thm}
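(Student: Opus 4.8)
First I would dualize. Recall the dual Steenrod algebra $\mathcal{A}_2^{*}=\mathbb{Z}/2[\xi_1,\xi_2,\dots]$, a polynomial algebra with $|\xi_t|=2^t-1$ and coproduct $\psi(\xi_n)=\sum_{i=0}^{n}\xi_{n-i}^{2^i}\otimes\xi_i$ (where $\xi_0=1$), whose monomial basis $\xi^{R}=\prod_t\xi_t^{r_t}$ is dual to the Milnor basis $Sq(R)$; in particular $P^{s}_{t}=P_t(2^s)$ is dual to $\xi_t^{2^s}$. As $\mathcal{A}_2$ is of finite type, taking annihilators gives an inclusion-reversing bijection between graded sub-Hopf-algebras $B\subseteq\mathcal{A}_2$ and graded quotient Hopf algebras of $\mathcal{A}_2^{*}$, under which $B$ corresponds to $\mathcal{A}_2^{*}/B^{\perp}\cong B^{*}$, with $B^{\perp}$ a Hopf ideal and $B=(B^{\perp})^{\perp}$. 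Thus it suffices to classify the Hopf ideals $I\subseteq\mathcal{A}_2^{*}$ and to read $B$ off from $I$.

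The central claim, which is also the main obstacle, is that each quotient is a truncated polynomial Hopf algebra: $C:=\mathcal{A}_2^{*}/I\cong\bigotimes_{t\ge 1}\mathbb{Z}/2[\bar\xi_t]/(\bar\xi_t^{2^{h(t)}})$ for a unique profile function $h$, equivalently $I$ is the monomial ideal $(\xi_t^{2^{h(t)}}:t\ge 1)$. Here I would combine Borel's structure theorem with the coproduct. Borel gives, as algebras, $C\cong\bigotimes_\alpha\mathbb{Z}/2[z_\alpha]/(z_\alpha^{2^{s_\alpha}})$, with the $z_\alpha$ in bijection with a homogeneous basis of the indecomposables $QC=C^{+}/(C^{+})^2$. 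Since $QC$ is a quotient of $Q\mathcal{A}_2^{*}$, which is one-dimensional in each degree $2^t-1$ and zero otherwise, there is at most one generator in each degree $2^t-1$, represented by $\bar\xi_t$. The delicate point is to show that each $\bar\xi_t$ is either an indecomposable generator or zero, and that the truncating relation is the pure power $\bar\xi_t^{2^{h(t)}}=0$ with no lower-order correction; both are forced by testing $\psi(\bar\xi_t)=\sum_i\bar\xi_{t-i}^{2^i}\otimes\bar\xi_i$ against the coproduct of any putative decomposable expression. For instance $\bar\xi_2=\bar\xi_1^3$ is excluded because it would force the cross term $\bar\xi_1\otimes\bar\xi_1^2$ to vanish, whence $\bar\xi_1^2=0$ and so $\bar\xi_2=\bar\xi_1\cdot\bar\xi_1^2=0$. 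This coproduct bookkeeping is where the real content lies.

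Granting the claim, (a) and (b) follow quickly. Dualizing the monomial ideal $I=(\xi_t^{2^{h(t)}})$, its annihilator $B=(B^{\perp})^{\perp}$ is spanned by exactly those $Sq(R)$ with $r_t<2^{h(t)}$ for all $t$, which is (a) and identifies $h$ with $h_B$. For (b), every generator $P^{s}_{t}$ with $s<h(t)$ lies in this box; conversely, by the factorization $P_t(r)=\prod_{\alpha_i=1}P^{i}_{t}$ of Lemma~\ref{fen jie} together with the leading-term formula of Lemma~\ref{excess}, each box element $Sq(R)$ equals a product of $P^{s}_{t}$'s ($s<h(t)$) plus Milnor basis elements of strictly smaller excess that again lie in $B$; downward induction on excess then shows the $P^{s}_{t}$ generate $B$ as an algebra.

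Finally (c) is the explicit condition for $I=(\xi_t^{2^{h(t)}})$ to be a Hopf ideal, namely $\psi(\xi_t^{2^{h(t)}})\in I\otimes\mathcal{A}_2^{*}+\mathcal{A}_2^{*}\otimes I$ for all $t$. Squaring the coproduct, $\psi(\xi_t^{2^{h(t)}})=\sum_{i=0}^{t}\xi_{t-i}^{2^{i+h(t)}}\otimes\xi_i^{2^{h(t)}}$; the end terms $i=0,t$ always lie in the required subspace, and for $1\le i\le t-1$, writing $j=t-i$, the monomial tensor $\xi_j^{2^{i+h(t)}}\otimes\xi_i^{2^{h(t)}}$ lies there iff its left factor is in $I$ (that is $h(j)\le i+h(t)$) or its right factor is in $I$ (that is $h(i)\le h(t)$). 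Putting $u=j$ and $v=i$ turns this into ``$h(u)\le v+h(u+v)$ or $h(v)\le h(u+v)$'' for all $u,v\ge 1$, which is the stated criterion. If moreover the second alternative $h(v)\le h(u+v)$ holds for every pair, then in each term other than the principal one $\xi_t^{2^{h(t)}}\otimes 1$ the right tensor factor lies in $I$, so that $\psi(\xi_t^{2^{h(t)}})\in\xi_t^{2^{h(t)}}\otimes 1+\mathcal{A}_2^{*}\otimes I$; this one-sided coideal condition is dual to $B$ being normal, giving the normality statement. As noted, the one genuinely hard step is the structural claim of the second paragraph, everything else being manipulation of the product and coproduct formulas recorded above.
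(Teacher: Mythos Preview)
The paper does not prove this theorem; it is quoted from Margolis (Theorem 15.1.6) as background, and the paper later even refers explicitly to ``the proof of Theorem 15.1.6 (c) of \cite{margolis2011spectra}'' when establishing Lemma~\ref{AB^+}. So there is no in-paper argument to compare against.

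Your dualization strategy---pass to quotient Hopf algebras of $\mathcal{A}_2^{*}$, identify each as a truncated polynomial algebra $\bigotimes_t\mathbb{Z}/2[\bar\xi_t]/(\bar\xi_t^{2^{h(t)}})$, and read off the profile function---is the standard proof and is what Margolis does. Your derivations of (a) and of the profile criterion in (c) from the structural claim are correct, and the downward induction on excess for (b) via Lemmas~\ref{fen jie} and~\ref{excess} is the right mechanism. You also correctly flag the structural claim as the only substantive step; your coproduct example ($\bar\xi_2=\bar\xi_1^{3}$) is suggestive but not the general argument, which needs a systematic induction on $t$ showing that any decomposable expression for $\bar\xi_t$ forces $\bar\xi_t=0$ and that the minimal relation on $\bar\xi_t$ is a pure $2$-power. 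The normality clause is likewise a bit quick: the one-sided condition you isolate (each generator $\xi_t^{2^{h(t)}}$ becoming coinvariant for the right $C$-coaction) is indeed the dual of normality, but the passage from that to $B^{+}\mathcal{A}_2=\mathcal{A}_2B^{+}$ deserves an explicit sentence invoking the identification of $(\mathcal{A}_2//B)^{*}$ with the sub-Hopf-algebra of right $C$-coinvariants in $\mathcal{A}_2^{*}$.
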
 
\begin{remark}
 Suppose $h_A(t)$ and $h_B(t)$ are two profile functions. Then $A$ is a Hopf subalgebra of $B$ if and only if $h_A(t)\leq h_B(t)$ for all $t$.
\end{remark}
 
\begin{remark}
Suppose $h_A(t)$ and $h_B(t)$ are two profile functions for graded Hopf subalgebras. Then $m(t)=min\{h_A(t),h_B(t)\}$ is the profile function of $A\cap B$. 
\end{remark}
 
 \begin{defi}Here are two examples:
 	
 	(1) For each $t$ define a Hopf subalgebra $J(t)$ by the profile function $h(u)=0, u\neq t$ and $h(t)=t$. Then $J(t)$ is an exterior algebra on generators $P^0_t,\cdots, P^{t-1}_t$. 
 	
 	(2) $A(n)$ is defined by the profile function $h(t)=max\{n+2-t,0\}$. It has a minimal generating set $\{P^0_1,P^1_1,\cdots P^n_1\}.$
 \end{defi}
 
\begin{remark}
 $A(n)$ is the Hopf subalgebra generated by $Sq^1,Sq^2,\cdots ,Sq^{2^n}$.
\end{remark}
 
 \begin{cor}\label{chengji feiling}
 	Suppose $B$ is a finite Hopf subalgebra of $\mathcal{A}_2$, then 
 	$$\prod_{t=1}\left(\prod_{s=0}^{h_{B}(t)-1}P^{s}_{t}\right)\neq 0.$$
 \end{cor}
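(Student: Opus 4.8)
The plan is to prove this by an excess-filtration argument based on Lemma \ref{excess}. Note first that each factor $P^s_t=P_t(2^s)=Sq(0,\dots,0,2^s,0,\dots)$, with $2^s$ in the $t$-th slot, is a Milnor basis element supported in a single coordinate; and since $B$ is finite we have $h_B(t)=0$ for all but finitely many $t$, so the displayed product is a \emph{finite} product of such elements. (When $h_B(t)\le t+1$, Lemma \ref{fen jie} already rewrites the inner product $\prod_{s=0}^{h_B(t)-1}P^s_t$ as $P_t(2^{h_B(t)}-1)$, but we shall not need this, and instead argue directly with the factors $P^s_t$.)

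The key step is to determine the leading term, with respect to the excess, of an arbitrary finite product of Milnor basis elements. By Lemma \ref{excess}, for two Milnor basis elements $Sq(r_1,r_2,\dots)$ and $Sq(s_1,s_2,\dots)$ the product equals $\big(\prod_i(r_i,s_i)\big)\,Sq(r_1+s_1,r_2+s_2,\dots)$ plus a sum of Milnor basis elements of strictly smaller excess; in particular \emph{every} monomial appearing in the product has excess at most $\sum_i(r_i+s_i)$. Iterating this observation, and using the elementary multinomial identity $(a_1,\dots,a_k)\binom{a_1+\dots+a_{k+1}}{a_{k+1}}=(a_1,\dots,a_{k+1})$, an induction on $N$ then gives that a product $Sq(r^{(1)})\cdots Sq(r^{(N)})$ of Milnor basis elements equals
$$\Big(\prod_i\big(r^{(1)}_i,r^{(2)}_i,\dots,r^{(N)}_i\big)\Big)\ Sq\!\big(r^{(1)}+r^{(2)}+\cdots+r^{(N)}\big)$$
(the coordinate-wise sum inside $Sq$) plus a sum of Milnor basis elements of excess strictly less than $\sum_i\sum_u r^{(u)}_i$. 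The only things to verify here are that the lower-excess terms produced at each stage remain of lower excess after multiplying by the next factor — which is exactly the ``excess at most the sum'' statement above — together with the multinomial identity; in particular the leading term does not depend on the order in which the factors are multiplied.

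It remains to apply this to the factors $P^s_t$. For a fixed coordinate $t$, the only entries contributed in that coordinate are $2^0,2^1,\dots,2^{h_B(t)-1}$, which are distinct powers of $2$; their dyadic expansions are therefore pairwise disjoint, so by Lemma \ref{dyadic} the multinomial coefficient $\big(2^0,2^1,\dots,2^{h_B(t)-1}\big)$ is $1$. Hence the leading term of $\prod_{t\ge1}\big(\prod_{s=0}^{h_B(t)-1}P^s_t\big)$ is $Sq(r_1,r_2,\dots)$ with $r_t=\sum_{s=0}^{h_B(t)-1}2^s=2^{h_B(t)}-1$, with coefficient $1$. This is a nonzero Milnor basis element (and by Theorem \ref{class of sub hopf alg}(a) it lies in $B$; it is in fact the top class of the Poincaré algebra $B$). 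Being the unique monomial of maximal excess, it cannot be cancelled, so the whole product is nonzero. The main work is the induction establishing the leading-term formula for products of several Milnor basis elements — routine, but it requires carefully tracking the multinomial coefficients through the iterated use of Lemma \ref{excess} and checking that the lower-excess terms behave as claimed.
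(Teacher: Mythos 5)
Your argument is correct and is essentially the paper's own proof: the paper likewise obtains the result by iterating Lemma \ref{excess} to identify the leading term $Sq(r_1,r_2,\dots)$ with $r_t=2^{h_B(t)}-1$, which survives because it has strictly maximal excess. You have merely made explicit the bookkeeping the paper leaves implicit — that the accumulated coefficient is the multinomial $\bigl(2^0,\dots,2^{h_B(t)-1}\bigr)=1$ by Lemma \ref{dyadic} — which is a welcome but not essentially different elaboration.
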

 \begin{proof}
 	Iterated applications of Lemma \ref{excess} tell that
 	$$\prod_{t=1}\left(\prod_{s=0}^{h_{B}(t)-1}P^{s}_{t}\right)=Sq(r_1,r_2,\cdots) + (\text{terms with smaller excess}) \neq 0$$
 	where $r_i=2^{h_B(i)}-1$. 
 \end{proof}
 Next, we introduce conditions for a function to be a profile function.
 \begin{lem}\label{lemma 1}
 	If $h_2$ is a profile function, $h_1:\{1,2,\cdots \}\to \{0,1,2\cdots \}$ is a function such that 
 	
 	(1) $h_1(t)\leq h_2(t)$, for all $t\geq 1$.
 	
 	(2)  $\exists t_0\geq 1$ such that $h_1(t)=h_2(t)$ for $t\geq t_0$ .
 	
 	Then $h_1$ is a profile function if $h_1(u)\leq v+h_1(u+v)$ or $h_1(v)\leq h_1(u+v)$, for all $u,v\geq 1$ and $u+v\leq t_0$.
 \end{lem}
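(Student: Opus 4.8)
The plan is to verify directly the profile-function criterion of Theorem \ref{class of sub hopf alg}(c): namely that for all $u,v\geq 1$ one has $h_1(u)\leq v+h_1(u+v)$ or $h_1(v)\leq h_1(u+v)$. I would split the verification into two cases according to the size of $u+v$ relative to $t_0$.

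If $u+v\leq t_0$, the desired dichotomy is precisely the standing hypothesis, so there is nothing to prove. The interesting case is $u+v>t_0$, where the key observation is that $u+v\geq t_0$ forces $h_1(u+v)=h_2(u+v)$ by condition (2). Since $h_2$ is itself a profile function, Theorem \ref{class of sub hopf alg}(c) applied to $h_2$ yields $h_2(u)\leq v+h_2(u+v)$ or $h_2(v)\leq h_2(u+v)$. In the first case I combine this with condition (1) to get $h_1(u)\leq h_2(u)\leq v+h_2(u+v)=v+h_1(u+v)$; in the second case, condition (1) gives $h_1(v)\leq h_2(v)\leq h_2(u+v)=h_1(u+v)$. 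Either way the inequality required of $h_1$ holds, and since $h_1$ takes only finite values this also shows $h_1$ is a genuine profile function (not the one with $h_1(u+v)=\infty$ artificially making things trivial).

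This essentially finishes the argument, so I do not expect a serious obstacle; the content is really just the remark that beyond $t_0$ the condition for $h_1$ is inherited from $h_2$ via pointwise domination. The minor points to handle with care are the boundary value $u+v=t_0$, which is covered consistently both by the hypothesis and by condition (2), and the degenerate case $t_0=1$, where $u,v\geq 1$ forces $u+v>t_0$ always and the hypothesis is vacuous; in both situations the case split above applies verbatim. It is also worth noting, in case $h_2$ assumes the value $\infty$, that the first alternative $h_2(u)\leq v+h_2(u+v)$ cannot hold with a finite right-hand side (which it is, as $u+v\geq t_0$), so one automatically lands in the second alternative — the logical structure above already accommodates this.
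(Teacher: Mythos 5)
Your proof is correct and follows essentially the same route as the paper: for $u+v>t_0$ you use condition (2) to identify $h_1(u+v)$ with $h_2(u+v)$ and then transfer the dichotomy for the profile function $h_2$ down to $h_1$ via the pointwise bound $h_1\leq h_2$ (the paper phrases this by assuming the second alternative fails for $h_1$ and deducing the first, but it is the same argument). The remarks on the boundary case $u+v=t_0$ and on infinite values of $h_2$ are fine but not needed beyond what you already wrote.
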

 \begin{proof}
 	If $u+v> t_0$ and $h_1(v)>h_1(u+v)=h_2(u+v)$ then $h_2(v)\geq h_1(v)>h_2(u+v)$. By assumption, $h_2$ is a profile function, thus $h_2(u)\leq v+h_2(u+v)$. Therefore $h_1(u)\leq h_2(u)\leq v+h_2(u+v)=v+h_1(u+v)$. 
 \end{proof}
 
 \begin{lem}\label{lemma 2}
 	Suppose $h$ is a profile function, $h_1:\{1,2,\cdots \}\to \{0,1,2\cdots \}$ is a function such that  $$h_1(t)=\begin{cases}
 		h(t)& t\neq t_0\\
 		0 & t=t_0
 	\end{cases}$$
 	Then $h_1(t)$ is a profile function if for $1\leq u< t_0$,   $h_1(t_0-u)>0$ implies $h_1(u)\leq t_0-u$.
 \end{lem}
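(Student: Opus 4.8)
The plan is to verify the criterion of part~(c) of Theorem~\ref{class of sub hopf alg}: a function $h_1$ is the profile function of a graded Hopf subalgebra precisely when, for all $u,v\ge 1$, either $h_1(u)\le v+h_1(u+v)$ or $h_1(v)\le h_1(u+v)$. Since $h_1$ agrees with the given profile function $h$ at every argument except $t_0$, I would run through this condition by cases according to whether $u=t_0$, $v=t_0$, $u+v=t_0$, or none of these holds.

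If none of $u$, $v$, $u+v$ equals $t_0$, then $h_1$ coincides with $h$ at all three arguments, and the desired inequality is inherited directly from the fact that $h$ is a profile function. If $u=t_0$, then $h_1(u)=0\le v+h_1(u+v)$ since $h_1$ takes nonnegative values, so the first alternative holds; symmetrically, if $v=t_0$, then $h_1(v)=0\le h_1(u+v)$ and the second alternative holds. Each of these cases is immediate and uses none of the hypothesis.

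The only case where the hypothesis is needed is $u+v=t_0$. Here $u+v=t_0$ forces $1\le u<t_0$ and $1\le v<t_0$, hence $h_1(u)=h(u)$ and $h_1(v)=h(v)$, while $h_1(u+v)=h_1(t_0)=0$. Applying the assumption with the variable $u$ (legitimate since $1\le u<t_0$) and writing $v=t_0-u$, it reads: if $h_1(v)=h_1(t_0-u)>0$, then $h_1(u)\le t_0-u=v$. Thus if $h_1(v)=0$ the second alternative $h_1(v)\le h_1(u+v)$ holds, and if $h_1(v)>0$ the first alternative $h_1(u)\le v=v+h_1(u+v)$ holds. In every case the criterion is satisfied, so $h_1$ is a profile function.

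The argument involves no real difficulty; the one point to watch is that the hypothesis is asymmetric in $u$ and $v$, so one must be careful to invoke it with the correct variable playing the role of ``$u$'' in the case $u+v=t_0$ — this is precisely the step carrying all the content of the lemma.
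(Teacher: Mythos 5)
Your proof is correct and follows essentially the same route as the paper's: verify the criterion of Theorem \ref{class of sub hopf alg}(c) by cases according to whether $u=t_0$, $v=t_0$, $u+v=t_0$, or none of these, with the hypothesis entering only in the case $u+v=t_0$. Your write-up is in fact slightly more explicit than the paper's about which alternative of the criterion holds in each case and about the asymmetry of the hypothesis.
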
  
 \begin{proof}
 	Since $h_1(t_0)=0$, $h_1(t_0)\leq h_1(t_0+u)\leq u+h_1(t_0+u)$.
 	
 	By assumption, for $1\leq u< t_0$,   $h_1(t_0-u)>0$ implies $h_1(u)\leq t_0-u$. Therefore,  $h_1(u)\leq t_0-u+h_1(t_0) $ or $h_1(t_0-u)\leq h_1(t_0)$.
 	
 	For the rest situation, namely $1\leq u,v$ such that $u\neq t_0$, $v\neq t_0$ and $u+v\neq t_0$, one has $h_1(u)=h(u)$, $h_1(v)=h(v)$ and $h_1(u+v)=h(u+v)$. Since $h$ is a profile function, $h_1(u)\leq v+h_1(u+v)$ or $h_1(v)\leq h_1(u+v)$, for all such $u,v$. 
 \end{proof}

Suppose $A\subset \mathcal{A}_2$ is a Hopf subalgebra and $B\subset A$  is normal Hopf subalgebra of $A$. In order to describe $A//B$, it's helpful to know more about $AB^+=B^+A$, the ideal generated by $B$.

 \begin{lem}\label{AB^+}
 	Suppose $A\subset \mathcal{A}_2$ is a Hopf subalgebra and $B=C\cap A$  where $C$ is normal Hopf subalgebra of $\mathcal{A}_2$. If $\forall s, t$ such that $0\leq s< h_C(t)$, one has $2^s\notin r_t$, then $Sq(r_1,r_2,\cdots)$ is not a summand of any
 	$x \in AB^+=B^+A$.
 \end{lem}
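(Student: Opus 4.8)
The plan is to peel the ideal $AB^+$ down to products of a single generator of $B$ against a Milnor basis element of $A$, and then read the answer off Milnor's product formula using $2$-adic valuations and the normality of $C$. First I would reformulate the hypothesis: "for all $0\le s<h_C(t)$ one has $2^s\notin r_t$" means precisely that $2^{h_C(t)}\mid r_t$ for every $t$, i.e.\ the low dyadic digits of each $r_t$ vanish. Write $h_B=\min\{h_C,h_A\}$ for the profile function of $B=C\cap A$. By Theorem~\ref{class of sub hopf alg}, $B$ is generated as an algebra by $\{P^s_t:s<h_B(t)\}$; since these generators lie in $B^+$ one gets $B^+=\sum_{s<h_B(t)}P^s_t B$, and as $B\subseteq A$ forces $BA=A$,
$$AB^+=B^+A=\sum_{s<h_B(t)}P^s_t\, A .$$
Because $A$ has $\mathbb{Z}/2$-basis $\{Sq(u_1,u_2,\cdots):u_i<2^{h_A(i)}\}$, the statement reduces to the following: for a fixed generator $P^a_c$ of $B$ (so $a<h_B(c)\le h_C(c)$) and a fixed Milnor basis element $Sq(u_1,u_2,\cdots)\in A$, no Milnor summand of $P^a_c\cdot Sq(u_1,u_2,\cdots)$ satisfies the hypothesis.

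Next I would make this product explicit. Its left factor is $Sq(R)$ with $r_c=2^a$ and $r_i=0$ otherwise, so every allowable matrix $X$ has all rows zero except rows $0$ and $c$; setting $y_j=x_{cj}$ one has $\sum_j 2^j y_j=2^a$ (hence $y_j=0$ for $j>a$), and the column conditions force $x_{0j}=u_j-y_j\ge 0$ for $j\ge 1$. A short bookkeeping gives the output exponent sequence $T(y)_n=u_n-y_n+y_{n-c}$ (with $y_m=0$ for $m<0$) and the coefficient
$$\beta(X)=\prod_{j\ge 0}\bigl(u_{c+j}-y_{c+j},\,y_j\bigr),$$
a product of binomial coefficients mod $2$.

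The crux is then this. Suppose some summand $Sq(T(y))$ with $\beta(X)=1$ satisfies the hypothesis, and fix $j$ with $y_j\ne 0$. From $\beta(X)=1$ the factor $\bigl(u_{c+j}-y_{c+j},\,y_j\bigr)$ equals $1$, so by Lemma~\ref{dyadic} the integers $u_{c+j}-y_{c+j}$ and $y_j$ have disjoint dyadic expansions; hence every power of $2$ occurring in $y_j$ also occurs in $T(y)_{c+j}=(u_{c+j}-y_{c+j})+y_j$. Since $Sq(T(y))$ satisfies the hypothesis, every such power is $2^i$ with $i\ge h_C(c+j)$, so $v_2(y_j)\ge h_C(c+j)$. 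Now normality of $C$ enters: by Theorem~\ref{class of sub hopf alg}(c) one has $h_C(c)\le h_C(c+j)$ for $j\ge 1$ (and trivially $h_C(c)=h_C(c+0)$), so together with $a<h_C(c)$ we obtain $v_2\bigl(2^j y_j\bigr)\ge v_2(y_j)\ge h_C(c)>a$ for every $j$ with $y_j\ne 0$. Summing the finitely many nonzero terms gives $v_2\bigl(\sum_j 2^j y_j\bigr)>a$, contradicting $\sum_j 2^j y_j=2^a$. This proves the reduced claim, and hence the lemma.

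The delicate point is the coefficient $\beta(X)$: the "top" Milnor term $Sq(u_1,\cdots,u_c+2^a,\cdots)$ of $P^a_c\cdot Sq(u_1,u_2,\cdots)$ can itself satisfy the hypothesis, so one cannot argue term by term in Milnor's formula while ignoring coefficients. The content of the computation above is precisely that whenever $T(y)$ could satisfy the hypothesis, the disjointness of dyadic expansions required for $\beta(X)=1$ must fail, and the valuation argument is the uniform way to package this.
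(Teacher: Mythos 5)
Your argument is correct, but it is a genuinely different (and much longer) route than the paper's. The paper disposes of the lemma in two lines: it quotes, from the proof of Theorem 15.1.6(c) in Margolis, the explicit Milnor-basis description $\mathcal{A}_2C^+=C^+\mathcal{A}_2=\mathrm{span}\{Sq(u_1,u_2,\cdots)\mid \exists\, 0\le s<h_C(t)\text{ with }2^s\in u_t\}$ and then simply observes that $AB^+=B^+A$ sits inside this ideal. What you do instead is reprove the needed inclusion from scratch: you reduce $B^+A$ to single products $P^a_c\cdot Sq(u_1,u_2,\cdots)$ with $a<h_B(c)\le h_C(c)$, write out the allowable matrices explicitly, and combine the disjointness of dyadic expansions forced by $\beta(X)=1$ (Lemma \ref{dyadic}) with a $2$-adic valuation of the row constraint $\sum_j 2^jy_j=2^a$ to get a contradiction. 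I checked the bookkeeping (the output exponents $T(y)_n=u_n-y_n+y_{n-c}$, the coefficient $\prod_{j\ge0}(u_{c+j}-y_{c+j},y_j)$, and the $j=0$ column) and it is right; in particular you correctly identify that one cannot ignore the coefficients, which is the only delicate point. So your proof buys self-containedness at the cost of redoing the computation the paper merely cites. Two minor cautions: your appeal to normality is via ``$h_C(c)\le h_C(c+j)$,'' i.e.\ normal implies $h_C$ non-decreasing, which is the converse of the direction actually recorded in the paper's statement of Theorem \ref{class of sub hopf alg}(c); the full equivalence is what Margolis proves, so this is fine but should be cited precisely. And you establish the conclusion for $x\in B^+A$ and let the asserted equality $AB^+=B^+A$ transport it to $AB^+$; the paper's proof does the same, so this is not a defect, but note that neither argument actually proves that equality.
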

 \begin{proof}
 	By the proof of Theorem 15.1.6 (c) of \cite{margolis2011spectra}, we have that
 	$$\mathcal{A}_2C^+=C^+\mathcal{A}_2=span\{Sq(u_1,u_2\cdots) \vert  \exists 0\leq s< h_C(t)\text { such that }  2^s\in u_t\}.$$ 
 	Thus by assumption, $Sq(r_1,r_2,\cdots)$ is not a summand of any $x\in \mathcal{A}_2C^+=C^+\mathcal{A}_2$ and thus $Sq(r_1,r_2,\cdots)$ is not a summand of any 
 	$x\in  AB^+=B^+A$.
 \end{proof}
 
 We state the following two lemmas here for later argument. 
 
 \begin{lem}\label{class of elementary }
 	Given a finite  Hopf subalgebra $A$ of $\mathcal{A}_2$. If $B$ is a maximal elementary Hopf subalgebra of $A$ then $B=B_i\cap A$ for some $i$, where 
 	$$h_{B_i}(t)=\begin{cases}
 		0& t<i\\
 		i& t\geq i
 	\end{cases}$$
 \end{lem}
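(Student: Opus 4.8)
\emph{Proof proposal.} The plan is to reduce the statement to a single containment and then use maximality. Concretely, I would establish two facts: (a) for every index $i$, the algebra $B_i\cap A$ is an elementary Hopf subalgebra of $A$; and (b) every elementary Hopf subalgebra $B\subseteq A$ is contained in $B_i\cap A$ for a suitable $i$. Granting (a) and (b), if $B$ is a \emph{maximal} elementary Hopf subalgebra of $A$, then $B\subseteq B_i\cap A$ together with (a) forces $B=B_i\cap A$, which is the claim.

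For (a): the intersection $B_i\cap A$ is a Hopf subalgebra of $\mathcal{A}_2$ lying inside both $B_i$ and $A$, hence a Hopf subalgebra of $A$; and a Hopf subalgebra of an elementary Hopf algebra is again elementary, so it is enough to check that $B_i$ is elementary. By Theorem~\ref{class of sub hopf alg}(b), $B_i$ is generated by the elements $P^s_t$ with $s<i\le t$. Each such generator has $s<t$, so $(P^s_t)^2=P_t(2^s)^2=0$ by Lemma~\ref{s<t s=t}(2); and any two generators $P^a_u$, $P^b_v$ with, say, $u\le v$ commute by Lemma~\ref{s<t s=t}(1)--(2), the required inequalities $2^a<2^v$ and $2^b<2^u$ holding because $a,b<i\le u\le v$. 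So $B_i$ is commutative; since $\mathcal{A}_2$ is a $\mathbb{Z}/2$-algebra, in a commutative $\mathbb{Z}/2$-algebra generated by square-zero elements every element of the augmentation ideal squares to zero, so $B_i$ is elementary.

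For (b): let $B\subseteq A$ be elementary. If $A=\mathbb{Z}/2$ there is nothing to prove; otherwise some $h_A(t)\ge 1$, so $A$ contains the primitive $P^0_t$ with $t\ge 1$ and $(P^0_t)^2=0$; this generates a nonzero elementary Hopf subalgebra, so a maximal elementary $B$ is nonzero and the index $i$ below is well defined. I would extract two constraints on the profile function $h_B$. \textbf{First}, $h_B(t)\le t$ for all $t$: if $t<h_B(t)$ for some $t$, then $P^t_t=Sq(0,\cdots,0,2^t)$ (with $2^t$ in position $t$) lies in $B$ by Theorem~\ref{class of sub hopf alg}(a), while $(P^t_t)^2=P_t(2^t-1)P^0_{2t}\ne 0$ by Lemma~\ref{s<t s=t}(3) and Lemma~\ref{excess}, contradicting that $B$ is elementary. \textbf{Second}, put $i:=\min\{t\ge 1:h_B(t)\ge 1\}$; then $h_B(v)\le i$ for all $v>i$, for otherwise $i<h_B(v)$ for some $v>i$, so $P^0_i$ and $P^i_v$ both lie in $B$, whence $B$ contains $[P^0_i,P^i_v]=P^0_{i+v}$ by Lemma~\ref{s<t s=t}(5); but $P^0_{i+v}$ is a nonzero Milnor basis element, contradicting commutativity of $B$. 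Combined with $h_B(t)=0$ for $t<i$ (by definition of $i$), these say exactly that $h_B(t)\le h_{B_i}(t)$ for every $t$, hence $B\subseteq B_i$, and since $B\subseteq A$ we get $B\subseteq B_i\cap A$.

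The hard part will be the second paragraph of step (b): recognizing that elementariness of a Hopf subalgebra of $\mathcal{A}_2$ imposes precisely the inequalities $h_B(t)\le t$ and ``$h_B(v)\le i$ for $v>i$'', and reading these off from the square-zero relation and the commutator identities of Lemma~\ref{s<t s=t}. Checking that $B_i$ is elementary and comparing profile functions at the end are routine once the $P^s_t$-calculus is set up. One further point to be careful about is that a maximal elementary Hopf subalgebra of a nonzero $A$ is itself nonzero, so that the index $i$ is well defined.
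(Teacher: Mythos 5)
Your proposal is correct, and it takes a genuinely more self-contained route than the paper. The paper disposes of this lemma in three lines by citing Example A.6 of \cite{palmieri1997note}, which asserts that the $B_i$ are precisely the maximal elementary Hopf subalgebras of $\mathcal{A}_2$; from that citation it draws exactly your facts (a) and (b) and finishes by maximality, so the logical skeleton of the two arguments is identical. What you do differently is to reprove the cited classification from the Milnor-basis calculus: the necessary condition $h_B(t)\le t$ comes from $(P^t_t)^2=P_t(2^t-1)P^0_{2t}\neq 0$ (Lemma \ref{s<t s=t}(3) together with the excess argument of Lemma \ref{excess}), the condition $h_B(v)\le i$ for $v>i$ comes from the nonvanishing commutator $[P^0_i,P^i_v]=P_i(0)P^0_{i+v}=P^0_{i+v}$ (Lemma \ref{s<t s=t}(5) with $r=1$), and together with $h_B(t)=0$ for $t<i$ these give $h_B\le h_{B_i}$ pointwise, hence $B\subseteq B_i$ by the remark following Theorem \ref{class of sub hopf alg}. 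I checked the hypotheses of Lemma \ref{s<t s=t}(1), (2), (3), (5) in each of your applications and they are satisfied; your treatment of the degenerate case (a maximal elementary subalgebra of a nontrivial $A$ is nontrivial because $A$ contains some square-zero primitive $P^0_t$, so the index $i$ is defined) is also right. The trade-off is the expected one: the paper's citation is shorter and delegates the combinatorics to Palmieri, while your version makes the lemma independent of \cite{palmieri1997note} at the cost of a page of $P^s_t$-computations. The only point you leave implicit is that $h_{B_i}$ is itself a profile function, which you need before invoking Theorem \ref{class of sub hopf alg}(b) to list the generators of $B_i$; this is immediate from Theorem \ref{class of sub hopf alg}(c) because $h_{B_i}$ is non-decreasing, so the condition $h(v)\le h(u+v)$ always holds (and in fact shows $B_i$ is normal).
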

 \begin{proof}
  By example A.6. of \cite{palmieri1997note}, $B_i$ are all the maximal elementary Hopf subalgebras of $\mathcal{A}_2$. If $B\subset A\subset \mathcal{A}_2$ is an elementary subHopf algebra, then $B\subset B_i\cap A$ for some $i$. On the other hand, $B_i\cap A$ themselves are elementary Hopf subalgebra, thus if $B$ is maximal, then $B=B_i\cap A$ for some $i$.
 \end{proof}
\begin{remark}
 The maximal elementary Hopf subalgebras of $A(n)$ are exactly $\{B_i\cap A(n)\}$ where $1\leq i\leq [n/2]+1$, since $B_i\cap A(n) \subset B_{[n/2]+1}\cap A(n)$ if $i\geq [n/2]+1$ and $B_i\cap A(n)\nsubseteq B_j\cap A(n)$ if $i,j\leq [n/2]+1$ and $i\neq j$.
\end{remark}

 Let $E(n)\subset A(n)$ be the sub algebra  generated by $\{P^0_t\vert  t\leq n+1\}$. Then $E(n)$ is an exterior algebra and is a normal Hopf subalgebra of $A(n)$.
 
 \begin{lem}[Proposition 15.3.29 of \cite{margolis2011spectra}]\label{A//E}
 	There is a map $\eta:A(n-1)\to A(n)//E(n)$ which doubles degree and which is an algebra isomorphism.
 \end{lem}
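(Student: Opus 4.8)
The plan is to realize $\eta$ as the inverse of the \emph{Verschiebung} (``halving'') endomorphism of the Steenrod algebra, and to check bijectivity by a dimension count. Recall that the dual $\mathcal{A}_2^{*}=\mathbb{Z}/2[\xi_1,\xi_2,\dots]$ is a polynomial Hopf algebra, so the Frobenius $x\mapsto x^{2}$ is an endomorphism of Hopf algebras of $\mathcal{A}_2^{*}$ which doubles degrees; dualizing it and pairing against the monomial basis dual to the Milnor basis produces an endomorphism of Hopf algebras $V:\mathcal{A}_2\to\mathcal{A}_2$ with
\[
V\bigl(Sq(r_1,r_2,\dots)\bigr)=
\begin{cases}
Sq(r_1/2,r_2/2,\dots)&\text{if every }r_i\text{ is even,}\\
0&\text{otherwise,}
\end{cases}
\]
so that $V$ halves the degree of every Milnor basis element it does not annihilate. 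I will take it as known that $V$ is a well-defined morphism of Hopf algebras (see \cite{margolis2011spectra}).

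Granting this, the argument is short. First I would check, using only the profile functions $h_{A(n)}(t)=\max\{n+2-t,0\}$ and $h_{A(n-1)}(t)=\max\{n+1-t,0\}$, that $V$ maps $A(n)$ onto $A(n-1)$: if $Sq(r)\in A(n)$ has every $r_t$ even then $r_{n+1}=0$ and $r_t/2<2^{n+1-t}$ for $t\le n$, so $Sq(r_1/2,\dots,r_n/2)\in A(n-1)$, and conversely $Sq(s_1,\dots,s_n)\in A(n-1)$ is the image under $V$ of $Sq(2s_1,\dots,2s_n)\in A(n)$. Next, every Milnor basis element of $E(n)^{+}$ has some $r_t=1$ (the profile function of $E(n)$ is $h_{E(n)}(t)=1$ for $t\le n+1$ and $0$ afterwards), hence is killed by $V$; multiplicativity then gives $V\bigl(A(n)E(n)^{+}\bigr)=V(A(n))\,V(E(n)^{+})=0$, so $V|_{A(n)}$ descends to a surjective morphism $\overline{V}:A(n)//E(n)\to A(n-1)$. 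Finally I would compare dimensions: Theorem \ref{graded lagrange rewrite} gives $\dim A(n)//E(n)=\dim A(n)/\dim E(n)=2^{(n+1)(n+2)/2}/2^{n+1}=2^{n(n+1)/2}=\dim A(n-1)$, so $\overline{V}$ is an isomorphism; since $V$, and hence $\overline{V}$, halves degrees, $\eta:=\overline{V}^{-1}$ is an isomorphism of Hopf algebras, in particular of algebras, which doubles degrees.

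The only step that is not routine bookkeeping is the input about $V$: that halving is a well-defined multiplicative operation on $\mathcal{A}_2$ with the stated effect on the Milnor basis. This is cleanest seen on the dual side, as the Frobenius on $\mathcal{A}_2^{*}$, and is the sole place where the structure theory of the dual Steenrod algebra is needed; verifying multiplicativity of $V$ directly from the matrix description of the product in $\mathcal{A}_2$ would be the laborious alternative. Indeed, the entire proof can be carried out dually: the inclusion $E(n)\subset A(n)$ with quotient $A(n)//E(n)$ dualizes to an extension exhibiting $\bigl(A(n)//E(n)\bigr)^{*}$ as the sub-Hopf-algebra of $A(n)^{*}=\mathbb{Z}/2[\xi_1,\dots,\xi_{n+1}]/(\xi_t^{2^{n+2-t}})$ generated by $\xi_1^{2},\dots,\xi_n^{2}$, and $\xi_t\mapsto\xi_t^{2}$ identifies that subalgebra with $A(n-1)^{*}=\mathbb{Z}/2[\xi_1,\dots,\xi_n]/(\xi_t^{2^{n+1-t}})$; dualizing back yields $\eta$. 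In that version the delicate point is keeping track of which powers of the $\xi_t$ survive in each quotient.
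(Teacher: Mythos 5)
Your argument is correct and complete: the Verschiebung $V$ (dual to the Frobenius on $\mathcal{A}_2^{*}$) is indeed a degree-halving algebra endomorphism killing $E(n)^{+}$, and the dimension count $\dim A(n)//E(n)=2^{n(n+1)/2}=\dim A(n-1)$ closes the argument. The paper itself offers no proof but cites Proposition 15.3.29 of \cite{margolis2011spectra}, whose proof is essentially this same dual-Frobenius/doubling argument, so your route matches the intended one.
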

In general a map (resp. isomorphism) of algebras induces a functor (resp. isomorphism) of the module categories. Here $\eta$ introduces a small technicality.
\begin{prop}[Proposition 15.3.30 of \cite{margolis2011spectra}]\label{A//E 2}
	$\eta$ induces an isomorphism $$\eta^*: gr(A(n)//E(n))^{ev}\to grA(n-1)$$
\end{prop}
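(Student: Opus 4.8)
The plan is to realize $\eta^*$ concretely as a ``halving of degrees'' functor and to exhibit an explicit inverse ``doubling'' functor, so that $\eta^*$ is literally an isomorphism of categories. The first observation is that, since $\eta\colon A(n-1)\to A(n)//E(n)$ of Lemma~\ref{A//E} is an algebra isomorphism which doubles degrees, its image --- namely all of $A(n)//E(n)$ --- is concentrated in even degrees, i.e.\ $(A(n)//E(n))_j=0$ whenever $j$ is odd. Consequently the action of $A(n)//E(n)$ on any graded module preserves the parity of degrees; this makes $gr(A(n)//E(n))^{ev}$ --- the full subcategory of modules $M$ with $M_j=0$ for $j$ odd --- closed under submodules, quotients and direct sums, and shows that the even part $M^{ev}=\bigoplus_j M_{2j}$ of an arbitrary module is a submodule. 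It is precisely this even subcategory that can match $grA(n-1)$, since a module obtained from an $A(n-1)$-module by doubling degrees is necessarily even-concentrated.

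Next I would define $\eta^*$ on objects: for $M\in gr(A(n)//E(n))^{ev}$, set $(\eta^*M)_j:=M_{2j}$, and let $x\in A(n-1)_d$ act on $(\eta^*M)_j=M_{2j}$ through $\eta(x)\in(A(n)//E(n))_{2d}$, which carries $M_{2j}$ into $M_{2j+2d}=(\eta^*M)_{j+d}$. Because $\eta$ is an algebra homomorphism this is a module structure, it is homogeneous of degree $0$ by construction, and $\eta^*M$ is finitely generated over $A(n-1)$ whenever $M$ is finitely generated over $A(n)//E(n)$, since a finite generating set of $M$ (which may be taken homogeneous, hence sitting in even degrees) maps to a finite generating set of $\eta^*M$. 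On morphisms, a degree-preserving $A(n)//E(n)$-homomorphism $f\colon M\to N$ of even-concentrated modules is sent to the same underlying $k$-linear map, viewed as $\eta^*f\colon\eta^*M\to\eta^*N$; it is degree-preserving for the halved gradings and $A(n-1)$-linear, the latter because $x\cdot f(m)=\eta(x)f(m)=f(\eta(x)m)=f(x\cdot m)$.

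Then I would construct the inverse functor $D\colon grA(n-1)\to gr(A(n)//E(n))^{ev}$ by doubling: put $(DN)_{2j}:=N_j$ and $(DN)_{2j+1}:=0$, and let $\eta(x)\in(A(n)//E(n))_{2d}$ act on $(DN)_{2j}=N_j$ exactly as $x$ does, landing in $N_{j+d}=(DN)_{2j+2d}$; since $\eta$ is a bijection of algebras this defines an $A(n)//E(n)$-module structure without ambiguity, and $D$ is again the identity on underlying maps. One checks directly that $\eta^*\circ D=\mathrm{id}_{grA(n-1)}$ and $D\circ\eta^*=\mathrm{id}_{gr(A(n)//E(n))^{ev}}$: each composite leaves the underlying vector space and underlying linear maps unchanged and, using that the modules involved are already graded in the appropriate way (for $D\circ\eta^*$ this is where even-concentratedness is used), returns precisely the original grading. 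Hence $\eta^*$ is an isomorphism of categories.

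The argument is mostly bookkeeping, and I expect no serious obstacle. The one point that is not purely formal is the initial observation that $A(n)//E(n)$ lives in even degrees, a consequence of Lemma~\ref{A//E}: this is what guarantees that halving and doubling are mutually inverse operations on the relevant subcategory and that restricting to $gr(A(n)//E(n))^{ev}$ loses nothing. The remaining care is simply in tracking the factor of two through the action map and through the finite-generation condition.
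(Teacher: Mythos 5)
Your argument is correct: the key point is exactly the one you isolate, namely that $A(n)//E(n)$ is concentrated in even degrees because $\eta$ is a degree-doubling algebra isomorphism, after which the halving functor and the doubling functor are manifestly mutually inverse isomorphisms of categories. The paper itself gives no proof here --- it simply cites Proposition 15.3.30 of \cite{margolis2011spectra} --- and your explicit construction is essentially the standard argument underlying that citation, so there is nothing to object to.
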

 Here $gr(A(n)//E(n))^{ev}$ denotes the full subcategory of $gr(A(n)//E(n))$ of modules concentrated in even degree.

 \section{Reduction to Hopf subalgebras}\label{main}
 
In this section we hope to reduce the calculation of $\mathbb{T}(H)$ to the calculation of $\mathbb{T}(E)$, where $E$ is some proper Hopf subalgebra of $H$ (Theorem \ref{3.2 steenrod re}). Then in some sense we can do an induction (see next section). This section is devoted to prove Theorem \ref{3.2 steenrod re}. \bigskip
 
 Recall that by assumption $H$ is a Poincare algebra and thus $H\cong \sigma(n) H^*$. Namely, $H_0\cong H_n\cong k$.  We denote the unique basis (up to a nonzero scale multiplication) of the $1$-dimensional vector space $H_n$ as $t^H_1$.
 
 \begin{defi}
 	Let $M$ be an $H$-module and $Z$ be a normal Hopf subalgebra of $H$. Define
 	$$M^Z:=\{x\in M\vert  hx=\epsilon(h)x, \forall h\in Z\}, \quad M^Z_1:=\{t^Z_1\cdot m\vert  \forall m\in M\}.$$ 
 	$M^Z$ is called the (left) invariant $Z$-submodule of $M$.
 \end{defi}
\begin{remark}
 $M^Z$ is a well defined $H//Z$-module where
$\alpha\cdot m= h\cdot m,\quad \forall \alpha\in H//Z, m\in M^Z,$ 
and $h$ is an arbitrary preimage of $\alpha $ of the quotient map $\pi:H\to H//Z$.
\end{remark}

It's easy to verify that $Hom_H(k,M)\cong \{x\in M\vert  hx=\epsilon(h)x, \forall h\in H\}$ as graded vector spaces. Notice that $H$ itself is an $H$-module and we can also consider the invariant $H$-submodule of $H$.
 
 \begin{lem}
 Suppose $\vert H\vert =n$, then $H^H=H_n$
 \end{lem}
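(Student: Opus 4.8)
The plan is to prove equality by a two-sided inclusion, using the Poincaré (Frobenius) structure of $H$ supplied by the earlier results. The inclusion $H_n\subseteq H^H$ is the easy half: since $H$ is connected and $|H|=n$, we have $H_iH_j\subseteq H_{i+j}$ and $H_m=0$ for $m>n$, so for any $h\in\ker\epsilon$ (which is concentrated in positive degrees) and any $x\in H_n$ the product $hx$ lies in degree $>n$, hence $hx=0=\epsilon(h)x$. Combined with $1\cdot x=x$ this shows every element of $H_n$ is $H$-invariant, so $H_n\subseteq H^H$.

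For the reverse inclusion $H^H\subseteq H_n$, first reduce to homogeneous elements: $H^H$ is a graded subspace (the invariance condition is degree-wise), so it suffices to show $H^H\cap H_q=0$ for every $q<n$. Fix a nonzero homogeneous $x\in H_q$ with $q<n$. By Theorem~\ref{proj=free} applied to the free (equivalently injective) rank-one module $H$ — or more directly by the non-singularity of the Poincaré pairing $H_q\otimes H_{n-q}\xrightarrow{\triangledown}H_n\xrightarrow{e}k$ — there exists $y\in H_{n-q}$ with $e(yx)\neq 0$, in particular $yx\neq 0$ in $H_n$. Since $q<n$ we have $n-q\geq 1$, so $y\in\ker\epsilon$ and we may write $y$ as a sum of products of elements of $\ker\epsilon$; picking out a homogeneous piece, some $h\in\ker\epsilon$ satisfies $hx\neq 0$ while $\epsilon(h)=0$. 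Thus $x\notin H^H$. Hence $H^H\subseteq\bigoplus_{q\ge n}H_q=H_n$, and the two inclusions give $H^H=H_n$.

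The only point requiring care — the main obstacle — is the existence of $h\in\ker\epsilon$ with $hx\ne0$ for a given $0\ne x\in H_q$, $q<n$; this is exactly where connectedness and the Poincaré-duality isomorphism $H\cong\sigma(n)H^*$ of Theorem~12.2.5 \& 12.2.9 of \cite{margolis2011spectra} enter. One can phrase it cleanly as: the pairing $\triangledown\colon H_q\otimes H_{n-q}\to H_n\cong k$ is perfect, so no nonzero element of $H_q$ is annihilated by all of $H_{n-q}$; since $H_{n-q}\subseteq\ker\epsilon$ when $q<n$, the claim follows. Everything else is bookkeeping with the grading.
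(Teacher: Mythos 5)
Your proof is correct and follows essentially the same route as the paper: both directions rest on the non-singularity of the Poincaré pairing $H_q\otimes H_{n-q}\to H_n$ to rule out invariant elements below the top degree, and on the grading to show $H_n$ is killed by $\ker\epsilon$. The only cosmetic difference is that you first observe $H^H$ is a graded subspace and argue on homogeneous elements, whereas the paper takes a general element and pairs against its lowest-degree nonzero component; these are interchangeable.
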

 \begin{proof}
 	Let $x=\sum_{i=0}^{n}x_i\in H^H$, with $x_i\in H_i$.  And let $x_j$ be the first nonzero summand. Then since $H$ is a Poincare algebra, there is $y_{n-j}\in H_{n-j}$ such that $y_{n-j}x_j=t^H_1\neq0$. Therefore, $y_{n-j}x=t^H_1$ for grading reason. On the other hand, since $x\in H^H$, $y_{n-j}x=\epsilon(y_{n-j})x$, which leads to $\epsilon(y_{n-j})x=t^H_1$. This is true only if $x=at^H_1$ and $y_0=1/a$ for some nonzero $a\in k$. Thus  $H^H\subset H_n$. 
 	By definition, $\forall h\in H^+$, $\vert ht^H_1\vert >n$, hence $ht^H_1=0=\epsilon(h)t^h_1$. Therefore,  $H_n\subset H^H$. 
 \end{proof} 
 
 By the same argument, $(\sigma(m)H)^H=\sigma(m)H_n$, the top degree  of $\sigma(m)H$. Another observation is that $(\sigma(m)H)^H_1=\sigma(m)H_n$ , since $t^H_1\cdot \sigma(m)H_i\subset \sigma(m)H_{n+i}=0$ for $i>0$ and $t^H_1\sigma(m)H_0=\sigma(m)H_n$.
 
 In general, $M^H_1$ and $M^H$ are not equal. However, this is true when $M$ is free.
 \begin{lem}\label{4.6 rewrite}
 	When $M$ is a free $H$ module,  $M^H=M^H_1$.
 \end{lem}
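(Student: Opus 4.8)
The plan is to reduce to the case of a single shifted copy of $H$ and then quote the two computations made immediately before the lemma. First I would note that the inclusion $M^H_1\subseteq M^H$ holds for \emph{every} $H$-module $M$: given $m\in M$ and $h\in H^+$, we have $\vert ht^H_1\vert >n$ since $\vert t^H_1\vert =n=\vert H\vert$ and $\vert h\vert\geq 1$, so $h t^H_1=0$; hence $h\cdot(t^H_1 m)=(h t^H_1)m=0=\epsilon(h)\,t^H_1 m$, while the identity acts trivially. Thus $t^H_1 m\in M^H$, and the actual content of the lemma is the reverse inclusion $M^H\subseteq M^H_1$ under the freeness hypothesis.

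Next I would use that a free module in $grH$ is a (finite) direct sum $M\cong\bigoplus_\alpha \sigma(m_\alpha)H$ of shifted copies of $H$, together with the fact that both constructions $(-)^H$ and $(-)^H_1$ commute with such direct sums. For $(-)^H$ this is immediate, since the defining condition $hx=\epsilon(h)x$ is checked coordinatewise. For $(-)^H_1$ it follows because an element of $\bigoplus_\alpha\sigma(m_\alpha)H$ is a tuple with finitely many nonzero entries, so $t^H_1\cdot(x_\alpha)_\alpha=(t^H_1 x_\alpha)_\alpha$, which identifies $M^H_1$ with $\bigoplus_\alpha(\sigma(m_\alpha)H)^H_1$. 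Therefore it suffices to prove the equality for $M=\sigma(m)H$.

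Finally, for $M=\sigma(m)H$ I would simply invoke the two facts recorded just above the lemma: $(\sigma(m)H)^H=\sigma(m)H_n$ (the top degree of $\sigma(m)H$) and $(\sigma(m)H)^H_1=\sigma(m)H_n$, using that $t^H_1\cdot\sigma(m)H_i\subset\sigma(m)H_{n+i}=0$ for $i>0$ while $t^H_1\sigma(m)H_0=\sigma(m)H_n$. These coincide, which completes the reduction and hence the proof.

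I do not anticipate any serious obstacle here: the structure of $H$ as a Poincaré algebra recalled earlier does all the work. The only point requiring a little care is the interchange of $(-)^H_1$ with direct sums, which is why I would keep attention on finitely generated — equivalently finite — free modules, where the tuples involved are automatically finite and the argument is entirely transparent.
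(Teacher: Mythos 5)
Your proof is correct and follows essentially the same route as the paper: establish $M^H_1\subseteq M^H$ by the degree argument, then decompose the free module into shifted copies of $H$ and use the computations $(\sigma(m)H)^H=\sigma(m)H_n=(\sigma(m)H)^H_1$. The only cosmetic difference is that the paper closes the argument by comparing dimensions of $M^H$ and $M^H_1$ rather than by noting that both constructions commute with direct sums, but the content is identical.
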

 \begin{proof}
 	Suppose $\vert H\vert =n$.
 	$\forall h\in H,  m\in M$, $h\cdot t^H_1m=\epsilon(h)t^H_1m$ by definition, thus $M^H_1\subset M^H$.
 	
 	On the other hand, suppose $\{m_1,\cdots m_s\}$ is an $H$-basis for $M$ as free module, then $\{t^H_1(m_1),\cdots,t^H_1(m_s)\}$ is a basis for $M^H_1$ as vector space, since  
 	$t^H_1\sigma(m)H=\sigma(m)H_n$
 	$M^H\cong Hom_H(k,M)\cong Hom_H(k,\oplus _{i=1}^s \sigma(\alpha_i)H)\cong \oplus_{i=1}^s (\sigma (\alpha_i)H)^H\cong \oplus_{i=1}^s \sigma(\alpha_i)H_n=  \oplus_{i=1}^s \sigma(\alpha_{i+n})k.$
 	Thus $M^H=M^H_1$ since they have same dimension as vector spaces over $k$.
 \end{proof}

The element $t^H_1$ can be used to detect the freeness of  an $H$-module. 
\begin{lem}[Lemma 12.2.6 of \cite{margolis2011spectra}] \label{pre4.9 re}
	If $M$ is an  $H$-module, then multiplication by $t^H_1$ induces a map $f:k\otimes_HM\to M$ (of degree $\vert t^H_1\vert $). Furthermore, $f$ is a monomorphism if and only if $M$ is free.  
\end{lem}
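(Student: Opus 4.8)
The plan is to dispose of well-definedness first and then treat the two implications in turn, leaning on the structural facts already assembled in this section.

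First I would check that $f$ is well defined. For $h\in H^+$ and $m\in M$ one has $|t^H_1 h| = |H|+|h| > |H|$, hence $t^H_1 h = 0$; therefore $t^H_1\cdot(H^+M)=0$, so multiplication by $t^H_1$ descends to a degree-$|t^H_1|$ map $f\colon k\otimes_H M = M/H^+M \to M$, whose image is exactly $M^H_1 = t^H_1 M$. For the implication ``$M$ free $\Rightarrow f$ monomorphism'' I would write $M\cong\bigoplus_{i=1}^{s}\sigma(\alpha_i)H$ with free generators $e_i$; then the classes $\overline{e_i}$ form a $k$-basis of $k\otimes_H M$, and $f(\overline{e_i})=t^H_1 e_i$ is a nonzero element of the summand $\sigma(\alpha_i)H$ (it spans its top degree), so the elements $f(\overline{e_i})$ are linearly independent in $M$ and $f$ is injective.

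The content is in the converse. Assume $f=f_M$ is injective. I would pick a minimal generating set $m_1,\dots,m_s$ of $M$ (graded Nakayama): homogeneous elements whose classes form a $k$-basis of $k\otimes_H M$. This gives a surjection $\phi\colon F:=\bigoplus_{i=1}^{s}\sigma(\alpha_i)H \twoheadrightarrow M$, $e_i\mapsto m_i$ with $\alpha_i=|m_i|$, for which the induced map $\overline\phi\colon k\otimes_H F \to k\otimes_H M$ is an isomorphism. The square relating the two copies of ``multiply by $t^H_1$'' commutes, $\phi\circ f_F = f_M\circ\overline\phi$, since both composites send $\overline{e_i}$ to $t^H_1 m_i$. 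As $\overline\phi$ is an isomorphism and $f_M$ a monomorphism, $\phi\circ f_F$ is a monomorphism, i.e.\ $\ker\phi\cap\operatorname{im}(f_F)=0$. Now $F$ is free, so by Lemma \ref{4.6 rewrite} we have $\operatorname{im}(f_F)=F^H_1=F^H=\{x\in F : H^+x=0\}$, which is precisely the socle of $F$ (the only simple graded $H$-module is $\underline k$, by graded Nakayama). Hence $\ker\phi$ is a finite-dimensional $H$-module with $\operatorname{socle}(\ker\phi)=\ker\phi\cap\operatorname{socle}(F)=0$; since a nonzero finite-dimensional module has nonzero socle, $\ker\phi=0$, so $\phi$ is an isomorphism and $M\cong F$ is free.

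I expect the only genuine obstacle to be the identification $\operatorname{im}(f_F)=\operatorname{socle}(F)$ for free $F$ — the place where the Poincar\'e-algebra structure of $H$ really enters, via $H^H=H_{|H|}$ together with Lemma \ref{4.6 rewrite} — with everything else being bookkeeping. One might instead hope to use ``free $=$ injective over a Poincar\'e algebra'' (Theorem \ref{proj=free}) to split the short exact sequence $0\to\ker\phi\to F\to M\to 0$, but that would require $\ker\phi$ itself to be injective, which is not automatic, so the socle argument seems the cleanest route.
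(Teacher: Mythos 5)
The paper does not prove this lemma at all---it is imported verbatim as Lemma 12.2.6 of \cite{margolis2011spectra}---so there is no in-paper argument to compare against. Your proof is correct and is essentially the standard one: well-definedness from $t^H_1H^+=0$ by degree reasons, the easy direction by evaluating on free generators, and the converse via a minimal free cover $\phi\colon F\twoheadrightarrow M$ together with the identification $\operatorname{im}(f_F)=F^H_1=F^H=\operatorname{socle}(F)$ (Lemma \ref{4.6 rewrite}, plus the fact that $H^+$ is the graded radical), so that $\ker\phi$ has zero socle and must vanish. The only thing worth flagging is that the Nakayama/minimal-cover step and the ``nonzero module has nonzero socle'' step use that $M$ is finitely generated (hence finite-dimensional, as $H$ is finite); this is covered by the paper's standing convention that objects of $grH$ are finitely generated, but the lemma as literally stated says only ``an $H$-module,'' so a one-line remark to that effect would make the argument airtight.
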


\begin{lem}\label{4.9 re}
	Let $M$ be an $H$-module. Suppose $ t^H_1m_1,t^H_1m_2, \cdots,t^H_1m_n$  are  $k$-linearly independent. Then $m_1,m_2,\cdots, m_n$ generate a free  $H$-submodule $L\subset M$.
\end{lem}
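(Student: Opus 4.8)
The plan is to show that the $H$-submodule $L:=Hm_1+\cdots+Hm_n\subseteq M$ generated by $m_1,\dots,m_n$ is free with basis $m_1,\dots,m_n$, and the tool is Lemma~\ref{pre4.9 re} applied to $L$: it suffices to prove that the map $f_L\colon k\otimes_H L\to L$ induced by multiplication by $t^H_1$ is a monomorphism, since that lemma then forces $L$ to be free.

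First I would observe that $k\otimes_H L\cong L/H^+L$ is spanned by the images $\overline m_1,\dots,\overline m_n$ of the generators, so $\dim_k(k\otimes_H L)\le n$. Next, because $t^H_1$ lies in the top degree $\vert H\vert$ of $H$, we have $t^H_1 h=0$ for every $h\in H^+$ for degree reasons; this is exactly what makes multiplication by $t^H_1$ descend to $k\otimes_H L$ (the content of Lemma~\ref{pre4.9 re}), and it gives $f_L(\overline m_i)=t^H_1 m_i$. Reading the hypothesis backwards, the $k$-linear independence of $t^H_1 m_1,\dots,t^H_1 m_n$ shows that $\overline m_1,\dots,\overline m_n$ are linearly independent in $k\otimes_H L$; hence they form a basis, $\dim_k(k\otimes_H L)=n$, and $f_L$ carries this basis to a linearly independent set, so $f_L$ is injective.

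By Lemma~\ref{pre4.9 re} applied to $M=L$, injectivity of $f_L$ yields that $L$ is free; being free with $\dim_k(k\otimes_H L)=n$, it has rank $n$, and since $m_1,\dots,m_n$ generate $L$ and project to a basis of $L/H^+L$, they form a free $H$-basis of $L$ by the standard lifting argument over the graded-connected (hence local) ring $H$, i.e.\ the transition matrix to any chosen free basis is invertible mod $H^+$ and therefore invertible. I do not expect a real obstacle: the only mildly delicate point is the well-definedness of $f_L$ together with the formula $f_L(\overline m_i)=t^H_1 m_i$, and both are supplied by Lemma~\ref{pre4.9 re} and the vanishing $t^H_1 H^+=0$. (As an alternative one could build the surjection $\phi\colon\bigoplus_i\sigma(\vert m_i\vert)H\twoheadrightarrow L$, note that $\phi$ is injective on the socle $\bigoplus_i\sigma(\vert m_i\vert)H_{\vert H\vert}$ because that image is spanned by the $t^H_1 m_i$, and conclude $\ker\phi=0$ since a nonzero submodule of a free module over a Poincare algebra meets the socle; but the route through Lemma~\ref{pre4.9 re} is shorter.)
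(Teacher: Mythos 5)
Your proposal is correct and follows essentially the same route as the paper: both restrict the multiplication-by-$t^H_1$ map of Lemma~\ref{pre4.9 re} to $L=Hm_1+\cdots+Hm_n$, note that $k\otimes_H L$ is spanned by the classes of the $m_i$, deduce injectivity from the linear independence of the $t^H_1m_i$, and conclude freeness from that lemma. The extra remarks about rank and the lifted basis are fine but not needed, since the statement only asserts that $L$ is free.
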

\begin{proof}
	By assumption,  $k\otimes_H L$ is a $k$ vector space spanned by $1\otimes m_1,1\otimes m_2,\cdots, 1\otimes m_n$. Consider the map $f:k\otimes_HL\to L$ defined in Lemma\ref{pre4.9 re}:
	$$f(\sum_{i=1}^{n}a_i\otimes m_i)=\sum_{i=1}^{n}a_it^H_1m_i$$ 
	where $a_i\in k$. Since $ t^H_1m_1,\cdots, t^H_1m_n$  are $k$-linearly independent, $f$ is injective.
	
 By Lemma \ref{pre4.9 re}, we are done.
\end{proof}

In fact,  $t^H_1$ is just the integral of the Hopf algebra $H$ (\cite{montgomery1993hopf}), and it has some kind of transitivity:
 \begin{lem}\label{4.7 write}
 	Suppose  $Z \subset H$ is a normal Hopf subalgebra. Denote the generator  of $Z_{\vert Z\vert }$ and $(H//Z)_{\vert H//Z\vert }$ as $t^Z_1$ and $\overline {t^H_Z}$ respectively. Then $t^H_Zt^Z_1$ is the generator of $H_{\vert H\vert }$ where $t^H_Z$ is a preimage of  $\overline {t^H_Z}$ under the quotient map.
 \end{lem}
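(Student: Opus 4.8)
The plan is to reduce the statement to the single assertion $t^H_Z\, t^Z_1\neq 0$. Indeed, taking $t^H_Z$ homogeneous of degree $\vert H//Z\vert$ (the quotient map $\pi: H\to H//Z$ is graded and onto), the product $t^H_Z\, t^Z_1$ is homogeneous of degree $\vert H//Z\vert+\vert Z\vert$; so once we know this equals $\vert H\vert$ and that $\dim_k H_{\vert H\vert}=1$, any nonzero element there \emph{is} a generator. Both facts follow from Theorem \ref{graded lagrange rewrite} combined with the quoted Poincaré-duality statements: applied to $Z\subset H$ it gives $H\cong Z\otimes(H/Z^+H)$ as graded vector spaces, and $Z^+H=HZ^+$ since $Z$ is normal, so $H/Z^+H=H//Z$. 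As $Z$ and $H//Z$ are themselves finite graded connected Hopf algebras, hence Poincaré, their top pieces are one-dimensional, spanned by $t^Z_1$ and $\overline{t^H_Z}$; reading off the top degree of the tensor product yields $\vert H\vert=\vert Z\vert+\vert H//Z\vert$ and $\dim_k H_{\vert H\vert}=\dim_k Z_{\vert Z\vert}\cdot\dim_k(H//Z)_{\vert H//Z\vert}=1$.

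For the nonvanishing I would exploit that $H$ is \emph{free as a right $Z$-module} (Theorem \ref{graded lagrange rewrite} again; the right-handed version follows from the stated left-handed one by passing to $H^{\mathrm{op}}$, or by the same proof, and $H\otimes_Z k=H/HZ^+=H//Z$ by normality). Pick a homogeneous $k$-basis of $H//Z$ containing $\overline{t^H_Z}$ and lift it to a homogeneous family $\{f_j\}\subset H$ with $\pi(f_j)$ running over that basis and $\pi(f_0)=\overline{t^H_Z}$; a graded Nakayama argument shows $\{f_j\}$ is a free right $Z$-basis of $H$, so $H=\bigoplus_j f_j Z$ with each $f_j Z$ free of rank one. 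In particular $z\mapsto f_0 z$ is injective, hence $f_0\, t^Z_1\neq 0$ because $t^Z_1\neq 0$; and $f_0\, t^Z_1$ has degree $\vert H//Z\vert+\vert Z\vert=\vert H\vert$, so it spans the line $H_{\vert H\vert}$, i.e.\ it is a generator of $H_{\vert H\vert}$.

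It remains to see that the conclusion is insensitive to the choice of preimage. If $t^H_Z$ is any preimage of $\overline{t^H_Z}$ under $\pi$, then $t^H_Z-f_0\in\ker\pi=HZ^+$, so $(t^H_Z-f_0)\,t^Z_1\in H\cdot(Z^+ t^Z_1)$; but $Z^+ t^Z_1\subset Z_{>\vert Z\vert}=0$ for degree reasons, whence $t^H_Z\, t^Z_1=f_0\, t^Z_1$, a generator of $H_{\vert H\vert}$ by the previous paragraph. The step I expect to take the most care is the middle one — deducing the right-module form of Theorem \ref{graded lagrange rewrite} and checking that a homogeneous lift of a $k$-basis of $H//Z$ is a free right $Z$-basis, so that the lift of $\overline{t^H_Z}$ lies in a free rank-one summand; everything else is degree bookkeeping.
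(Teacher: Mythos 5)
Your proof is correct, but the crucial nonvanishing step is handled by a genuinely different mechanism than the paper's. You obtain $f_0\,t^Z_1\neq 0$ from the right-module form of Theorem \ref{graded lagrange rewrite}: a homogeneous lift of a $k$-basis of $H//Z$ is a free right $Z$-basis of $H$, so the rank-one summand $f_0Z$ meets $t^Z_1$ injectively. The paper instead runs the argument in the opposite direction using Poincar\'e duality of $H$ itself: non-singularity of the pairing $H_{\vert H\vert-\vert Z\vert}\otimes H_{\vert Z\vert}\to H_{\vert H\vert}$ produces \emph{some} $x$ of degree $\vert H\vert-\vert Z\vert=\vert H//Z\vert$ with $xt^Z_1=t^H_1\neq 0$, and then the identity $HZ^+\cdot t^Z_1=0$ (the same observation you use for independence of the lift) forces $\pi(x)\neq 0$, hence $\pi(x)=\overline{t^H_Z}$ up to scalar because $(H//Z)_{\vert H//Z\vert}$ is one-dimensional. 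The two arguments share the same skeleton — the degree bookkeeping $\vert H\vert=\vert Z\vert+\vert H//Z\vert$ and the insensitivity to the choice of preimage via $Z^+t^Z_1=0$ — and differ only in how the one good lift is produced. The paper's route stays entirely within results it has already quoted (Poincar\'e duality and the left-module statement of Theorem \ref{graded lagrange rewrite}), whereas yours needs the right-handed version of the freeness theorem plus a graded Nakayama argument to certify that the lifted family is actually a free basis; that is standard and you flag it correctly, but it is additional machinery the paper avoids.
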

 \begin{proof}
 	Show that $t^H_Zt^Z_1$ is well-defined first:
 	$$\forall a \in HZ^+,\ \ (t^H_Z+a)t^Z_1=t^H_Zt^Z_1+ hzt^Z_1=t^H_Zt^Z_1$$  
 	where $a=hz$ for some $h\in H$ and $z\in Z^+$.
 	
 	Recall that $H$ is a Poincare algebra. By definition, $\exists x\in H_{\vert H\vert -\vert Z\vert }$ such that $xt^Z_1\neq 0$. By Theorem \ref{graded lagrange rewrite}, $x\in H_{\vert H//Z\vert }$. Since $H_{\vert H\vert }=k$, is a one dimensional vector space generated by $t^H_1$, we can assume that $$xt^Z_1=t^H_1$$ after a scalar multiplication if necessary. 
 	
 	Now that $xt^Z_1\neq 0$, one has $x\notin HZ^+$ and hence the quotient image of $x$ in $H//Z$ is nonzero. Therefore $x$ is a preimage of $\overline{t^H_Z}$ since $\vert x\vert =\vert H//Z\vert $ and $(H//Z)_{\vert H//Z\vert }=k$, a one dimensional vector space.
 \end{proof}    

Thanks to the transitivity, we have the following two technical lemmas: 
\begin{lem}\label{4.8 rerite}
	Let $Z \subset H$ be a normal Hopf subalgebra. Suppose $M$ is a finitely generated $H$-module such that $M^Z_1$ is a free  $H//Z$ module, and $M\downarrow^H_Z\cong k\oplus (free)$. Then $M\cong k\oplus (free)$ as $H$-modules.
\end{lem}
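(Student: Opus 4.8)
The plan is to carve out of $M$ a free $H$-submodule $L$ of the largest possible rank, show that the quotient $M/L$ is one-dimensional, split it off using injectivity of free modules, and finally check that this one-dimensional complement sits in degree $0$, so that it is $k$ rather than a shift of it (throughout, $Z$ is proper and nontrivial). First I would record the relevant dimensions. Writing $M\downarrow^H_Z\cong k\oplus F_Z$ with $F_Z$ free of rank $s$ over the Poincare algebra $Z$: since $t^Z_1\in Z^+$ it annihilates the trivial summand, and by Lemma \ref{4.6 rewrite} applied to $Z$ one has $t^Z_1F_Z=(F_Z)^Z$, of dimension $s$; hence $M^Z_1=t^Z_1M=(F_Z)^Z$ has dimension $s$. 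The hypothesis that $M^Z_1$ is free of rank $r$ over $H//Z$ then gives $s=r\dim(H//Z)$, and combined with $\dim H=\dim Z\cdot\dim(H//Z)$ (Theorem \ref{graded lagrange rewrite}) this yields $\dim M=1+s\dim Z=1+r\dim H$.

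Next I would build $L$. Choose a free $H//Z$-basis $\bar w_1,\dots,\bar w_r$ of $M^Z_1$ and write $\bar w_j=t^Z_1m_j$ with $m_j\in M$. The transitivity of the integral (Lemma \ref{4.7 write}) gives $t^H_1m_j=t^H_Z\,t^Z_1m_j=\overline{t^H_Z}\cdot\bar w_j$, where the last product is computed in the $H//Z$-module $M^Z$; since $\bar w_1,\dots,\bar w_r$ is a free $H//Z$-basis, Lemma \ref{4.6 rewrite} (now applied to $H//Z$) identifies the $\overline{t^H_Z}\bar w_j$ as the top classes of the free summands of $M^Z_1$, so they are $k$-linearly independent. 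Hence $t^H_1m_1,\dots,t^H_1m_r$ are linearly independent in $M$, and Lemma \ref{4.9 re} produces a free $H$-submodule $L\subset M$ with basis $m_1,\dots,m_r$, of dimension $r\dim H$. Since $L$ is free it is injective by Theorem \ref{proj=free}, so the inclusion splits: $M\cong L\oplus Q$ with $\dim Q=\dim M-\dim L=1$.

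It remains to normalize the degree of $Q$. Being concentrated in a single degree $d$ over the connected algebra $H$, the module $Q$ is killed by $H^+$, so $Q\cong\sigma(d)k$. Restricting $M\cong L\oplus Q$ to $Z$ and using that a free $H$-module is free over $Z$ (Theorem \ref{graded lagrange rewrite}), one gets $M\downarrow^H_Z\cong(\text{free }Z\text{-module})\oplus\sigma(d)k$; comparing this with the hypothesis $M\downarrow^H_Z\cong(\text{free }Z\text{-module})\oplus k$ via the Krull--Schmidt theorem (the shifted copies of $Z$ being indecomposable, while the only one-dimensional indecomposable summands occurring are $\sigma(d)k$ and $k$) forces $d=0$. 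Hence $Q\cong k$ and $M\cong k\oplus L$ with $L$ free, which is the assertion.

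The step I expect to be the main obstacle is the construction of $L$: one needs not merely that $M^Z_1$ is abstractly free, but that it is a free $H//Z$-\emph{submodule} of $M^Z$, so that the identity $t^H_1m_j=\overline{t^H_Z}\cdot\bar w_j$ genuinely takes place inside $M^Z$ and its freeness can then be fed into the linear-independence argument for the $t^H_1m_j$; the transitivity of the integral (Lemma \ref{4.7 write}) is precisely what couples the $H$-module structure on $M$ to the $H//Z$-module structure on $M^Z$. A smaller but genuine subtlety is the degree bookkeeping in the last step: without the restriction-to-$Z$ argument one obtains only $M\cong\sigma(d)k\oplus(\text{free})$ for an unidentified $d$, which is weaker than what is claimed.
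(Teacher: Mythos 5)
Your proof is correct and follows essentially the same route as the paper: lift a free $H//Z$-basis of $M^Z_1$ to elements $m_j\in M$, use the transitivity of the integral (Lemma \ref{4.7 write}) together with Lemma \ref{4.6 rewrite} and Lemma \ref{4.9 re} to produce a free $H$-submodule $L$, and split it off by injectivity of free modules. The only differences are cosmetic: you count dimensions upfront (via $s=r\dim(H//Z)$) where the paper compares $L^Z$ with $F^Z$, and you add an explicit Krull--Schmidt argument pinning the one-dimensional complement to degree $0$, a point the paper leaves implicit.
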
                                                   \begin{proof}
	Let $t^Z_1m_1,\cdots, t^Z_1m_n$ be a basis of $M^Z_1$ as free $H//Z$-module. Then by Lemma \ref{4.7 write} and Lemma \ref{4.6 rewrite}, we have 
	$$t^H_Zt^Z_1m_1=t^H_1 m_1,\cdots, t^H_Zt^Z_1m_n=t^H_1 m_n$$ 
	 are $k$-linearly independent. Therefore $\{m_i\}$ generate a free $H$-submodule of $M$ by Lemma \ref{4.9 re}, named as $L$. Therefore, $L$ is also a free $Z$-module, which implies $L^Z=L^Z_1=M^Z_1$, generated by $t^Z_1m_1,\cdots, t^Z_1m_n$ as free $H//Z$-module.
	On the other hand, $M^Z_1=F^Z_1=F^Z$, where by assumption   $M\downarrow^H_Z\cong k\oplus F$ for some $F$ free as $Z$-module. Therefore $L^Z=F^Z$, hence $dim(L)=dim(F)$ since  both $L$ and $F$ are free $Z$-modules. Thus $dim(L)=dim(M)-1$ and since $L$ is free, hence injective, as an $H$-module, we get $M\cong L\oplus k$, as was to be shown.   
\end{proof}  

\begin{cor}\label{4.11 re}
	Let $Z$ be a normal  Hopf subalgebra. Then $$H^Z\cong \sigma(\vert Z\vert ) H//Z$$ as $H//Z$-modules.
\end{cor}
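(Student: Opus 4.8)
\section*{Proof proposal for Corollary \ref{4.11 re}}

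The plan is to write down the isomorphism explicitly as right multiplication by the integral of $Z$. Let $t\in Z\subseteq H$ denote the canonical generator $t^Z_1$ of the one-dimensional top piece $Z_{|Z|}$, and consider $\psi\colon H\to H$, $\psi(h)=ht$. Since $|t|=|Z|$, this is a homomorphism of left $H$-modules raising degree by $|Z|$, so everything reduces to three claims: (i) $\psi(H)\subseteq H^Z$; (ii) $\ker\psi=HZ^{+}$; (iii) the induced map is surjective. Granting these, $\psi$ descends to an injective map of graded $H//Z$-modules $\bar\psi\colon\sigma(|Z|)(H//Z)\to H^Z$: here I use that $Z$ is normal so that $H//Z=H/HZ^{+}$, and that $Z^{+}$ annihilates $H^Z$ (if $x\in H^Z$ and $z\in Z^{+}$ then $zx=\epsilon(z)x=0$), so both sides are $H//Z$-modules and $\psi$ is automatically $H//Z$-linear; the shift $\sigma(|Z|)$ merely records that $\psi$ raises degree by $|Z|$.

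For (i): given $h\in H$ and $z\in Z^{+}$, normality gives $zh\in Z^{+}H=HZ^{+}$, say $zh=\sum_i h_i z_i$ with $z_i\in Z^{+}$; then $z\,\psi(h)=\sum_i h_i(z_i t)=0$ because $z_i t\in Z$ lies in degree $>|Z|$, where $Z$ vanishes. Since $Z=k\oplus Z^{+}$ and the case $z=1$ is trivial, $\psi(h)\in H^Z$. For (ii): $HZ^{+}\subseteq\ker\psi$ is immediate from $zt=0$ for $z\in Z^{+}$; conversely, by Theorem \ref{graded lagrange rewrite} the module $H$ is free as a \emph{right} $Z$-module, so fixing a homogeneous right $Z$-basis $\{w_j\}$ and writing $h=\sum_j w_j z^{(j)}$ we get $\psi(h)=\sum_j w_j(z^{(j)}t)$ with $z^{(j)}t\in Z$; by uniqueness of coordinates this is $0$ iff each $z^{(j)}t=0$, and writing $z^{(j)}=\lambda_j\cdot 1+(z^{(j)})'$ with $(z^{(j)})'\in Z^{+}$ gives $z^{(j)}t=\lambda_j t$, which vanishes iff $\lambda_j=0$ (as $t\neq 0$), i.e. iff $z^{(j)}\in Z^{+}$; hence $\ker\psi=\bigoplus_j w_jZ^{+}=HZ^{+}$. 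For (iii): by Theorem \ref{graded lagrange rewrite} again, $H\cong Z\otimes(H/Z^{+}H)$ as graded left $Z$-modules, so $H^Z\cong Z^Z\otimes(H/Z^{+}H)$, and $Z^Z=Z_{|Z|}=kt$ by the same argument that proves $H^H=H_{|H|}$; therefore $\dim_k H^Z=\dim_k(H/Z^{+}H)=\dim_k(H//Z)$, and an injective $H//Z$-linear map between graded $k$-vector spaces of equal finite dimension is an isomorphism.

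The argument is short once one sees the right object: right multiplication by the integral $t^Z_1$ is the natural $H$-linear map of $H$ onto its $Z$-invariants. I expect the only point that needs care is the kernel computation in (ii), where it is exactly the one-sided Lagrange-type freeness of Theorem \ref{graded lagrange rewrite} (freeness of $H$ as a \emph{right} $Z$-module) that pins down $\ker\psi$; by contrast, the containment in (i) is where normality of $Z$ is genuinely used, and (iii) is a routine dimension count against the free left $Z$-module structure.
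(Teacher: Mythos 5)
Your proof is correct, and it takes a recognizably different route from the paper's, even though both ultimately rest on Theorem \ref{graded lagrange rewrite} and a dimension count. The paper identifies $H^Z$ with $t^Z_1\cdot H$ via \emph{left} multiplication by the integral (Lemma \ref{4.6 rewrite}), which gives the graded vector-space isomorphism $H^Z\cong\sigma(\vert Z\vert)H//Z$ for free, but is not left $H$-linear; the module structure is then recovered indirectly by checking that the top class $\overline{t^H_Z}$ acts nontrivially (via the transitivity Lemma \ref{4.7 write}) and invoking the freeness criterion of Lemma \ref{4.9 re}. You instead use \emph{right} multiplication by $t^Z_1$, which is left $H$-linear on the nose, so a single map does all the work; the price is that you must use normality ($Z^+H=HZ^+$) already to see that the image lands in $H^Z$, and you need the right-module half of Theorem \ref{graded lagrange rewrite} to pin down $\ker\psi=HZ^+$, whereas the paper only uses the left-module statement. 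Your kernel computation and the dimension count in step (iii) are both sound (an injective $k$-linear map between finite-dimensional spaces of equal total dimension is onto, and the grading takes care of itself since $\bar\psi$ is homogeneous). What your version buys is an explicit, self-contained isomorphism that bypasses Lemmas \ref{4.6 rewrite}, \ref{4.7 write} and \ref{4.9 re}; what the paper's version buys is reuse of machinery it needs elsewhere anyway (Lemma \ref{4.9 re} is also the engine of Lemma \ref{4.8 rerite} and Lemma \ref{degree comparison}).
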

\begin{proof}
	By Theorem \ref{graded lagrange rewrite}, and Lemma \ref{4.6 rewrite}, as graded $k$-vector spaces, 
	$$H^Z=t^Z_1\cdot Z\otimes H//Z=k\{t^Z_1\}\otimes H//Z\cong \sigma(\vert Z\vert )H//Z.$$
	
	On the other hand, 
	$$\overline{t^H_Z}H^Z=t^H_Zt^H_1H=t^H_1H=k\{t^H_1\}.$$
	Then by Lemma \ref{4.9 re}, $H^Z$ has a free $H//Z$-submodule. Therefore, $H^Z\cong \sigma(\vert Z\vert ) H//Z$ as $H//Z$- modules for dimensional reason.
\end{proof}

\begin{defi}
	Suppose $A$ is an algebra and $M$ is an $A$-module. Let $\mathcal{B}=\{B_i\vert i\in I\}$ be a collection of nontrivial subalgebras of $A$.  If $M$ is free iff $M$ restricted  to every subalgebra $B\in \mathcal{B}$ is free, then we say $A$ has detect property in $grA$, with detecting set $\mathcal{B}$.
\end{defi}
 Here $B$ is nontrivial means $B\neq A$ and $B\neq k$. 

\begin{thm}[Theorem 1.3, Example A.6 of \cite{palmieri1997note}]\label{steenrod alegbra   detect projective re}
	Suppose $E$ is a finite Hopf subalgebra of the mod $2$ Steenrod algebra $\mathcal{A}_2$, $M$ is an $E$-module. Then $M$ is projective if and only if $M$ restricted to $B$ is projective for every elementary Hopf subalgebra $B$ of $E$.
\end{thm}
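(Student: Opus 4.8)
I would establish the two implications separately. The forward implication (projectivity over $E$ implies projectivity over each elementary $B$) is formal and needs nothing special about $B$: since $E$ is a finite graded connected Hopf algebra it is a Poincare algebra, so by Theorem~\ref{proj=free} an $E$-module is projective exactly when it is free, and likewise over any sub-Hopf-algebra. Writing a projective $M$ as a summand of a free $E$-module $F$, Theorem~\ref{graded lagrange rewrite} gives $E\cong B\otimes(E/B^{+}E)$ as left $B$-modules, so $F\!\downarrow^{E}_{B}$ is free and $M\!\downarrow^{E}_{B}$ is a summand of a free $B$-module, hence projective. In particular, once the theorem is in hand it suffices to test the finitely many \emph{maximal} elementary sub-Hopf-algebras of $E$, which by Lemma~\ref{class of elementary } are exactly the $B_{i}\cap E$.

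For the converse the plan is to run the Steenrod-algebra analogue of Chouinard's theorem via support varieties. First I would translate freeness into cohomology: for a finite-dimensional cocommutative Hopf algebra $E$ the ring $R_{E}:=\mathrm{Ext}^{*}_{E}(k,k)$ is graded-commutative and finitely generated (classical for sub-Hopf-algebras of $\mathcal{A}_{2}$), and an $E$-module $M$ is free iff its support variety $V_{E}(M)$ — the subvariety of $V_{E}(k)=\mathrm{Spec}\,R_{E}$ defined by the annihilator of $\mathrm{Ext}^{*}_{E}(M,M)$ — is trivial. An inclusion of Hopf algebras $B\hookrightarrow E$ induces $\mathrm{res}^{*}\colon V_{B}(k)\to V_{E}(k)$, and the standard functoriality of support varieties gives $\mathrm{res}^{*}\big(V_{B}(M\!\downarrow_{B})\big)=V_{E}(M)\cap\mathrm{im}(\mathrm{res}^{*})$.

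Second, I would invoke a Quillen-type stratification for finite sub-Hopf-algebras of $\mathcal{A}_{2}$ — this is the role of \cite{palmieri1997note}, whose Example~A.6 identifies the detecting family: the map
$$\coprod_{B} V_{B}(k)\ \longrightarrow\ V_{E}(k),$$
with $B$ ranging over the elementary sub-Hopf-algebras of $E$, is surjective (geometrically it comes from an $F$-isomorphism of cohomology rings). Combining the two inputs,
$$V_{E}(M)=\bigcup_{B}\mathrm{res}^{*}\big(V_{B}(M\!\downarrow_{B})\big),$$
so if every $M\!\downarrow_{B}$ is projective, i.e. $V_{B}(M\!\downarrow_{B})$ is trivial for all elementary $B$, then $V_{E}(M)$ is trivial and $M$ is free over $E$.

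The hard part is the stratification itself: showing that every point of $V_{E}(k)$ is detected by an elementary sub-Hopf-algebra. The route I would take is Carlson's rank-variety description — realize points of $V_{E}(k)$ by cyclic ``shifted sub-algebras'' of $E$ — and then argue, using the profile-function classification of Theorem~\ref{class of sub hopf alg}, the commutator identities of Lemma~\ref{s<t s=t}, and the explicit form of the maximal elementary pieces $B_{i}\cap E$ from Lemma~\ref{class of elementary }, that every such shifted sub-algebra is conjugate into some $B_{i}\cap E$. Matching the rank variety with the cohomological variety while keeping track of the internal grading and the degree-doubling (Frobenius) twist of Lemma~\ref{A//E} is the technical heart; granting it, the remainder is formal support-variety bookkeeping. (The same argument can equivalently be phrased through the Margolis homologies $H(M;P^{s}_{t})$, which package the same rank-variety input.)
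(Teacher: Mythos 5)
The first thing to note is that the paper contains no proof of this statement to compare against: it is imported verbatim from \cite{palmieri1997note} (Theorem 1.3 together with Example A.6) and used as a black box. So your proposal is really an attempt to reconstruct Palmieri's proof, not the paper's.

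Your forward implication is correct and standard: freeness of $E$ over $B$ (Theorem \ref{graded lagrange rewrite}) plus projective $=$ free for Poincare algebras (Theorem \ref{proj=free}) does the job. For the converse, the architecture you describe --- translate freeness into triviality of a cohomological support, then invoke a stratification theorem saying the elementary sub-Hopf-algebras detect everything --- is the right shape and is indeed how the cited result is understood. But as a proof it has a genuine gap: the entire content is deferred to the ``stratification'' step, which \emph{is} Palmieri's theorem, and the route you propose for proving it does not transfer from group algebras to $\mathcal{A}_2$. Carlson's rank varieties and cyclic shifted subgroups depend on $kE\cong k[x_1,\dots,x_n]/(x_i^p)$ for $E$ elementary abelian and on conjugation by units of the group algebra; a finite sub-Hopf-algebra of $\mathcal{A}_2$ is noncommutative, graded, and not of this form, and there is no conjugation action available to move a ``shifted subalgebra'' into one of the $B_i\cap E$. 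Palmieri's actual argument proceeds by induction over normal extensions of sub-Hopf-algebras (using the profile-function classification as in Theorem \ref{class of sub hopf alg}) together with vanishing-line and nilpotence estimates in the associated spectral sequences, not by a conjugation argument. A smaller issue: the statement allows arbitrary $E$-modules $M$, whereas the equivalence ``free iff $V_E(M)$ trivial'' as you phrase it requires $M$ finitely generated (or the infinitely-generated extension of the theory); in this setting detection is usually routed through the Margolis homologies $H(M;P^s_t)$, which behave well under direct limits. In short: correct skeleton, but the decisive step is unproved and the specific tool you propose for it would fail.
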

\begin{remark}
Here $E$ and $B$ are all graded, and the grading was given by that of $\mathcal{A}_2$. 
\end{remark}
\begin{remark}
 In particular,  if $E$ is not elementary, $E$ has detect property in $grE$ with detecting set $\{B\vert B\text{ is a maximal  elementary Hopf subalgebra } \}$.
\end{remark}

Now we are ready to prove our main theorem:
\begin{thm}\label{3.2 steenrod re}
	Suppose $H$ has a nontrivial normal Hopf subalgebra $Z$ such that $H//Z$ has detect property in $gr(H//Z)$ with detecting set $\mathcal{B}=\{B_i\vert i\in I\}$. Then  
	$$Res: \mathbb{T}(H)\longrightarrow \prod_{E\in \mathcal{E}}\mathbb{T}(E). $$
	is injective,
	where $\mathcal{E}=\{E_i\vert E_i//Z=B_i ,i\in I\}$.
\end{thm}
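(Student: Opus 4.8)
The plan is to prove directly that $Res$ has trivial kernel. Suppose $[M]\in\mathbb{T}(H)$ lies in the kernel; unwinding, $M$ is a finitely generated endotrivial $H$-module with $M\downarrow^H_{E_i}\cong\underline{k}\oplus F_i$ for some free $E_i$-module $F_i$, for every $i\in I$, and the task is to show $M\cong\underline{k}\oplus(\mathrm{free})$ as an $H$-module. The key tool is Lemma \ref{4.8 rerite}: it is enough to verify its two hypotheses, namely (i) $M\downarrow^H_Z\cong\underline{k}\oplus(\mathrm{free})$ and (ii) $M^Z_1$ is a free $H//Z$-module.

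Hypothesis (i) is quick. Fix any $i\in I$; since $Z\subset E_i$, restricting the isomorphism $M\downarrow^H_{E_i}\cong\underline{k}\oplus F_i$ down to $Z$ gives $M\downarrow^H_Z\cong\underline{k}\oplus(F_i\downarrow^{E_i}_Z)$, and $F_i\downarrow^{E_i}_Z$ is free because $E_i$ is a free $Z$-module by Theorem \ref{graded lagrange rewrite}.

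Hypothesis (ii) is the substantive step, and I would obtain it by feeding a computation on each $E_i$ into the detect property of $H//Z$. First, $M^Z_1=t^Z_1\cdot M$ is an $H//Z$-submodule of $M^Z$: it lies inside $M^Z$ because $z\,(t^Z_1 m)=\epsilon(z)\,t^Z_1 m$ for $z\in Z$, it is $H$-stable because $h\,(t^Z_1 m)=(h\,t^Z_1)\,m$ with $h\,t^Z_1\in H^Z=t^Z_1 H$ (using normality of $Z$ together with the description of $H^Z$ in the proof of Corollary \ref{4.11 re}), and the $H$-action on $M^Z$ factors through $H//Z$. Since $Z\subset E_i$ and $B_i=E_i//Z$ is a subalgebra of $H//Z$ (because $E_i\cap HZ^+=E_iZ^+$, which follows from the $Z$-freeness of $H$ in Theorem \ref{graded lagrange rewrite}), restricting this $H//Z$-structure to $B_i$ recovers the $E_i//Z$-module $(M\downarrow^H_{E_i})^Z_1$, which as a set is again $t^Z_1 M$. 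Now invoke $M\downarrow^H_{E_i}\cong\underline{k}\oplus F_i$: as $t^Z_1\in Z^+$ annihilates $\underline{k}$, one gets $(M\downarrow^H_{E_i})^Z_1=(F_i)^Z_1$, and since $F_i$ is $Z$-free, Lemma \ref{4.6 rewrite} identifies $(F_i)^Z_1$ with $(F_i)^Z$. Writing $F_i\cong\bigoplus_j\sigma(\alpha_j)E_i$ and applying Corollary \ref{4.11 re} with $E_i$ playing the role of $H$ gives $(F_i)^Z\cong\bigoplus_j\sigma(\alpha_j+\vert Z\vert)B_i$, a free $B_i$-module. Thus $M^Z_1$ restricts to a free module over every $B_i\in\mathcal{B}$, and the detect property of $H//Z$ forces $M^Z_1$ to be $H//Z$-free. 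This is (ii).

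With (i) and (ii) in hand, Lemma \ref{4.8 rerite} yields $M\cong\underline{k}\oplus(\mathrm{free})$, that is $[M]=0$ in $\mathbb{T}(H)$; since $Res$ is a homomorphism of groups, it is injective. I expect the main obstacle to be the bookkeeping in (ii): checking that $M^Z_1$ really does carry an $H//Z$-module structure on which each $B_i$ acts through $E_i//Z$, which relies on $Z$ being normal not only in $H$ but --- automatically, since the conjugation action is inherited by subalgebras --- in each $E_i$, and on identifying $B_i$ with a subalgebra of $H//Z$. It is worth noting that endotriviality of $M$ enters only to make $Res$ a well-defined group homomorphism (and to make $[M\downarrow^H_{E_i}]=0$ mean $M\downarrow^H_{E_i}\cong\underline{k}\oplus(\mathrm{free})$); the kernel computation itself works for any finitely generated $H$-module that becomes stably trivial on each $E_i$.
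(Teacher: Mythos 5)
Your proposal is correct and follows essentially the same route as the paper: reduce to Lemma \ref{4.8 rerite} by checking that $M\downarrow^H_Z\cong \underline{k}\oplus(\mathrm{free})$ and that $M^Z_1=F^Z_1=F^Z$ is free over each $B_i=E_i//Z$ (via Lemma \ref{4.6 rewrite} and Corollary \ref{4.11 re}), then invoke the detect property of $H//Z$. The extra care you take in verifying the $H//Z$-module structure on $M^Z_1$ is detail the paper leaves implicit, not a different argument.
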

\begin{proof}
	By assumption there is a nontrivial normal Hopf subalgebra $Z$ of $H$ such that $H//Z$ has detect property with detecting set $\mathcal{B}$.
	
	Let $M$ be an endotrivial $H$-module whose class in $\mathbb{T}(H)$ is in the kernel of the restriction map $Res$. This means $M\cong k\oplus F$ as $E$-modules, for $E\in \mathcal{E}$, where $F$ is a free $E$-module. Since $Z$ is finite nontrivial connected  Hopf algebra, $\vert t^Z_1\vert \neq 0$ and thus $t^Z_1\in Z^+$. If follows that 	
	$$M^Z=k\oplus F^Z, \quad M^Z_1=F^Z_1.$$
	
	Since $F$ is a free $E$-module, $F$ is also a free $Z$ module, thus by Lemma \ref{4.6 rewrite},  $F^Z_1=F^Z$ . Moreover, $F^Z$ is free over $E//Z$ since $F$ is free over $E$, see Corollary \ref{4.11 re}.
	
	Therefore, $M^Z_1$ is a free $E//Z$ module, for every   $E\in \mathcal{E}$. Using the fact that $H//Z$ has detect property with detecting set $\mathcal{B}=\{B_i\vert i\in I\}=\{E//Z\vert E\in \mathcal{E}\}$, we have $M^Z_1$ is a free $H//Z$-module. By Lemma \ref{4.8 rerite}, we are done. 
\end{proof}

\section{Endotrivial group of A(n)} \label{A(n)}

Consider the  Hopf algebra $A(n), n\geq 2$. From now on, let $a=[(n-1)/2]$, $i$ be an integer with $1\leq i \leq a+1$ and we will work on the field $\mathbb{Z}/2$.

By Lemma \ref{A//E}, there is an isomorphism of algebras $\eta: A(n-1)\to A(n)//E(n)$ which doubles the grading. Use the same symbol as in the proof of  Lemma \ref{class of elementary }, the maximal elementary Hopf subalgebras of $A(n-1)$ are exactly  $B_1\cap A(n-1),\cdots , B_{a+1}\cap A(n-1)$, with which $A(n-1)$  has detect property.  Define $$h_{B_i'(n)}(t)=\begin{cases}
	h_{B_i\cap A(n-1)}(t)+1& 1\leq t\leq n+1 \\
	0& t> n+1
\end{cases}$$
By Lemma \ref{lemma 1}, $B_i'(n)$ is a Hopf subalgebra, since $h_{B_i'(n)}(t)$ is an increasing function when $t\leq i$ and $h_{B_i'(n)}(t)=h_{A(n)}(t)$ when $t\geq i$.

Again by Lemma \ref{lemma 1}, for each $B_i'(n)$ there are  Hopf subalgebras $D_{i}(n)$ defined by 
$$h_{D_{i}(n)}(t)=\begin{cases}
	0& t< i\\
	h_{B_i'(n)}(t)& t\geq i
\end{cases}.$$

Given $n\geq 2$, recall that $B_1'(n)=D_1(n)$. 

There are injective homomorphisms between the group of endotrivial modules for these Hopf subalgebras:

$\mathbb{T}(A(n))\to \prod_{i=1}^{1+a}\mathbb{T}(B_i'(n)) $ (Proposition \ref{step 0})

$\mathbb{T}(B_i'(n))\to \mathbb{T}(\ast)\times \mathbb{T}(D_{i}(n)), 2\leq i \leq a+1$ (Proposition \ref{step 1}).

$\mathbb{T}(D_{i}(n))\to \mathbb{T}(\ast)\times \mathbb{T}(\ast), 1\leq i \leq a+1 $
(Proposition \ref{step 2} \ref{step 3})

 All the above maps are  the restriction map induced by inclusion of Hopf subalgebras, $\ast$ means certain elementary Hopf subalgebras. 

Finally,  we define $O_i=<P^0_t\vert i\leq t\leq n+1>$.

The rest of this section will be devoted to the proof of Theorem \ref{what we want}.
\bigskip

\begin{prop} \label{step 0}
$\forall n\geq 2$,
$$Res: \mathbb{T}(A(n))\longrightarrow \prod_{i=1}^{a+1}\mathbb{T}(B'_i(n))$$ is injective.
\end{prop}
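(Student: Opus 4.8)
The plan is to obtain Proposition \ref{step 0} as a direct application of Theorem \ref{3.2 steenrod re} with $H=A(n)$ and $Z=E(n)$, the subalgebra generated by $\{P^0_t\mid t\leq n+1\}$. This $Z$ is a nontrivial normal Hopf subalgebra of $A(n)$ (it is exterior, nonzero since $n\geq 2$, and normal as recorded above), so the only real work is to check that $H//Z=A(n)//E(n)$ has the detect property in $gr(A(n)//E(n))$, and then to identify the resulting detecting set together with the set $\mathcal{E}$ of Theorem \ref{3.2 steenrod re} with $\{B'_i(n)\mid 1\leq i\leq a+1\}$.

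First I would establish the detect property for $A(n)//E(n)$. By Lemma \ref{A//E} the map $\eta$ is an algebra isomorphism $A(n-1)\xrightarrow{\sim}A(n)//E(n)$ that doubles degrees; in particular $A(n)//E(n)$ is concentrated in even degrees. For $n\geq 2$ the algebra $A(n-1)$ is a finite Hopf subalgebra of $\mathcal{A}_2$ which is not elementary (for instance $(Sq^2)^2\neq 0$), so Theorem \ref{steenrod alegbra detect projective re} and the remark after it give that $A(n-1)$ has the detect property in $grA(n-1)$ with detecting set its maximal elementary Hopf subalgebras; by Lemma \ref{class of elementary } and the remark following it, these are exactly $B_1\cap A(n-1),\dots,B_{a+1}\cap A(n-1)$. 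To transport this statement to $gr(A(n)//E(n))$, I would use that any graded module over an even-concentrated algebra splits as the direct sum of its even and odd parts, each of which is a submodule and the odd part a shift of an even-concentrated module; combining this parity splitting with the categorical isomorphism $\eta^*$ of Proposition \ref{A//E 2} shows that $A(n)//E(n)$ has the detect property in $gr(A(n)//E(n))$ with detecting set $\{\,\eta(B_i\cap A(n-1))\mid 1\leq i\leq a+1\,\}$.

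Next I would match the set $\mathcal{E}$ of Theorem \ref{3.2 steenrod re} with the Hopf subalgebras $B'_i(n)$. Any Hopf subalgebra $E_i$ of $A(n)$ with $E_i//E(n)$ equal to the $i$-th detecting algebra contains $E(n)$ and is its full preimage under the quotient map, hence is uniquely determined. Since $h_{E(n)}(t)=1$ for $1\leq t\leq n+1$ and $0$ otherwise, and since $\eta$ identifies the quotient of a Hopf subalgebra of $A(n)$ by $E(n)$ with (the double of) the corresponding Hopf subalgebra of $A(n-1)$, comparing profile functions forces $h_{E_i}(t)=h_{B_i\cap A(n-1)}(t)+1$ for $1\leq t\leq n+1$ and $0$ otherwise; this is precisely the profile function of $B'_i(n)$. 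Hence $\mathcal{E}=\{B'_i(n)\mid 1\leq i\leq a+1\}$, and Theorem \ref{3.2 steenrod re} yields that $Res\colon\mathbb{T}(A(n))\to\prod_{i=1}^{a+1}\mathbb{T}(B'_i(n))$ is injective, as claimed.

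I expect the main obstacle to be the bookkeeping around the degree-doubling isomorphism $\eta$: one must be careful that the detect property, which is stated only for honest finite Hopf subalgebras of $\mathcal{A}_2$, really does transport across $\eta$ to the quotient $A(n)//E(n)$ — this is exactly where the even/odd splitting and Proposition \ref{A//E 2} are needed — and that the profile-function identification of $E_i$ with $B'_i(n)$ is consistent with the doubling. Once these identifications are in place, the statement follows formally from Theorem \ref{3.2 steenrod re}.
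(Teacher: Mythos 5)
Your proposal is correct and follows essentially the same route as the paper: apply Theorem \ref{3.2 steenrod re} with $Z=E(n)$, establish the detect property of $A(n)//E(n)$ by combining the even/odd splitting of modules over the even-concentrated algebra $A(n)//E(n)$ with the categorical isomorphism $\eta^*$ of Proposition \ref{A//E 2} and the detect property of $A(n-1)$ on its maximal elementary Hopf subalgebras $B_i\cap A(n-1)$. Your additional profile-function check that the preimages $E_i$ are exactly the $B'_i(n)$ is a detail the paper leaves implicit, but it is consistent with the degree-doubling of $\eta$ and does not change the argument.
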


\begin{proof}
	Given a finitely generated   $A(n)//E(n)$-module $M$, assume that $M\vert _{B_i'(n)//E(n)}$ is free in $gr\left(B_i'(n)//E(n)\right)$ for each $1\leq i\leq a+1$, we will show that $M$ is a free $A(n)//E(n)$-module. 
	
	The trick is that  there is a natural splitting $M=M_1\oplus \sigma(1)M_2$ where $M_1$ and $M_2$ are all concentrated in even degree.   This is because that $A(n)//E(n)$ is concentrated in even degree.  Proposition \ref{A//E 2} guarantees that  $(\eta^*M_j)\vert _{B_i\cap A(n-1)}$ is free for $j=1,2$; and then the detect property of $A(n-1)$ with detecting set $\{B_i\cap A(n-1)\vert 1\leq i\leq a+1\}$ implies $\eta^*M_j$ is a free $A(n-1)$-module. Again, since $\eta^*$  is an isomorphism between the two module categories, $M_j$ is a free $A(n)//E(n)$-module, $j=1,2$. Namely, $M=M_1\oplus \sigma(1)M_2$ is a free $A(n)//E(n)$-module.
	
	The above argument shows that $A(n)//E(n)$ has detect property with detecting sets $\{B_i'(n)//E(n)\vert  1\leq i\leq a+1\}$. By Theorem \ref{3.2 steenrod re}, we are done.
\end{proof}

\begin{lem}\label{degree comparison}
	Given two  Hopf subalgebras $E_1$, $E_2$  of $H$ and suppose $M$ is a finitely generated $H$-module such that $M\vert _{E_j}$ is a free $E_j$-module for $j=1,2$.
	
	M is a free H-module, if 

(1)$\exists t_1\neq 0 \in E_1, t_2\neq 0 \in E_2$ such that $\vert t_1t_2 \vert = \vert H\vert$, and

(2)for all homogeneous $y\neq 0  \in E^+_2 , \vert y\vert  > \vert t_1\vert $  or $y=z$ with $z^2 = 0, 0\neq  zt_1 \in E^+_1$.

\end{lem}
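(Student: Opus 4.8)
The plan is to deduce freeness from the integral criterion of Lemma~\ref{pre4.9 re}: $M$ is a free $H$-module precisely when multiplication by $t^H_1$ induces a \emph{monomorphism} $k\otimes_H M\to M$, i.e. precisely when, for every homogeneous $x\in M$,
\[
t^H_1 x=0\ \Longrightarrow\ x\in H^+M .
\]
So fix a homogeneous $x$ with $t^H_1x=0$; it is enough to show $x\in E^+_1M+E^+_2M$, since $E^+_iM\subseteq H^+M$. By~(1) the element $t_1t_2$ has degree $|H|$ and is nonzero (this is implicit in writing $|t_1t_2|$), hence is a scalar multiple of the integral $t^H_1$; rescaling, we may assume $t^H_1=t_1t_2$, so the relation becomes $t_1(t_2x)=0$.

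Next I would record two consequences of the freeness hypotheses. Writing $M=\bigoplus_\alpha E_1m_\alpha$ with homogeneous $m_\alpha$ (possible since $M\downarrow_{E_1}$ is free), one has $\ker\!\big(t_1\cdot\colon M\to M\big)=\bigoplus_\alpha Km_\alpha$ with $K=\ker\!\big(t_1\cdot\colon E_1\to E_1\big)$; as $t_1\neq0$ the degree-zero part of $K$ vanishes, so $K\subseteq E^+_1$ and therefore $\ker(t_1\cdot\colon M\to M)\subseteq E^+_1M$. Applied to $t_2x$, which satisfies $t_1(t_2x)=t^H_1x=0$, this gives $t_2x\in E^+_1M$. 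The same argument with $E_2$ and $t_2$ yields $\ker(t_2\cdot\colon M\to M)\subseteq E^+_2M$. The whole statement then reduces to the inclusion
\[
(\star)\qquad t_2M\cap E^+_1M\ \subseteq\ t_2\big(E^+_1M\big):
\]
granting $(\star)$, the element $t_2x$ lies in $t_2M\cap E^+_1M$, hence equals $t_2x'$ for some $x'\in E^+_1M$, whence $x-x'\in\ker(t_2\cdot)\subseteq E^+_2M$ and $x=x'+(x-x')\in E^+_1M+E^+_2M$, as required.

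It remains to prove $(\star)$, which is the crux of the lemma and where conditions~(1), (2) and the boundedness of graded modules all enter. The strategy is to decompose $M$ along a homogeneous $E_2$-basis and to analyse, summand by summand, the $E_2$-linear map ``multiply by $t_2$'': given a homogeneous $v=t_2w\in E^+_1M$, one corrects $w$ by successive elements of $\ker(t_2\cdot)$ so as to annihilate its $E_2$-basis components of positive degree. Condition~(2) is what makes this work: the only homogeneous elements of $E^+_2$ of degree $\le|t_1|$ — the only degrees that can still conflict with $v\in E^+_1M$ after one multiplies back by $t_1$ — are the scalar multiples of a single exterior generator $z$, while every other element of $E^+_2$ raises degree past $|t_1|$; the correction maps one is forced to ``invert'' therefore have strictly positive degree and hence are nilpotent on the finite-dimensional graded module (so a Nakayama-type obstruction vanishes), and the leftover $z$-component is absorbed into $E^+_1M$ using $z^2=0$ together with $0\neq zt_1\in E^+_1$. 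I expect the main obstacle to be exactly this bookkeeping: aligning, degree by degree, the kernel of $t_1\cdot$ (seen through the free $E_1$-structure) with the cokernel of $t_2\cdot$ (seen through the free $E_2$-structure) and isolating the exceptional generator $z$. Once $(\star)$ is in hand, Lemma~\ref{pre4.9 re} completes the argument.
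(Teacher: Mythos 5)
Your reduction via Lemma~\ref{pre4.9 re} is set up correctly as far as it goes: the identification of $\ker(t_1\cdot\colon M\to M)$ with $\bigoplus_\alpha K m_\alpha\subseteq E_1^+M$ using an $E_1$-basis is sound (and likewise for $t_2$), and \emph{granting} your inclusion $(\star)$ the conclusion $x\in E_1^+M+E_2^+M\subseteq H^+M$ does follow. But $(\star)$ is exactly where the entire content of the lemma lives, and you do not prove it — you offer a sketch (``correct $w$ by successive elements of $\ker(t_2\cdot)$'', ``a Nakayama-type obstruction vanishes'', ``the leftover $z$-component is absorbed'') and then concede that the bookkeeping is the main obstacle. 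That is not a proof, and it is not clear the sketch can be completed: condition (2) only gives the inequality $\vert y\vert>\vert t_1\vert$ for $y\in E_2^+$, which is useful only when weighed against an element of \emph{known minimal degree}. The paper's proof manufactures such an anchor by taking a minimal homogeneous generating set $m_1,\dots,m_q$ of $M$ over $H$ ordered by degree: since $\vert m_1\vert=\min\deg M$, any expression $\sum x_l b_l$ with $x_l\in E_1^+$ has degree $>\vert m_1\vert$, so $m_1\notin E_1^+M$ and hence $xm_1\neq0$ for all $0\neq x\in E_1$; the same degree count (now anchored at $\vert t_1m_1\vert$) combined with $z^2=0$ and $zt_1\neq0$ shows $t_1m_1\notin E_2^+M$, whence $t_2t_1m_1\neq0$, so $m_1$ spans a free summand by Lemma~\ref{4.9 re}, and one inducts on $q$. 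In your formulation the element $w$ with $t_2w\in E_1^+M$ is arbitrary, there is no minimal-degree anchor, and the claimed degree-by-degree alignment of $\ker(t_1\cdot)$ with $\operatorname{coker}(t_2\cdot)$ is precisely the assertion you were supposed to establish.

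Two smaller points. First, you read condition (1) as asserting $t_1t_2\neq0$; it only asserts the degree identity $\vert t_1\vert+\vert t_2\vert=\vert H\vert$ (that is how it is verified in Propositions~\ref{step 1}--\ref{step 3}), and the nonvanishing of the product is something the paper \emph{derives} from $t_2t_1m_1\neq0$, not assumes. Second, even a posteriori (i.e.\ for $M$ already known to be free over $H$) the inclusion $t_2M\cap E_1^+M\subseteq t_2(E_1^+M)$ is not obviously true, so you should at least convince yourself it is not a dead end before investing in the bookkeeping. As it stands the proposal restates the problem rather than solving it.
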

\begin{proof}
	Suppose $f:\oplus_{j=1}^qH\to M$ with $f(h_1,h_2,\cdots, h_q)=\sum_{j=1}^{q}h_jm_j$ is a surjective $H$-module homomorphism with $q$ is minimal. Such $f$ and $q$ exist since $M$ is finitely generated. Furthermore, we assume that $\vert m_j\vert \leq \vert m_{j+1}\vert $.  We show that $m_1$ generates a free $H$-submodule first. 
	
	Suppose $m_1=\sum_{l}x_lb_l$ for some homogeneous	$ x_l\in E_1^+, b_l\in M$. Then for each $l$, one has $\vert x_lb_l\vert =\vert x_l\vert +\vert b_l\vert > \vert m_1\vert $, contradiction. Hence $xw$ is not a summand of $m_1$, $\forall x\in E_1^+, w\in M$. By assumption $M$ is a free $E_1$ module, thus $xm_1\neq0$. In particular, $t_1m_1\neq0$. 
	
Suppose $t_1m_1=\sum_{l}y_lc_l$ for some homogeneous $ y_l\in E_2^+, c_l\in M$ and $\exists y_{l_0}\neq z$. Then  $\vert y_{l_0}c_{l_0}\vert =\vert y_{l_0}\vert +\vert c_{l_0}\vert >\vert t_1\vert +\vert c_{l_0}\vert \geq \vert t_1m_1\vert $, which  is impossible. Thus $t_1m_1=z\cdot c$ where $c=\sum_l c_l$, then $z\cdot t_1m_1=(z)^2c=0$. On the other hand, by condition (2), $0\neq zt_1\in E_1$, which is a contradiction since we showed $xm_1\neq 0$, for all nonzero $x\in E_1$. In conclusion, $yw$ is not a summand of $t_1m_1$ for all $y\in E_2^+$ and $w\in M$. Again since $M$ is a free $E_2$ module, $yt_1m_1\neq0$.
	
	In summary, $yt_1m_1\neq0, \forall y\in E_2^+$. In particular, $t_2t_1m_1\neq0$, which implies that $t_2t_1\neq 0$. By assumption, $\vert t_2t_1\vert =\vert H\vert $.
	Therefore, $t_2t_1 =t^H_1$ since $H$ is a finite connected   Hopf algebra and $t_2t_1 \neq 0$.
	By Lemma \ref{4.9 re}, $m_1$ generates a free $H$-submodule $L_1\subset M$.

 	Since $q$ is minimal, $m_j\notin L_1$ for $j\geq 2$.
	Define $M_2=M/L_1$, then $f_2:\oplus_{j=1}^{q-1}H\to M_2$ with $f(h_1,\cdots, h_{q-1})=\sum_{j=1}^{q-1}h_j\overline{m_{j+1}}$ is a surjective $H$-module homomorphism with $q-1$ is minimal
	, where $\vert \overline{m_{j}}\vert \leq \vert \overline{m_{j+1}}\vert $. By an induction on the number of generators, one may assume that $M_2$ is free. Therefore, $M=L_1\oplus M_2$ is a free   $H$-module. 
\end{proof}
The proof of Proposition \ref{step 1}, Proposition \ref{step 2}, and Proposition \ref{step 3} rely heavily on the above Lemma. To apply this Lemma, one must know well about the degree of the related algebraic generators $P^s_t$.

\begin{lem}\label{jisuan}
	For $s,u>0$ and $t,v\geq 1$, 
	
	(1) If $s+t=u+v$ and $t\geq v$ then $\vert P^s_t\vert \geq \vert P^{u}_v\vert $;
	
	(2) If $s+t>u+v$, then $\vert P^s_t\vert >\vert P^u_v\vert $.
\end{lem}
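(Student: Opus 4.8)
The plan is to reduce both statements to the explicit degree formula for Milnor basis elements. Since $P^s_t = P_t(2^s) = Sq(0,\dots,0,2^s)$ with the nonzero entry in the $t$-th position, the formula $\vert Sq(r_1,r_2,\dots)\vert = \sum_i (2^i-1)r_i$ gives $\vert P^s_t\vert = (2^t-1)2^s = 2^{s+t}-2^s$. Thus each part becomes an elementary inequality between integers of this shape, and no further input from $\mathcal{A}_2$ is needed.

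For (1), I would set $k = t - v$; the hypotheses $s+t=u+v$ and $t\geq v$ give $k\geq 0$ and $u = s+k$, whence
\[
\vert P^s_t\vert - \vert P^u_v\vert = (2^{v+k}-1)2^s - (2^v-1)2^{s+k} = 2^s\bigl(2^{v+k}-1-2^{v+k}+2^k\bigr) = 2^s(2^k-1)\geq 0,
\]
with equality precisely when $k=0$, i.e. $t=v$.

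For (2), from $s+t>u+v$ with all quantities integers we get $s+t-1\geq u+v$, and since $t\geq 1$ we have $2^s\leq 2^{s+t-1}$. Hence $\vert P^s_t\vert = 2^{s+t}-2^s \geq 2^{s+t}-2^{s+t-1} = 2^{s+t-1}\geq 2^{u+v} > 2^{u+v}-2^u = \vert P^u_v\vert$, the last strict inequality coming from $2^u>0$ (which is where the hypothesis $u>0$, or indeed just $u\geq 0$, is used).

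I do not expect a real obstacle here: the only points requiring a little care are bookkeeping in part (1) — i.e. recording that $t\geq v$ forces $u\geq s$ and $k\geq 0$ so that the exponents come out correctly — and, in part (2), making sure the strictness is attributed to the $-2^u$ term rather than to a comparison of exponents (the exponents alone only give $\geq$ after the step $2^{s+t}-2^s\geq 2^{s+t-1}$). I would state the degree formula $\vert P^s_t\vert = 2^{s+t}-2^s$ once at the outset and then carry out the two short computations above.
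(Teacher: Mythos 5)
Your proof is correct and uses the same basic input as the paper, namely the explicit degree formula $\vert P^s_t\vert = 2^{s+t}-2^s$; part (1) is the paper's argument ($t\geq v$ forces $u\geq s$) written out as a difference, and part (2) replaces the paper's reduction to the extremal elements $P^0_{s+t}$ and $P^{u+v-1}_1$ by the equivalent direct estimate $2^{s+t}-2^s\geq 2^{s+t-1}\geq 2^{u+v}>\vert P^u_v\vert$. No gaps.
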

\begin{proof}
	(1) By definition,  $\vert P^s_t\vert =2^s(2^t-1)=2^{s+t}-2^s$ and $\vert P^u_v\vert =2^u(2^v-1)=2^{u+v}-2^u$. By assumption $u\geq s$ and hence $\vert P^s_t\vert \geq \vert P^{u}_v\vert $.
	
	(2) By assumption, $s+t>1$.  By (1), since $P^0_{s+t}=2^{s+t}-1> 2^{s+t-1} > 2^{u+v-1}=P^{u+v-1}_1 $, we are done.
\end{proof}

\begin{prop}\label{step 1}
For each $2\leq i \leq a+1$, $n\geq 2$,
	$$Res: \mathbb{T}(B_i'(n))\longrightarrow \mathbb{T}(E(n))\times \mathbb{T}(D_{i}(n))$$ is injective.
\end{prop}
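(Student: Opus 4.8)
The plan is to deduce Proposition \ref{step 1} from Theorem \ref{3.2 steenrod re} applied to the Hopf algebra $H = B_i'(n)$ together with the normal Hopf subalgebra $Z = E(n)$. First I would check that $E(n)$ is indeed a normal Hopf subalgebra of $B_i'(n)$: by construction $h_{E(n)}(t) = 1$ for $t \le n+1$ and $0$ otherwise, and $h_{E(n)}(t) \le h_{B_i'(n)}(t)$ since $h_{B_i'(n)}(t) \ge 1$ for $t \le n+1$ (indeed $h_{B_i \cap A(n-1)}(t) \ge 0$), so $E(n) \subset B_i'(n)$, and normality follows from Theorem \ref{class of sub hopf alg}(c) exactly as for $E(n) \subset A(n)$. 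Next I would identify the quotient $B_i'(n)//E(n)$: since doubling degrees sends $h_{E(n)}$-quotients of $A(n)$-type algebras to $A(n-1)$-type algebras (Lemma \ref{A//E} and Proposition \ref{A//E 2}), the map $\eta$ restricts to an isomorphism $\eta^\ast : gr(B_i'(n)//E(n))^{ev} \to gr(B_i \cap A(n-1))$.

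The core of the argument is then to show $B_i'(n)//E(n)$ has the detect property in $gr(B_i'(n)//E(n))$, with detecting set consisting of (the image under $\eta^{-1}$ of) the maximal elementary Hopf subalgebras of $B_i \cap A(n-1)$. Here I would run the same even/odd splitting trick as in the proof of Proposition \ref{step 0}: any finitely generated $B_i'(n)//E(n)$-module $M$ splits as $M_1 \oplus \sigma(1)M_2$ with $M_1, M_2$ concentrated in even degree because $B_i'(n)//E(n)$ lives in even degrees; freeness of $M|_{(\text{detector})//E(n)}$ gives freeness of $\eta^\ast M_j|_{(\text{maximal elementary})}$ by Proposition \ref{A//E 2}, and then Theorem \ref{steenrod alegbra   detect projective re} (in the form of its final remark) applied inside $B_i \cap A(n-1)$ — which is not elementary for $i \ge 2$ — forces $\eta^\ast M_j$ free, hence $M_j$ free, hence $M$ free. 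This is the step requiring the hypothesis $i \ge 2$: for $i = 1$ the algebra $B_1 \cap A(n-1)$ is itself elementary and has no proper detecting set, which is why the case $i=1$ (where $B_1'(n) = D_1(n)$) is handled separately later.

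Once the detect property is established, Theorem \ref{3.2 steenrod re} immediately yields injectivity of
$$Res : \mathbb{T}(B_i'(n)) \longrightarrow \prod_{E \in \mathcal{E}} \mathbb{T}(E),$$
where $\mathcal{E} = \{E : E//E(n) \text{ is a detector}\}$. The final bookkeeping step is to identify this product: one detector of $B_i \cap A(n-1)$ is $B_i \cap A(n-1)$ itself viewed via the containment... more carefully, the maximal elementary Hopf subalgebras of $B_i \cap A(n-1)$ that pull back through $\eta^\ast$ correspond under the degree-doubling correspondence to exactly two Hopf subalgebras of $B_i'(n)$, namely $E(n)$ (whose quotient is trivial, i.e. the "bottom" detector coming from $B_1 \cap A(n-1) \subset B_i \cap A(n-1)$) and $D_i(n)$ (whose quotient $D_i(n)//E(n)$ matches, after doubling, the relevant maximal elementary subalgebra $B_i \cap A(n-1)$ of $A(n-1)$, by comparing profile functions: $h_{D_i(n)}(t) - [\![t \le n+1]\!] \cdot 1$ doubles to $h_{B_i \cap A(n-1)}$ restricted appropriately). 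So $\mathcal{E} = \{E(n), D_i(n)\}$ and the target is $\mathbb{T}(E(n)) \times \mathbb{T}(D_i(n))$, as claimed.

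The main obstacle I anticipate is the last identification: verifying carefully that the preimages under $\eta^\ast$ of the maximal elementary Hopf subalgebras of $B_i \cap A(n-1)$ are precisely $E(n)$ and $D_i(n)$ — this is a profile-function computation using Lemma \ref{class of elementary } (which describes maximal elementary subalgebras as $B_j \cap A(n-1)$), the degree-doubling shift built into $\eta$, and the definitions of $B_i'(n)$ and $D_i(n)$. One must be attentive that the detecting set of $B_i \cap A(n-1)$ in the sense of the non-elementary case has *two* members here (which is what makes $D_i(n)$ appear alongside $E(n)$), and that these are distinct for $2 \le i \le a+1$.
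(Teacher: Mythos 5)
Your proposal takes a different route from the paper, and unfortunately it breaks at its central step. You quotient by $Z=E(n)$ and want to detect freeness over $B_i'(n)//E(n)\cong B_i\cap A(n-1)$ (degrees doubled) using Theorem \ref{steenrod alegbra   detect projective re}, asserting that $B_i\cap A(n-1)$ ``is not elementary for $i\geq 2$.'' That assertion is false: by Lemma \ref{class of elementary } (Palmieri's Example A.6), the algebras $B_j\cap A(n-1)$ are precisely the \emph{maximal elementary} Hopf subalgebras of $A(n-1)$ — each $B_i$ is bicommutative with all squares of augmentation-ideal elements vanishing, and any Hopf subalgebra of an elementary Hopf algebra is elementary. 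Consequently the only maximal elementary Hopf subalgebra of $B_i\cap A(n-1)$ is itself, Theorem \ref{steenrod alegbra   detect projective re} yields no detecting set of \emph{nontrivial proper} subalgebras, and the entire detection step on the quotient collapses. There is also a formal inconsistency downstream: in Theorem \ref{3.2 steenrod re} the set $\mathcal{E}$ consists of subalgebras $E$ with $E//Z$ a nontrivial detector of $H//Z$, so with $Z=E(n)$ the algebra $E(n)$ itself (whose quotient is $k$) cannot legitimately appear in $\mathcal{E}$; your identification $\mathcal{E}=\{E(n),D_i(n)\}$ cannot be produced by that theorem.

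The paper avoids this by choosing a much smaller normal subalgebra, $Z=\langle P^0_{n+1}\rangle$, and proving directly that $H=B_i'(n)//\langle P^0_{n+1}\rangle$ has the detect property with detecting set $\{E(n)//\langle P^0_{n+1}\rangle,\ D_i(n)//\langle P^0_{n+1}\rangle\}$. The detection is not obtained from Palmieri's theorem but from the degree-counting Lemma \ref{degree comparison}: one exhibits explicit nonzero elements $t_1=\prod_{j=1}^{i-1}\overline{P^0_j}$ and $t_2=\prod_{t=i}^{n}\bigl(\prod_{s=0}^{h_{B_i'(n)}(t)-1}\overline{P^s_t}\bigr)$ in the two detectors, verifies $\vert t_1\vert+\vert t_2\vert=\vert H\vert$ and the degree condition on $E_2^+$, and concludes. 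If you want to salvage your approach, you would need an independent argument that restriction $\mathbb{T}(B_i'(n))\to\mathbb{T}(E(n))\times\mathbb{T}(D_i(n))$ is injective without passing through a detect property of the elementary quotient; the paper's degree-comparison mechanism is exactly the replacement for that missing ingredient.
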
  
\begin{proof}
By Lemma \ref{s<t s=t}, $P^0_{n+1}$ commutes with all the $P^s_t\in B_i'(n)$, thus $<P^0_{n+1}>$ is a normal Hopf subalgebra and $B_i'(n)//<P^0_{n+1}>$ is a connected   Hopf algebra, denote it as $H$. Denote  $E(n)//<P^0_{n+1}>$ and $D_{i}(n)//<P^0_{n+1}>$ as $E_1$ and $E_2$ respectively, then $E_1$ and $E_2$ are connected   Hopf subalgebra of $H$. We will show that $H$ has detect property in $grH$ with detecting set $\{E_1,E_2\}$. 

As an algebra, $E_1$ is generated by $\mathcal{F}=\{\overline{P^0_{t}}\vert  1 \leq t\leq n\}$ , $E_2$ is  generated by $\mathcal{G}=\{\overline{P^{s}_{t}}\vert  i\leq t\leq n, 0\leq s\leq h_{D_i(n)}(t)\}$ and $H$ is generated by 
$\mathcal{F}\cup \mathcal{G}$.
 
 Consider the element  $t_1=\prod_{j=1}^{i-1}\overline{P^0_j}$ and $t_2=\prod_{t=i}^{n}\left(\prod_{s=0}^{h_{B_i'(n)}(t)-1}\overline{P^{s}_{t}}\right)$ where $t_1\in E_1$ and $t_2\in E_2$. By Corollary \ref{chengji feiling} and Lemma \ref{AB^+}, they are nonzero. Next, we show they satisfy the assumption of Lemma \ref{degree comparison}.

 Since $\vert t_1\vert =2^i-i$ and $\vert \overline{P^{0}_{i}}\vert =2^i-1$, by Lemma \ref{jisuan}, $\forall y\in E^+_2$, $\vert y\vert >\vert t_1\vert $.
 
 Notice that the Milnor basis of highest degree in $B_i'(n)$ is $Sq(r_1,r_2,\cdots, r_{n+1})$ with $r_t=2^{h_{B_i'(n)}(t)}-1$, one has 
 $$\vert B_i'(n)\vert =\sum_{t=1}^{n+1}\left(2^{h_{B_i'(n)}(t)}-1\right)\left(2^t-1\right).$$
 Since $\vert P^0_{n+1}\vert =2^{n+1}-1$ and $h_{B_i'(n)}(n+1)=1$, by Theorem \ref{graded lagrange rewrite},
 $$\vert H\vert =\vert B_i'(n)\vert -\vert <P^0_{n+1}>\vert =\sum_{t=1}^{n}\left(2^{h_{B_i'(n)}(t)}-1\right)\left(2^t-1\right).$$
 On the other hand 
 $$\vert t_1\vert +\vert t_2\vert =\sum_{t=1}^n\left(2^{h_{B_i'(n)}(t)}-1\right)\left(2^t-1\right),$$
 Therefore, we have $\vert H\vert =\vert t_1\vert +\vert t_2\vert $. By Lemma \ref{degree comparison}(1), $H$ has detect property in $grH$ with detecting set $\{E_1,E_2\}$. By Theorem \ref{3.2 steenrod re}, we are done.
\end{proof}

\begin{prop} \label{step 2}
	For each $2\leq i \leq a+1$, $n\geq 2$
	$$Res: \mathbb{T}(D_{i}(n))\longrightarrow \mathbb{T}(B_i\cap D_{i}(n))\times \mathbb{T}(B_{i+1}\cap D_{i}(n))$$ is injective.
\end{prop}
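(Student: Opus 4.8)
The plan is to argue as in Proposition \ref{step 1}: produce a nontrivial normal Hopf subalgebra $Z$ of $D_i(n)$ contained in both $B_i\cap D_i(n)$ and $B_{i+1}\cap D_i(n)$, and show that $H:=D_i(n)//Z$ has the detect property in $grH$ with detecting set $\{(B_i\cap D_i(n))//Z,\ (B_{i+1}\cap D_i(n))//Z\}$; Theorem \ref{3.2 steenrod re} then yields the statement.

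For $Z$ I would take $\langle P^0_{n+1}\rangle$. By Lemma \ref{s<t s=t}(1) the primitive $P^0_{n+1}$ commutes with every $P^s_t$ appearing in $B_i'(n)\supseteq D_i(n)$, so $Z$ is a nontrivial central, hence normal, Hopf subalgebra of $D_i(n)$; reading off profile functions at $t=n+1$ gives $h_{B_i\cap D_i(n)}(n+1)=h_{B_{i+1}\cap D_i(n)}(n+1)=1$, so $Z$ lies in both intersections and $E_1:=(B_i\cap D_i(n))//Z$, $E_2:=(B_{i+1}\cap D_i(n))//Z$ are connected Hopf subalgebras of $H$.

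The core is the detect-property statement, and the tool is Lemma \ref{degree comparison}. First I would unwind the profile functions: from $2i\le n+1$ one gets $h_{D_i(n)}(t)=\min\{i,\max(n+1-t,0)\}+1$ on $[i,n+1]$, that $B_i\cap D_i(n)$ is $D_i(n)$ with each profile value truncated at $i$, and that $B_{i+1}\cap D_i(n)$ is $D_i(n)$ with level $t=i$ deleted. Then I would feed Lemma \ref{degree comparison} the elements $t_1=$ (staircase product of the generators of $E_1$) and $t_2=$ (product of the remaining generators $\overline{P^i_t}$, $i+1\le t\le n+1-i$, of $E_2$); Corollary \ref{chengji feiling} and Lemma \ref{AB^+} keep these nonzero in $H$, and Lemma \ref{jisuan} together with $|P^s_t|=2^s(2^t-1)$ supplies the degree comparisons: for hypothesis (1) the equality $|t_1|+|t_2|=|H|=|D_i(n)|-(2^{n+1}-1)$ (Theorem \ref{graded lagrange rewrite}), and for hypothesis (2) that every nonzero homogeneous element of $E_2^+$ has degree $>|t_1|$ apart from its lowest generator, which plays the role of the exterior element $z$. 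A final application of Theorem \ref{3.2 steenrod re} to $H$, $Z$ and $\{E_1,E_2\}$ finishes the proof.

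The main obstacle will be the single generator $P^i_i$. Because $h_{D_i(n)}(i)=i+1$, the element $P^i_i$ lies in $D_i(n)$ but in neither $B_i\cap D_i(n)$ (truncated at $i$) nor $B_{i+1}\cap D_i(n)$ (level $i$ removed), so $E_1$ and $E_2$ fail to generate $H$ and the degree bookkeeping for hypothesis (1) does not close up on the nose. The way around it is to exploit that $(\overline{P^i_i})^2=\overline{P_i(2^i-1)P^0_{2i}}$ (Lemma \ref{s<t s=t}(3), Lemma \ref{fen jie}) already lies in the subalgebra generated by $E_1$ and $E_2$: so $\overline{P^i_i}$ is "exterior" over that subalgebra, which is precisely the situation the escape clause $y=z$, $z^2=0$, $0\ne zt_1\in E_1^+$ of hypothesis (2) is built for, and the reduction must be arranged so that this is the element $z$. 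One also has to isolate the degenerate range $i=a+1$: according to the parity of $n$ either $n+1=2i$, in which case $(\overline{P^i_i})^2=0$ in $H$ and moreover $B_{i+1}\cap D_i(n)\subseteq B_i\cap D_i(n)$, so the detecting set is effectively a singleton, or $n=2i$, where the index set defining $t_2$ reduces to a single value; both are handled by the same Lemma \ref{degree comparison} mechanism once $z$ is chosen correctly.
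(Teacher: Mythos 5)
Your skeleton --- pick a nontrivial normal Hopf subalgebra $Z$, verify the detect property of $D_i(n)//Z$ via Lemma \ref{degree comparison}, then quote Theorem \ref{3.2 steenrod re} --- is the paper's skeleton, and you have correctly located the obstruction at $P^i_i$. But your two concrete choices do not work. First, with $Z=\langle P^0_{n+1}\rangle$ and $E_1=(B_i\cap D_i(n))//Z$, the algebra $E_1$ is spread over all levels $i\le t\le n$, so the $t_1$ needed for hypothesis (1) of Lemma \ref{degree comparison} must have degree close to $\vert E_1\vert$; then hypothesis (2) fails not for one element but for many: already $\overline{P^0_{i+1}}\in E_2^+$ has degree $2^{i+1}-1$, far below $\vert t_1\vert$, and the lemma tolerates only a \emph{single} exceptional $z$. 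The lemma is only usable when the first algebra is concentrated in low degrees. Second, your candidate $z=\overline{P^i_i}$ is inadmissible on two counts: the exceptional element must lie in $E_2^+$, whereas $P^i_i$ lies in neither $B_i\cap D_i(n)$ nor $B_{i+1}\cap D_i(n)$; and the clause requires $z^2=0$ in $H$ literally, while $(\overline{P^i_i})^2=\overline{P_i(2^i-1)P^0_{2i}}\neq 0$ in $D_i(n)//\langle P^0_{n+1}\rangle$ except when $2i=n+1$. ``Exterior over the subalgebra generated by $E_1$ and $E_2$'' is not a hypothesis Lemma \ref{degree comparison} knows about, so invoking the escape clause this way would require proving a strengthened lemma --- and even then hypothesis (1) would still be off by $\vert P^i_i\vert=2^i(2^i-1)$.

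The paper resolves both difficulties with different choices. The normal subalgebra is not $\langle P^0_{n+1}\rangle$ but $Y_i(n)$, with profile $h_{Y_i(n)}(t)=2i+1-t$ on $i+1\le t\le 2i-1$; this is precisely what makes $\vert t_1\vert+\vert t_2\vert=\vert H\vert$ close up with a $t_1$ supported only at level $i$, and what leaves $\overline{P^0_{2i}}$ as the \emph{unique} element of $E_2^+$ of degree below $\vert t_1\vert$. And the missing generator $P^i_i$ is handled not through the $z$-clause but by enlarging the first detecting algebra: one passes from $(B_i\cap D_i(n))//Y_i(n)$ to $E_1=\pi X_i(n)$, where $X_i(n)=\langle P^0_i,\dots,P^i_i,P^0_{2i}\rangle$, using Theorem \ref{steenrod alegbra   detect projective re} and Lemma \ref{class of elementary } to upgrade freeness over $B_i\cap X_i(n)$ to freeness over $X_i(n)$. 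Then $t_1=\prod_{j=0}^{i}\overline{P^j_i}$ legitimately contains $\overline{P^i_i}$, and the exceptional element is $z=\overline{P^0_{2i}}\in E_1\cap E_2$, which genuinely satisfies $z^2=0$ and $0\neq zt_1\in E_1^+$. Without the auxiliary quotient $Y_i(n)$ and the enlargement via $X_i(n)$, the argument does not go through.
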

 
\begin{proof}
Consider $Y_i(n)\subset D_i(n)$ with profile function
	$$h_{Y_i(n)}(t)=\begin{cases}
		2i+1-t & i+1\leq t\leq 2i-1\\
		0 & otherwise
	\end{cases}.$$

Since $n\geq 2$ and $i\leq a+1$, $Y_i(n)\subset A(2i-1)$ is a Hopf subalgebra of $D_i(n)$ by Lemma \ref{lemma 1} and Lemma \ref{lemma 2}. 
Define $X_i(n)\subset D_i(n)$ by the function 
	$$h_{X_i(n)}(t)=\begin{cases}
	i+1 & t=i\\
	1 & t= 2i\\
	0 & otherwise
\end{cases}.$$
Directly by Theorem \ref{class of sub hopf alg}, $X_i(n)$ is a Hopf subalgebra of $D_i(n)$. 

By Lemma \ref{s<t s=t} (3) $[x,y]=0$, $\forall x\in D_i(n), y\in Y_i(n)$. Namely, $Y_i(n)$ is a normal Hopf subalgebra of $D_i(n)$, and $D_i(n)//Y_i(n)$ is a connected   Hopf algebra, denoted as $H$. Define $E_1'=\left(B_i\cap D_i(n)\right)//Y_i(n)$  and $E_2=\left(B_{i+1}\cap D_i(n)\right)//Y_i(n)$. We will show that $H$ has detect property in $grH$ with detecting sets $\{E_1',E_2\}$.

Suppose $M$ is a finitely generated   $H$-module, and $M\vert _{E_1'}$,  $M\vert _{E_2}$ are free. We want to show that $M$ is a free $H$-module.

By Lemma \ref{AB^+}, $X_i(n)\cap \left(Y_i(n)^+D_i(n)\right)=0$, namely $\pi\vert _{X_i(n)}$ is an isomorphism of graded algebras from $X_i(n)$ to $\pi X_i(n)$, where $\pi :D_i(n)\to H$ is the quotient map. By Lemma \ref{steenrod alegbra   detect projective re} and Lemma \ref{class of elementary }, $X_i(n)$ has detect property in $grX_i(n)$ with detecting set $\{B_i\cap X_i(n)\}$, hence $\pi X_i(n)$ has detect property in $gr\left(\pi X_i(n)\right)$ with detecting set $\{\pi \left(B_i\cap X_i(n)\right)\}$. Now that $M$ is a   free $E_1'$-module and $\pi (B_i\cap X_i(n))$ is a Hopf subalgebra of $E_1'$, it is free over $\pi (B_i\cap X_i(n))$ , and thus $M$ is a free   $\pi X_i(n)$-module. Now, denote $E_1=\pi X_i(n)$.

Consider the element  $t_1=\prod_{j=0}^{i}\overline{P^j_i}$ and $t_2=\prod_{t=i+1}^{n+1}\left(\prod_{s=h_{Y_i(n)}(t)}^{h_{D_i(n)}(t)-1}\overline{P^{s}_{t}}\right)$ where $t_1\in E_1$ and $t_2\in E_2$. By Corollary \ref{chengji feiling} and Lemma \ref{AB^+}, they are nonzero. Next, we show they satisfy the assumption (2) of Lemma \ref{degree comparison}.

Take $z=\overline{P^{0}_{2i}}$, we know $z^2=0$. By definition of the profile functions, $z\in E_1 \cap E_2$. Since $\vert t_1\vert =(2^{i+1}-1)(2^i-1)$ and $\vert \overline{P^{i}_{i+1}}\vert =(2^i-1)2^i$,  by Lemma \ref{jisuan}, $\forall y\in E^+_2$, $\vert y\vert >\vert t_1\vert $ as long as $y\neq z$. Moreover, $zt_1\in E_1$ because $t_1,z\in E_1$ and by Corollary \ref{chengji feiling} and \ref{AB^+}, $zt_1\neq 0$.

By definition, 
$$\vert t_1t_2\vert =\sum_{t=i}^{n+1}\left(2^{h_{D_i(n)}(t)}-2^{h_{Y_i(n)}(t)}\right)\left(2^t-1\right).$$
By the definition of profile  function, 
$$\vert D_i(n)\vert =\sum_{t=i}^{n+1}\left(2^{h_{D_i(n)}(t)}-1\right)\left(2^t-1\right).$$
and 
$$\vert Y_i(n)\vert =\sum_{t=i}^{n+1}\left(2^{h_{Y_i(n)}(t)}-1\right)\left(2^t-1\right).$$
Therefore, we have 
$$\vert H\vert =\vert D_i(n)\vert -\vert Y_i(n)\vert =\vert t_1\vert +\vert t_2\vert .$$
By Lemma \ref{degree comparison}, $M$ is a free $H$-module, namely, $H$ has detect property in $grH$ with detecting set $\{E_1',E_2\}$. By Theorem \ref{3.2 steenrod re}, we are done.
\end{proof}
\begin{prop}\label{step 3}
	For each $n\geq 2$
	$$Res:\mathbb{T}(D_{1}(n))\longrightarrow \mathbb{T}(E(n))\times \mathbb{T}(B_{2}\cap D_{1}(n))$$ is injective.
\end{prop}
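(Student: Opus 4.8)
The plan is to imitate the proof of Proposition \ref{step 2} with $i=1$. The snag is that the normal Hopf subalgebra used there, $Y_1(n)$, is trivial — its profile function $2i+1-t$ is supported on the empty range $2\le t\le 1$ — while Theorem \ref{3.2 steenrod re} requires a nontrivial one. I would take $Z=\langle P^0_{n+1}\rangle$ instead: by Lemma \ref{s<t s=t}(1) the primitive $P^0_{n+1}=P_{n+1}(1)$ commutes with every algebra generator $P^0_t,P^1_t$ $(1\le t\le n)$ of $D_1(n)$, so $Z$ is central, hence a nontrivial normal Hopf subalgebra, and $Z\subseteq E(n)$, $Z\subseteq B_2\cap D_1(n)$. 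By Theorem \ref{3.2 steenrod re} it is then enough to show that $H:=D_1(n)//Z$ has the detect property in $grH$ with detecting set $\{\,E(n)//Z,\ (B_2\cap D_1(n))//Z\,\}$.

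The only feature not already handled by Proposition \ref{step 2} is that $P^1_1\,(=Sq^2)$ lies in $D_1(n)$ but in neither $E(n)$ nor $B_2\cap D_1(n)$, so I reintroduce the auxiliary Hopf subalgebra $X_1(n)\subseteq D_1(n)$ (the algebra $X_i(n)$ of Proposition \ref{step 2} at $i=1$), with profile function $h(1)=2$, $h(2)=1$, $h(t)=0$ for $t\ge 3$. Let $\pi\colon D_1(n)\to H$ be the quotient map. The top degree of $X_1(n)$ is $|P^0_1|+|P^1_1|+|P^0_2|=6<2^{n+1}-1=|P^0_{n+1}|$ for $n\ge 2$, so $X_1(n)\cap P^0_{n+1}D_1(n)=0$ and $\pi$ restricts to an algebra isomorphism of $X_1(n)$ onto $\pi X_1(n)$. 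By Lemma \ref{s<t s=t}(3), $(P^1_1)^2=P_1(1)P^0_2=P^0_1P^0_2\ne 0$, so $X_1(n)$ is not elementary, and by Theorem \ref{steenrod alegbra   detect projective re} together with Lemma \ref{class of elementary } it has the detect property with detecting set $\{B_1\cap X_1(n)\}$, the exterior algebra on $P^0_1,P^0_2$. Since $B_1\cap X_1(n)\subseteq E(n)$, the argument of Proposition \ref{step 2} shows that an $H$-module free over $E(n)//Z$ is free over $\pi(B_1\cap X_1(n))$, hence over $\pi X_1(n)$.

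It then remains to apply Lemma \ref{degree comparison} for $H$ with $E_1=\pi X_1(n)$ and $E_2=(B_2\cap D_1(n))//Z$, taking
$$t_1=\overline{P^0_1P^1_1}\in E_1,\qquad t_2=\prod_{t=2}^{n}\overline{P^0_tP^1_t}\in E_2,\qquad z=\overline{P^0_2}\in E_1\cap E_2 .$$
Here $|t_1|=|P^0_1|+|P^1_1|=3$ and
$$|t_1|+|t_2|=3+\sum_{t=2}^{n}3(2^t-1)=\sum_{t=1}^{n}3(2^t-1)=|D_1(n)|-|P^0_{n+1}|=|H| .$$
By Corollary \ref{chengji feiling}, $\big(\prod_{t=1}^{n}P^0_tP^1_t\big)P^0_{n+1}$ is a nonzero element of top degree in $D_1(n)$; since $P^0_{n+1}$ is central with $(P^0_{n+1})^2=0$, this forces $\prod_{t=1}^{n}P^0_tP^1_t\notin P^0_{n+1}D_1(n)$, so $t_1t_2=\overline{\prod_{t=1}^{n}P^0_tP^1_t}\ne 0$ and in particular $t_1,t_2\ne 0$. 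For condition (2) of Lemma \ref{degree comparison}: every generator of $E_2$ has degree $3$ (only $\overline{P^0_2}=z$) or $\ge 6$, so every nonzero homogeneous $y\in E_2^+$ with $y\ne z$ has $|y|>|t_1|$; $z^2=0$ because $P^0_2$ is primitive; and $zt_1=\overline{P^0_2P^0_1P^1_1}$ lies in $E_1^+$ and is nonzero, being the $\pi$-image of the top class $P^0_1P^1_1P^0_2$ of $X_1(n)$ (Corollary \ref{chengji feiling}) under the injective map $\pi|_{X_1(n)}$. Lemma \ref{degree comparison} now gives that an $H$-module free over $\pi X_1(n)$ and over $E_2$ is free over $H$; together with the previous paragraph this shows $H$ has the claimed detect property, and Theorem \ref{3.2 steenrod re} yields the injectivity of $Res$.

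The main obstacle is the bookkeeping forced by $P^1_1$: supplying a nontrivial normal Hopf subalgebra in place of the now-trivial $Y_1(n)$, and checking condition (2) of Lemma \ref{degree comparison}, where the coincidence $|z|=|t_1|$ makes it essential that $z=\overline{P^0_2}$ is the \emph{only} exceptional element and that $zt_1\ne 0$ in $E_1^+$.
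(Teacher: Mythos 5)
Your proposal is correct and takes essentially the same route as the paper: the paper's proof also quotients by the central subalgebra $\langle P^0_{n+1}\rangle$, upgrades freeness over $E(n)//Z$ to freeness over the image of $A(1)$ (your $X_1(n)$ is literally $A(1)$, since both have profile function $h(1)=2$, $h(2)=1$, $h(t)=0$ for $t\geq 3$), and then applies Lemma \ref{degree comparison} with exactly the same choices $t_1=\overline{P^0_1P^1_1}$, $t_2=\prod_{t=2}^{n}\overline{P^0_tP^1_t}$ and $z=\overline{P^0_2}$. Your diagnosis that $Y_1(n)$ degenerates at $i=1$, forcing the substitute normal subalgebra, matches the paper's handling of this case.
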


\begin{proof}
By Lemma \ref{s<t s=t}, $P^0_{n+1}$ commutes with all the $P^s_t\in D_{1}(n)$, thus $<P^0_{n+1}>$ is a normal Hopf subalgebra and $D_{1}(n)//<P^0_{n+1}>$ is a connected   Hopf algebra, denote it as $H$. Moreover, denote  $E(n)//<P^0_{n+1}>$ and $B_2\cap D_{1}(n)//<P^0_{n+1}>$ as $E_1'$ and $E_2$ respectively, then $E_1'$ and $E_2$ are connected   Hopf subalgebra of $H$, and we will show that $H$ has detect property in $grH$ with detecting set $\{E_1',E_2\}$.

Suppose $M$ is a finitely generated   $H$-module, and $M\vert _{E'_1}$, $M\vert _{E_2}$ are free. We want to show that $M$ is a free $H$-module.

By assumption $n\geq2$, thus $\vert A(1)\vert =6<\vert P^0_{n+1}\vert =2^{n+1}-1$. Therefore $A(1)\cap <P^0_{n+1}>^+D_1(n)=0$, which implies $\pi \vert _{A(1)}$ is an isomorphism of graded algebras from $A(1)$ to $\pi A(1)$, where $\pi :D_{1}(n)\to D_{1}(n)//<P^0_{n+1}>$ is the quotient map. That is, $\pi A(1) $ has detect property in $gr(\pi A(1)) $ with detecting set $\{\pi E(1)\}$. Since $M$ is free on $E_1'=\pi E(n)$ and $\pi E(1)$ is a normal Hopf subalgebra of $\pi E(n)$, $M$ is a  free    $\pi E(1)$-module by Theorem \ref{graded lagrange rewrite}. It follows that $M$ is a free  graded $\pi A(1)$-module. Now we denote $E_1=\pi A(1)$.

Consider the element  $t_1=\overline{P^0_1P^1_1}\in E_1$ and $t_2=\prod_{t=2}^{n}\overline{P^0_tP^1_t}\in E_2$. By Corollary \ref{chengji feiling} and Lemma \ref{AB^+}, they are nonzero. Next, we show they satisfy the assumption (2) of Lemma \ref{degree comparison}.

 Take $z=\overline{P^{0}_{2}}\in E_1\cap E_2$ we know $z^2=0$. Since $\vert t_1\vert =3$ and $\vert \overline{P^{1}_{2}}\vert =6$, by Lemma \ref{jisuan}, $\forall y\in E^+_2$, $\vert y\vert >\vert t_1\vert $ as long as $y\neq z$. Moreover, by Corollary \ref{chengji feiling} and \ref{AB^+}, $zt_1\neq 0$.

By definition, 
$$\vert t_1t_2\vert =\sum_{t=1}^n3\left(2^t-1\right).$$
Notice that the Milnor basis of highest degree in $D_1(n)$ is that $Sq(r_1,r_2,\cdots r_{n+1})$ with $r_t=3$ for $t\leq n$, and $r_{n+1}=1$, one has 
$$\vert D_1(n)\vert =2^{n+1}-1+\sum_{t=1}^{n}3\left(2^t-1\right).$$
Since $\vert P^0_{n+1}\vert =2^{n+1}-1$, by Theorem \ref{graded lagrange rewrite},
$$\vert H\vert =\vert D_1(n)\vert -\vert <P^0_{n+1}>\vert =\sum_{t=1}^n3\left(2^t-1\right).$$
Therefore, we have $\vert H\vert =\vert t_1\vert +\vert t_2\vert .$
By Lemma \ref{degree comparison}, $M$ is a free $H$-module, namely, $H$ has detect property in $grH$ with detecting set $\{E_1',E_2\}$. By Theorem \ref{3.2 steenrod re}, we are done.

\end{proof}

\begin{thm}
	Given $n\geq 2$, the morphism of groups
$$f:\mathbb{Z}\oplus \mathbb{Z}\to \mathbb{T}(A(n))$$
which sends $(m,l)$ to  $[\sigma(m)\Omega_{A(n)}^l(\mathbb{Z}/2)]$ is an isomorphism of groups.
\end{thm}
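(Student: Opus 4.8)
The plan is to assemble the reductions of this section into a single injection and play it against the known structure of the endotrivial groups of elementary Hopf algebras. Composing the restriction maps of Propositions~\ref{step 0}, \ref{step 1}, \ref{step 2} and \ref{step 3} produces an injective homomorphism
\[
\mathrm{Res}\colon \mathbb{T}(A(n))\hookrightarrow \prod_{E\in\mathcal{E}}\mathbb{T}(E),
\]
where $\mathcal{E}$ consists of $E(n)$ together with $B_i\cap D_i(n)$ and $B_{i+1}\cap D_i(n)$ for $2\le i\le a+1$ and $B_2\cap D_1(n)$, and where each component is restriction along the inclusion of the corresponding Hopf subalgebra of $A(n)$. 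Since $A(n)$ is free over each such $E$ (Theorem~\ref{graded lagrange rewrite}), restriction commutes with the shift and with $\Omega$, so that $\mathrm{Res}^{A(n)}_{E}\circ f$ is exactly the analogous map $f_E\colon\mathbb{Z}\oplus\mathbb{Z}\to\mathbb{T}(E)$. Now $E(n)$ is elementary on $P^0_1,\dots,P^0_{n+1}$, of which there are $n+1\ge 2$, so $f_{E(n)}=\mathrm{Res}^{A(n)}_{E(n)}\circ f$ is an isomorphism by Proposition~\ref{end of induction}; as $E(n)\in\mathcal{E}$, this already forces $f$ to be injective.

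For surjectivity, fix $[M]\in\mathbb{T}(A(n))$. By Proposition~\ref{end of induction} applied to $E(n)$ there is a unique pair $(m,l)$ with $[M|_{E(n)}]=f_{E(n)}(m,l)$; set $[M']:=[M]-f(m,l)\in\mathbb{T}(A(n))$, so that $[M'|_{E(n)}]=0$. It suffices to show $[M']=0$, and by injectivity of $\mathrm{Res}$ it is enough to prove $[M'|_E]=0$ for every $E\in\mathcal{E}$. This is immediate for $E=E(n)$. For each of the other members $E$ of $\mathcal{E}$, inspection of the profile functions shows that $E$ is an elementary Hopf subalgebra of $A(n)$ with at least two generators and that $E\cap E(n)$, which is the exterior algebra on the $P^0_t$ contained in $E$, likewise has at least two generators whenever $n\ge 2$ (one counts $n-i+2$, $n-i+1$, or $n$ of them in the respective cases, each at least $2$ for $i$ in the allowed range). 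Hence $[M'|_E]=f_E(m',l')$ for some $(m',l')$ by Proposition~\ref{end of induction}, and restricting further to $E\cap E(n)$, where $[M'|_{E\cap E(n)}]=0$ because $M'|_{E(n)}$ is stably trivial, gives $f_{E\cap E(n)}(m',l')=0$; applying Proposition~\ref{end of induction} to $E\cap E(n)$ forces $(m',l')=(0,0)$. Therefore $[M'|_E]=0$ for all $E\in\mathcal{E}$, so $[M']=0$ and $[M]=f(m,l)$, which proves surjectivity.

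I expect the only genuine work to be the combinatorial bookkeeping of the second paragraph: checking, from the explicit profile functions of $E(n)$, the $D_i(n)$, the $B'_i(n)$ and the $B_i$, that every elementary subalgebra thrown up by the reduction chain of this section meets $E(n)$ in an exterior algebra on at least two of the primitives $P^0_t$, so that the $(\sigma,\Omega)$-coordinates read off from that subalgebra are pinned down to agree with those read off from $E(n)$. This is routine once the profile functions are written out, and with it in hand the theorem is a formal consequence of Propositions~\ref{step 0}--\ref{step 3} together with Proposition~\ref{end of induction}; all the substantive analysis has already been absorbed into the detect-property statements of this section.
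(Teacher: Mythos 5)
Your argument is correct and takes essentially the same route as the paper: compose the injections of Propositions \ref{step 0}--\ref{step 3} into a product of $\mathbb{T}(E)$ for elementary $E$, then use Proposition \ref{end of induction} to identify every factor with $\mathbb{Z}\oplus\mathbb{Z}$ compatibly with $f$. The only difference is organizational --- the paper pins down all coordinates simultaneously by restricting every factor to one common exterior subalgebra $O_{a+1}$, turning the composite into a single injection $\mathbb{T}(A(n))\to\mathbb{T}(O_{a+1})$ whose image already contains that of $f$, whereas you anchor at $E(n)$ and compare each factor through the intersection $E\cap E(n)$; both come down to the same count of at least two generators $P^0_t$.
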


\begin{proof}
By Proposition \ref{step 0} \ref{step 1} \ref{step 2} \ref{step 3}, one gets $$Res: \mathbb{T}(A(n))\longrightarrow \prod_L \mathbb{T}(L) $$
is an injective group homomorphism, where each $L$ is elementary and contains $O_{a+1}$. By Proposition \ref{end of induction}, $Res^{L}_{O_{a+1}}: \mathbb{T}(L)\to \mathbb{T}(O_{a+1})$ is an isomorphism. Compose $Res$ with $Res^{L}_{O_{a+1}}$ on each component of $\prod_L\mathbb{T}(L)$, one gets an injection
$$Res: \mathbb{T}(A(n))\longrightarrow  \mathbb{T}(O_{a+1}). $$
Recall that $\sigma(m)\Omega_{A(n)}^l(\mathbb{Z}/2)\in \mathbb{T}(A(n))$ and it maps to $\sigma(m)\Omega_{O_{a+1}}^l(\mathbb{Z}/2)$ under the restriction $Res^{A(n)}_{O_{a+1}}$, the above injection is in fact an isomorphism.
\end{proof}

Recall that the group of endotrivial modules are the same as the stable Picard group, thus one has the following Corollary:
\begin{cor}\label{main result}
	Given $n\geq 2$, the morphism of groups
$$f:\mathbb{Z}\oplus \mathbb{Z}\to Pic(A(n))$$
which sends $(m,l)$ to  $[\sigma(m)\Omega_{A(n)}^l(\mathbb{Z}/2)]$ is an isomorphism of groups.
\end{cor}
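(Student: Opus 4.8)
The plan is to stitch together the four injective restriction maps of Propositions \ref{step 0}, \ref{step 1}, \ref{step 2} and \ref{step 3}, land inside one sufficiently small elementary Hopf subalgebra, and conclude with Proposition \ref{end of induction}. Note first that $f$ is a group homomorphism: $\sigma$ is additive under $\otimes$, one has that $\Omega_{A(n)}^{l}(\mathbb{Z}/2)\otimes\Omega_{A(n)}^{l'}(\mathbb{Z}/2)$ is stably isomorphic to $\Omega_{A(n)}^{l+l'}(\mathbb{Z}/2)$ (directly from $\Omega_{A(n)}M=\ker(\epsilon)\otimes M$), and $[M]+[N]=[M\otimes N]$ in $\mathbb{T}(A(n))$.

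First I would assemble the reduction tree. Proposition \ref{step 0} embeds $\mathbb{T}(A(n))$ into $\prod_{i=1}^{a+1}\mathbb{T}(B_i'(n))$. For $i=1$ one has $B_1'(n)=D_1(n)$, and Proposition \ref{step 3} embeds $\mathbb{T}(D_1(n))$ into $\mathbb{T}(E(n))\times\mathbb{T}(B_2\cap D_1(n))$; for $2\le i\le a+1$, Proposition \ref{step 1} embeds $\mathbb{T}(B_i'(n))$ into $\mathbb{T}(E(n))\times\mathbb{T}(D_i(n))$, and then Proposition \ref{step 2} embeds $\mathbb{T}(D_i(n))$ into $\mathbb{T}(B_i\cap D_i(n))\times\mathbb{T}(B_{i+1}\cap D_i(n))$. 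Composing these, one obtains an injective group homomorphism $Res\colon\mathbb{T}(A(n))\hookrightarrow\prod_{L}\mathbb{T}(L)$, where $L$ ranges over a finite family of elementary Hopf subalgebras of $A(n)$. The crucial bookkeeping point is that a single elementary Hopf subalgebra $O$, generated by Milnor primitives $P^0_t$ and having at least two generators, is contained in every leaf $L$; this is forced by the profile-function computations behind Propositions \ref{step 1}--\ref{step 3}, and it automatically makes every $L$ have at least two generators as well.

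Next I would collapse the product. By Proposition \ref{end of induction}, for every leaf $L$ the map $f_L\colon(m,l)\mapsto[\sigma(m)\Omega_{L}^{l}(\mathbb{Z}/2)]$ is an isomorphism $\mathbb{Z}\oplus\mathbb{Z}\xrightarrow{\sim}\mathbb{T}(L)$, and likewise $f_O\colon\mathbb{Z}\oplus\mathbb{Z}\xrightarrow{\sim}\mathbb{T}(O)$. Restriction along $O\subseteq L$ matches these identifications: it commutes with $\sigma$, and by Theorem \ref{graded lagrange rewrite} the short exact sequence $0\to\ker(\epsilon_L)\vert_{O}\to L\vert_{O}\to\mathbb{Z}/2\to0$ has $L\vert_{O}$ free, so $\Omega_{L}(\mathbb{Z}/2)$ restricts to a module equivalent to $\Omega_{O}(\mathbb{Z}/2)$, and inductively $\Omega_L^{l}(\mathbb{Z}/2)\vert_{O}$ is equivalent to $\Omega_O^{l}(\mathbb{Z}/2)$ for all $l$ (using also that restriction commutes with $k$-duals). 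Hence each $Res^{L}_{O}\colon\mathbb{T}(L)\to\mathbb{T}(O)$ is an isomorphism. By transitivity of restriction, the composite $\left(\prod_L Res^L_O\right)\circ Res$ has every coordinate equal to $Res^{A(n)}_{O}$; it is injective, being an injection followed by an isomorphism, and since the diagonal $\mathbb{T}(O)\hookrightarrow\prod_L\mathbb{T}(O)$ is injective, it follows that $Res^{A(n)}_{O}\colon\mathbb{T}(A(n))\to\mathbb{T}(O)$ is injective.

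Finally, the same compatibility gives $Res^{A(n)}_{O}\circ f=f_O$, which is an isomorphism, so $\mathbb{T}(O)=\mathrm{im}(f_O)\subseteq\mathrm{im}(Res^{A(n)}_{O})\subseteq\mathbb{T}(O)$; thus $Res^{A(n)}_{O}$ is surjective too, hence an isomorphism, and $f=\left(Res^{A(n)}_{O}\right)^{-1}\circ f_O$ is an isomorphism. Corollary \ref{main result} then follows from the identification $\mathbb{T}(A(n))\cong Pic(A(n))$ recalled in the introduction. Given Propositions \ref{step 0}--\ref{step 3}, everything above is a formal diagram chase; I expect the one genuinely delicate part to be the combinatorial verification that a common elementary $O$ on at least two generators sits inside every leaf of the reduction tree, together with the routine check that restriction commutes with the operators $\sigma$ and $\Omega$.
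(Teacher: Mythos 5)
Your proposal is correct and follows essentially the same route as the paper's own proof: compose the injections of Propositions \ref{step 0}--\ref{step 3} to land in a product of elementary leaves, collapse to a single common elementary subalgebra (the paper takes $O_{a+1}$, generated by Milnor primitives $P^0_t$) using Proposition \ref{end of induction} and the compatibility of restriction with $\sigma$ and $\Omega$, and then observe that the proposed generators of $\mathbb{T}(A(n))$ restrict to the generators of $\mathbb{T}(O_{a+1})$, forcing the injection to be an isomorphism. The one step you flag as delicate --- that a common elementary $O$ on at least two generators sits inside every leaf --- is likewise only asserted, not verified, in the paper.
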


\backmatter

\bibliography{sn-bibliography}% common bib file

%% BioMed_Central_Bib_Style_v1.01

\begin{thebibliography}{10}
% BibTex style file: bmc-mathphys.bst (version 2.1), 2014-07-24
\ifx \bisbn   \undefined \def \bisbn  #1{ISBN #1}\fi
\ifx \binits  \undefined \def \binits#1{#1}\fi
\ifx \bauthor  \undefined \def \bauthor#1{#1}\fi
\ifx \batitle  \undefined \def \batitle#1{#1}\fi
\ifx \bjtitle  \undefined \def \bjtitle#1{#1}\fi
\ifx \bvolume  \undefined \def \bvolume#1{\textbf{#1}}\fi
\ifx \byear  \undefined \def \byear#1{#1}\fi
\ifx \bissue  \undefined \def \bissue#1{#1}\fi
\ifx \bfpage  \undefined \def \bfpage#1{#1}\fi
\ifx \blpage  \undefined \def \blpage #1{#1}\fi
\ifx \burl  \undefined \def \burl#1{\textsf{#1}}\fi
\ifx \doiurl  \undefined \def \doiurl#1{\url{https://doi.org/#1}}\fi
\ifx \betal  \undefined \def \betal{\textit{et al.}}\fi
\ifx \binstitute  \undefined \def \binstitute#1{#1}\fi
\ifx \binstitutionaled  \undefined \def \binstitutionaled#1{#1}\fi
\ifx \bctitle  \undefined \def \bctitle#1{#1}\fi
\ifx \beditor  \undefined \def \beditor#1{#1}\fi
\ifx \bpublisher  \undefined \def \bpublisher#1{#1}\fi
\ifx \bbtitle  \undefined \def \bbtitle#1{#1}\fi
\ifx \bedition  \undefined \def \bedition#1{#1}\fi
\ifx \bseriesno  \undefined \def \bseriesno#1{#1}\fi
\ifx \blocation  \undefined \def \blocation#1{#1}\fi
\ifx \bsertitle  \undefined \def \bsertitle#1{#1}\fi
\ifx \bsnm \undefined \def \bsnm#1{#1}\fi
\ifx \bsuffix \undefined \def \bsuffix#1{#1}\fi
\ifx \bparticle \undefined \def \bparticle#1{#1}\fi
\ifx \barticle \undefined \def \barticle#1{#1}\fi
\bibcommenthead
\ifx \bconfdate \undefined \def \bconfdate #1{#1}\fi
\ifx \botherref \undefined \def \botherref #1{#1}\fi
\ifx \url \undefined \def \url#1{\textsf{#1}}\fi
\ifx \bchapter \undefined \def \bchapter#1{#1}\fi
\ifx \bbook \undefined \def \bbook#1{#1}\fi
\ifx \bcomment \undefined \def \bcomment#1{#1}\fi
\ifx \oauthor \undefined \def \oauthor#1{#1}\fi
\ifx \citeauthoryear \undefined \def \citeauthoryear#1{#1}\fi
\ifx \endbibitem  \undefined \def \endbibitem {}\fi
\ifx \bconflocation  \undefined \def \bconflocation#1{#1}\fi
\ifx \arxivurl  \undefined \def \arxivurl#1{\textsf{#1}}\fi
\csname PreBibitemsHook\endcsname

%%% 1
\bibitem[\protect\citeauthoryear{Adams and Priddy}{1976}]{BSO}
\begin{bchapter}
\bauthor{\bsnm{Adams}, \binits{J.F.}},
\bauthor{\bsnm{Priddy}, \binits{S.}}:
\bctitle{Uniqueness of $bso$}.
In: \bbtitle{Mathematical Proceedings of the Cambridge Philosophical Society},
vol. \bseriesno{80},
pp. \bfpage{475}--\blpage{509}
(\byear{1976}).
\bcomment{Cambridge University Press}
\end{bchapter}
\endbibitem

%%% 2
\bibitem[\protect\citeauthoryear{Adams and Margolis}{1992}]{adams1992modules}
\begin{barticle}
\bauthor{\bsnm{Adams}, \binits{J.}},
\bauthor{\bsnm{Margolis}, \binits{H.}}:
\batitle{Modules over the steenrod algebra}.
\bjtitle{The Selected Works of J. Frank Adams: Volume 2}
\bvolume{2},
\bfpage{106}
(\byear{1992})
\end{barticle}
\endbibitem

%%% 3
\bibitem[\protect\citeauthoryear{Aguiar and Lauve}{2013}]{aguiar2013lagrange}
\begin{barticle}
\bauthor{\bsnm{Aguiar}, \binits{M.}},
\bauthor{\bsnm{Lauve}, \binits{A.}}:
\batitle{Lagrange's theorem for hopf monoids in species}.
\bjtitle{Canadian Journal of Mathematics}
\bvolume{65}(\bissue{2}),
\bfpage{241}--\blpage{265}
(\byear{2013})
\end{barticle}
\endbibitem

%%% 4
\bibitem[\protect\citeauthoryear{Bhattacharya and
  Ricka}{2017}]{bhattacharya2017stable}
\begin{botherref}
\oauthor{\bsnm{Bhattacharya}, \binits{P.}},
\oauthor{\bsnm{Ricka}, \binits{N.}}:
The stable Picard group of $A (2) $.
Preprint at \url{https://arxiv.org/abs/1702.01493}
(2017)
\end{botherref}
\endbibitem

%%% 5
\bibitem[\protect\citeauthoryear{Carlson and
  Th{\'e}venaz}{2000}]{carlson2000torsion}
\begin{barticle}
\bauthor{\bsnm{Carlson}, \binits{J.F.}},
\bauthor{\bsnm{Th{\'e}venaz}, \binits{J.}}:
\batitle{Torsion endo-trivial modules}.
\bjtitle{Algebras and representation Theory}
\bvolume{3}(\bissue{4}),
\bfpage{303}--\blpage{335}
(\byear{2000})
\end{barticle}
\endbibitem

%%% 6
\bibitem[\protect\citeauthoryear{Gheorghe et~al.}{2018}]{gheorghe2018picard}
\begin{barticle}
\bauthor{\bsnm{Gheorghe}, \binits{B.}},
\bauthor{\bsnm{Isaksen}, \binits{D.C.}},
\bauthor{\bsnm{Ricka}, \binits{N.}}:
\batitle{The picard group of motivic $ \mathcal {A}_\mathbb {C}(1) $}.
\bjtitle{Journal of Homotopy and Related Structures}
\bvolume{13}(\bissue{4}),
\bfpage{847}--\blpage{865}
(\byear{2018})
\end{barticle}
\endbibitem

%%% 7
\bibitem[\protect\citeauthoryear{Hovey et~al.}{1997}]{hovey1997axiomatic}
\begin{bbook}
\bauthor{\bsnm{Hovey}, \binits{M.}},
\bauthor{\bsnm{Palmieri}, \binits{J.H.}},
\bauthor{\bsnm{Strickland}, \binits{N.P.}}:
\bbtitle{Axiomatic Stable Homotopy Theory}
vol. \bseriesno{610}.
\bpublisher{American Mathematical Soc.},
\blocation{Washington, DC}
(\byear{1997})
\end{bbook}
\endbibitem

%%% 8
\bibitem[\protect\citeauthoryear{Margolis}{1983}]{margolis2011spectra}
\begin{bbook}
\bauthor{\bsnm{Margolis}, \binits{H.R.}}:
\bbtitle{Spectra and the {S}teenrod Algebra}.
\bsertitle{North-Holland Mathematical Library},
vol. \bseriesno{29}.
\bpublisher{North-Holland Publishing Co.},
\blocation{Amsterdam}
(\byear{1983}).
\bcomment{Modules over the Steenrod algebra and the stable homotopy category}
\end{bbook}
\endbibitem

%%% 9
\bibitem[\protect\citeauthoryear{Montgomery}{1993}]{montgomery1993hopf}
\begin{bbook}
\bauthor{\bsnm{Montgomery}, \binits{S.}}:
\bbtitle{Hopf Algebras and Their Actions on Rings}.
\bsertitle{CBMS Regional Conference Series in Mathematics},
vol. \bseriesno{82}.
\bpublisher{American Mathematical Society},
\blocation{Washington, DC}
(\byear{1993})
\end{bbook}
\endbibitem

%%% 10
\bibitem[\protect\citeauthoryear{Palmieri}{1997}]{palmieri1997note}
\begin{barticle}
\bauthor{\bsnm{Palmieri}, \binits{J.H.}}:
\batitle{A note on the cohomology of finite-dimensional cocommutative hopf
  algebras}.
\bjtitle{Journal of Algebra}
\bvolume{188}(\bissue{1}),
\bfpage{203}--\blpage{215}
(\byear{1997})
\end{barticle}
\endbibitem

\end{thebibliography}

%% if required, the content of .bbl file can be included here once bbl is generated
%%\input sn-article.bbl
%% Default %%
%%\input sn-sample-bib.tex%%
\end{document}